\numberwithin{equation}{section}
\numberwithin{table}{section}
\numberwithin{figure}{section}
\newtheorem{theorem}{Theorem}[section]
\newtheorem{corollary}[theorem]{Corollary}\newtheorem{proposition}[theorem]{Proposition}
\newtheorem{lemma}[theorem]{Lemma}
\theoremstyle{definition}
\newtheorem{definition}[theorem]{Definition}
\newtheorem{remark}[theorem]{Remark}
\newtheorem{example}[theorem]{Example}
\renewcommand{\Re}{\mathop{\textrm{\upshape{Re}}}}
\renewcommand{\phi}{\varphi}
\newcommand{\eps}{\varepsilon}
\newcommand{\forget}[1]{}
\newcommand{\spk}[1]{\langle #1 \rangle}
\newcommand{\R}{\mathbb{R}}
\newcommand{\rz}{\mathbb{R}}
\newcommand{\C}{\mathbb{C}}
\newcommand{\N}{\mathbb{N}}
\renewcommand{\epsilon}{\varepsilon}
\newcommand{\op}{\mathop{\text{\upshape{op}}}\nolimits}
\newcommand{\wt}[1]{\widetilde{#1}}
\newcommand{\dbar}{d\hspace*{-0.08em}\bar{}\hspace*{0.1em}}
\newcommand{\trinorm}[1]{|\hspace*{-1pt}|\hspace*{-1pt}|#1|\hspace*{-1pt}|\hspace*{-1pt}|}
\newcommand{\cz}{{\mathbb C}}
\newcommand{\nz}{{\mathbb N}}
\newcommand{\scrC}{\mathscr{C}}
\newcommand{\scrF}{\mathscr{F}}
\newcommand{\scrK}{\mathscr{K}}
\newcommand{\scrL}{\mathscr{L}}
\newcommand{\scrS}{\mathscr{S}}
\newcommand{\calF}{\mathcal{F}}
\newcommand{\calH}{\mathcal{H}}
\newcommand{\lra}{\longrightarrow}
\newcommand{\wh}{\widehat}
\begin{document}
\title{$R$-boundedness of Poisson operators}

\author{Robert Denk}
\address{Robert Denk, Universit\"at Konstanz, Fachbereich f\"ur Mathematik und Statistik, Konstanz, Germany}
\email{robert.denk@uni-konstanz.de}

\author{Nick Lindemulder}
\address{Nick Lindemulder}
\email{nick.lindemulder@gmail.com}

\author{J{\"o}rg Seiler}
\address{J\"org Seiler, Università di Torino, Dipartimento di Matematica, V. Carlo Alberto 10, 10123 Torino, Italy}
\email{joerg.seiler@unito.it}

\begin{abstract}
We investigate the $R$-boundedness of parameter-dependent families of Poisson operators on the half-space $\R^n_+$ in various scales of function spaces. Applications concern maximal $L_q$-regularity for boundary value problems with dynamic boundary conditions.
\end{abstract}

\keywords{Boundary value problem, anisotropic Sobolev space, generalized trace, dynamic boundary condition, holomorphic semigroup}
\subjclass[2020]{35J40 (primary); 46E35, 47D06, 35K35 (secondary)}

\date{\today}

\maketitle

\setcounter{tocdepth}{3}
\tableofcontents

\section{Introduction}

Poisson operators arise as solution operators to boundary value problems with inhomogeneous boundary conditions. A simple example is the parameter-dependent heat equation in the half-space $\R^n_+:=\{x\in\R^n: x_n>0\}$,
\begin{equation}
    \label{0-1}
    \begin{alignedat}{4}
        \mu^2 u -\Delta u & = f &\quad& \text{ in }\R^n_+,\\
        \gamma_0 u & = g &&\text{ on }\R^{n-1},
    \end{alignedat}
\end{equation}
where $\gamma_0 u= u|_{\R^{n-1}}$ stands for the boundary trace. If $\mu$ is complex with positive real part (and $f$ and $g$ are suitable) this equation is uniquely solvable and the solution can be written in the form $u=R_\mu f+K_\mu g$. The operator $R_\mu$ is the resolvent of the Dirichlet-Laplacian, $K_\mu$ is an example of a parameter-dependent Poisson operator.

In the classical $L^p$-setting, the most natural solution space for the first equation in \eqref{0-1}
is $H_p^2(\R^n_+)$, the classical second-order Sobolev (or Bessel potential) space. To obtain a solution in this space, one needs to impose that the boundary datum $g$ belongs to $B_{pp}^{2-1/p}(\R^{n-1})$, where $B_{pp}^s$ stands for the Besov space or order $s$. However, there are several applications where the boundary data space has less regularity. For instance, this is the case for stochastic PDEs with boundary noise, where $g$ stands for the derivative of a Brownian motion on $\R^{n-1}$, which is known to belong to some Besov space of negative regularity. Another example is given by the class of boundary value problems with dynamical boundary conditions, which are typically understood as Cauchy problems in the product space $L^p(\R^n_+)\times L^p(\R^{n-1})$.

For considering boundary data of low regularity in \eqref{0-1}, one has to take care of the well-posedness of the boundary trace $\gamma_0$. This trace is continuous as a map from $H_p^s(\R^n)$ to $B_{pp}^{s-1/p}(\R^{n-1})$ if and only if $s>1/p$ (see, e.g., \cite{Johnsen-Sickel08}). However, if we modify the first space to $H_{p,\Delta}^s(\R^n_+) := \{ u\in H_p^s(\R^n_+)\mid \Delta u\in L^p(\R^n_+)\}$, the boundary trace $\gamma_0\colon H_{p,\Delta}^s(\R^n_+)\to B_{pp}^{s-1/p}(\R^{n-1})$ is continuous for all $s\in\R$, so the formulation in \eqref{0-1} makes sense. Surprisingly, the solution operator $K_\mu\colon B_{pp}^{s-1/p}\to H_p^s(\R^n_+)$ is continuous for all $s\in\R$, as was shown by Grubb and Kokholm in \cite{Grubb-Kokholm93}. In the same paper, uniform estimates of $R_\mu$ and $K_\mu$ in parameter-dependent norms were established. However, these estimates are not sufficient for the study of time-dependent problems like
\begin{equation}
    \label{0-2}
    \begin{alignedat}{4}
        \partial_t u (t) -\Delta u (t) & = f(t) &\quad& \text{ in }(0,\infty)\times \R^n_+,\\
        \gamma_0 u(t) & = g (t)  &&\text{ on }(0,\infty)\times \R^{n-1},
    \end{alignedat}
\end{equation}
in the setting of \emph{maximal $L^q$-regularity}, where for $f\in L^q((0,\infty); H^s_p(\R^{n}_+))$ and $g\in L^q((0,\infty); B_{pp}^{s-1/p}(\R^{n-1}))$, we seek a solution in the canonical solution space
\[ u\in L^q((0,\infty); H_p^s(\R^n_+))\cap H_q^1((0,\infty); H_p^{s+2}(\R^n_+)). \]
In this context, a stronger property -- called \emph{$R$-boundedness} -- of the families $R_\mu$ and  $K_\mu$ is needed. In fact, it is known since long that $R$-boundedness is equivalent to maximal $L^q$-regularity for all $q\in (1,+\infty)$ in the sense of well-posedness in $L^q$-$L^p$-Sobolev spaces in time and space. For more information on maximal regularity theory and related subjects we refer the reader to the recent series of text books of Hyt\"onen, van Neerven, Veraar and Weis \cite{Hytoenen-vanNeerven-Veraar-Weis16,Hytoenen-vanNeerven-Veraar-Weis17,Hytoenen-vanNeerven-Veraar-Weis23} and the references therein.

The $R$-boundedness of resolvents like $R_\mu$ above is well-known and can be obtained by general results on normally elliptic boundary value problems. Generally, such resolvents can be decomposed in the sum of a parameter-dependent pseudodifferential operator and a so-called parameter-dependent singular Green operator (the resolvent construction can be performed for example in the framework of Boutet de Monvel's algebra, a pseudodifferential calculus for boundary value problems, introduced in \cite{Boutet71} and then developed further over many decades, see \cite{Grubb-Kokholm93,Grubb96,Schrohe-Schulze} for example). The $R$-boundedness of such operator-families has been shown in very different  settings, see for example \cite[Proposition~3.6]{Hummel21}, \cite[Theorem~3.18]{Denk-Krainer07}, and \cite[Theorem~8]{Denk-Seiler15} for some specific results. Main applications include fluid mechanics and geometric evolution equations  (see, e.g., \cite{Pruess-Simonett16}, \cite{Saito21}, \cite{Shibata20}). Recently, $R$-boundedness of the solution operator was considered for non-local operators in \cite{Abels-Grubb25}.

While the above results and applications are connected with the $R$-boundedness of the resolvent,  the $R$-boundedness of families of Poisson operators has been addressed only very recently; in fact, we are only aware of the paper \cite{Hummel21}. In the present work we obtain new results for the $R$-boundedness of Poisson operators in a variety of function spaces.
Not only we consider Poisson operators originating from classical boundary value problems as descried above, but also a wider class of Poisson operators related to the analysis of so-called "edge-degenerate" differential operators on manifolds with boundary; see for example \cite{RempelSchulze82,Schulze91} for more details. In both cases parameter-dependent Poisson operators have the form
 $$(K_\mu g)(x)=\int_{\rz^{n-1}} e^{ix'\xi'} k(\xi',\mu,x_n)\wh{g}(\xi')\,d\xi',\qquad x=(x',x_n)\in\rz^n_+,$$
with a specific structure of the so-called symbol-kernel $k$, see Definition \ref{def:poisson-symbols} and Definition \ref{def:weak-poisson-symbols}, respectively; here $\wh g$ is the Fourier transform of $g$. In case of scalar operators (acting on complex-valued functions) the classes  of Poisson operators studied in this paper include those of \cite{Hummel21}; on the other hand, we do not consider Banach space valued Poisson operators as has been done there. However, it seems reasonable to expect that our techniques can be extended to more general settings.

As an application, we discuss maximal $L^q$-regularity for several boundary value problems with dynamic boundary conditions, including a variant of the Cahn--Hilliard equation and a road-field model based on a Kolmogorov–Petrovskii–Pis\-co\-nov equation. In both cases, we consider the space $L^p(\R^{n-1})$ on the boundary. This space also appears as a natural boundary space in applications to bulk-surface reaction-sorption-diffusion systems as studied in \cite{Augner-Bothe21}, \cite{Augner-Bothe24}. In contrast to our examples, the solution in these papers has regularity $H_p^2$ in the interior (bulk)  and is coupled to the surface concentration on the boundary. This allows a maximal regularity approach in standard spaces, similar to the analysis in \cite{Denk-Pruess-Zacher08}.

The paper is organized as follows. In Section~\ref{sec:02}, we provide some background on $R$-boundedness and random sequence spaces. In Section~\ref{sec:03}, we define the first  classes of Poisson operators with and without paprameter and show first boundedness and $R$-boundedness results. Our main results can be found in Sections \ref{sec:04} and \ref{sec:05}, where we consider the $R$-boundedness of parameter-dependent families of Poisson operators in Besov spaces of negative order (Theorems~ and \ref{thm:r-boundedness-negative-s} and \ref{thm:r-boundedness-positiv-s}), in (weak) $L^p$-spaces (Theorem~\ref{thm:weak-main01}), in Sobolev spaces (Section \ref{subsec:04.3}), and in further anisotropic and weighted spaces (Section~\ref{sec:05}). Finally, we apply the results to boundary value problems with dynamic boundary conditions in Section~\ref{sec:06}.

Throughout the paper we will use the following notation. We write $\xi=(\xi',\xi_n)$ for (co-)vectors $\xi\in\rz^n$. For functions $a:\rz^n_\xi\to\cz$, we shall denote by $a(D)$ the associated Fourier-multiplier,
 $$a(D)u=\scrF^{-1}(a\wh{u}).$$
In case $a(\xi)=a(\xi')$ is a function independent of $\xi_n$, we shall use the same notation $a(D')$ for the associated Fourier-multiplier on $\rz^n$ and $\rz^{n-1}$, respectively; it will be clear from the context which operator is intended. In particular, we will consider $a(\xi)=\spk{\xi}^s:=(1+|\xi|^2)^{s/2}$ and $a(\xi')=\spk{\xi'}^s:=(1+|\xi'|^2)^{s/2}$ with $s\in\rz$, yielding the operators $\spk{D}^s$ and $\spk{D'}^s$, respectively.

We denote by $H^s_{p}(\rz^n;X)$, $B^s_{pq}(\rz^n;X)$ and $F^s_{pq}(\rz^n;X)$ $(X$ Banach space, $1\le p,q\le+\infty)$ the standard scales of Bessel, Besov and Triebel-Lizorkin spaces, respectively (for a detailed introduction see Section 14.4 and Section 14.6, respectively, in \cite{Hytoenen-vanNeerven-Veraar-Weis23}). We shall also use the anisotropic spaces
\begin{align}\label{eq:weight}
 H^{s,\sigma}_{p}(\rz^n;X):=\langle D'\rangle^{-\sigma}\big(H^{s}_{p}(\rz^n;X)\big)
\end{align}
with the obvious definition of its norm and, analogously, the spaces $B^{s,\sigma}_{pq}(\rz^n;X)$ and $F^{s,\sigma}_{pq}(\rz^n;X)$.

\textbf{Acknowledgement:} The authors thank Gerd Grubb for useful discussions and her interest in the argument.

\section{Some background on R-boundedness}\label{sec:02}

In this section we summarize some background material on $R$-boundedness, following \cite{Hytoenen-vanNeerven-Veraar-Weis17}.

A (complex) \emph{Rademacher variable} is a random variable, defined on some probability space $(\Omega,\scrF,\mu)$, which is uniformly distributed over the complex
unit-sphere. We shall denote such a random variable by $\varepsilon$. We also consider sequences
$(\varepsilon_n)=(\varepsilon_n)_{n=1}^{+\infty}$ of independent Rademacher variables which will be called
\emph{Rademacher sequences}.

\begin{definition}
Let $X,Y$ be Banach spaces. A subset $\mathcal{T}$ of $\scrL(X,Y)$ is called \emph{$R$-bounded} provided there exist
$p\in[1,+\infty)$ and a constant $C=C_p\ge 0$ such that
 $$\Big\|\sum_{n=1}^N\varepsilon_j T_jx_j\Big\|_{L^p(\Omega;Y)}
     \le C\,\Big\|\sum_{n=1}^N\varepsilon_j x_j\Big\|_{L^p(\Omega;X)}$$
for every choice of $x_1,\ldots,x_N\in X$, $T_1,\ldots,T_N\in\mathcal{T}$, and $N\in\nz$.
The infimum over all such $C$ is called the $R$-bound of $\mathcal{T}$ and shall be denoted by
$R(\mathcal{T})=R_p(\mathcal{T})$.
\end{definition}

Obviously $R$-boundedness implies boundedness. If $\mathcal{S},\mathcal{T}\subset\scrL(X,Y)$ and $\mathcal{U}\subset\scrL(Y,Z)$  are $R$-bounded then
\begin{align*}
 \mathcal{S}+\mathcal{T}&=\{S+T\mid S\in \mathcal{S},\, T\in\mathcal{T}\}\subset\scrL(X,Y),\\
 \mathcal{UT}&=\{UT\mid R\in \mathcal{U},\, T\in\mathcal{T}\}\subset\scrL(X,Z)
\end{align*}
are also $R$-bounded with bounds $R(\mathcal{S}+\mathcal{T})\le R(\mathcal{S})+\R(\mathcal{T})$ and $R(\mathcal{UT})\le R(\mathcal{U})R(\mathcal{T})$.

A fundamental tool for the study of $R$-boundedness is Mikhlin's multiplier theorem; for a proof see \cite[Theorem 2.15]{Hummel21} and the references given there.

\begin{theorem}[Mikhlin's multiplier theorem]\label{ex:Mikhlin}
Let $X$ be a Banach space and let
$A\subset\scrC^\infty(\rz^{n}\setminus\{0\},\scrL(X))$ be such that
 $$\mathcal{A}:=\left\{|\xi|^{|\alpha|}\partial^{\alpha}_{\xi}a(\xi)\mid
   a\in A,\;|\alpha|\le n,\;\xi\in\rz^n\setminus\{0\}\right\}$$
is an $R$-bounded subset of $\scrL(X)$. If $X$ is a UMD space and has Pisier's property $(\alpha)$ then
$\{a(D)\mid a\in A\}$
is an $R$-bounded subset of $\scrL(H^s_p(\rz^n;X))$, $\scrL(B^s_{pq}(\rz^n;X))$, and
$\scrL(F^s_{pq}(\rz^n;X))$, whenever $s\in\rz$, $1<p<+\infty$, and $1\le q\le+\infty$.
Moreover, in any case, there exists a constant $C$ not depending on $A$ such that
 $$R\big(\{a(D)\mid a\in A\}\big)\le C R(\mathcal{A}).$$
\end{theorem}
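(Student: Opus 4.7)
The plan is to reduce the statement, on each of the scales $H^s_p$, $B^s_{pq}$, $F^s_{pq}$, to the vector-valued Mikhlin multiplier theorem of Girardi--Weis on $L^p(\R^n;X)$, which already incorporates the UMD and property $(\alpha)$ hypotheses and deduces $R$-boundedness of $\{a(D):a\in A\}\subset\scrL(L^p(\R^n;X))$ from the assumed $R$-boundedness of $\mathcal{A}$ in $\scrL(X)$.

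For the Bessel-potential case, every $a(D)$ commutes with the lifting $\spk{D}^s$, which is an isomorphism $H^s_p(\R^n;X)\to L^p(\R^n;X)$; consequently the $R$-bound of $\{a(D):a\in A\}$ on $H^s_p$ equals that on $L^p$, and the Girardi--Weis theorem applies verbatim.

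For the Besov and Triebel--Lizorkin cases I would fix a dyadic Littlewood--Paley resolution $(\phi_k)_{k\ge 0}$ together with enlarged cut-offs $\wt\phi_k$ satisfying $\wt\phi_k\phi_k=\phi_k$ and finite overlap, and use the well-known equivalences
\[
\|u\|_{B^s_{pq}(X)}\asymp \bigl\|(2^{ks}\phi_k(D)u)_k\bigr\|_{\ell^q(L^p(\R^n;X))},\qquad
\|u\|_{F^s_{pq}(X)}\asymp \bigl\|(2^{ks}\phi_k(D)u)_k\bigr\|_{L^p(\R^n;\ell^q(X))}.
\]
These realize $B^s_{pq}(X)$ and $F^s_{pq}(X)$ as retracts of sequence-valued $L^p$ spaces. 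Using $\wt\phi_k\phi_k=\phi_k$ and commutativity of Fourier multipliers, $a(D)$ is conjugated to the banded block-diagonal action $(v_k)_k\mapsto(\phi_k(D)a(D)\wt\phi_k(D)v_k)_k$, up to finite-range off-diagonal corrections with analogous structure. For the $F$-scale, since $\ell^q(X)$ inherits UMD and property $(\alpha)$ from $X$ when $1<q<\infty$, and since the contraction principle shows that the Mikhlin derivatives of $\mathrm{diag}(a(\cdot))$ enjoy the same pointwise $R$-bound in $\scrL(\ell^q(X))$ as $\mathcal{A}$ in $\scrL(X)$, a second application of the Girardi--Weis theorem --- now with target $\ell^q(X)$ --- closes this case with a constant controlled by $R(\mathcal{A})$. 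For the $B$-scale one side-steps the need for $\ell^q$-valued UMD: Kahane's inequality allows interchanging the Rademacher $L^p(\Omega)$-norm with the outer $\ell^q$-sum in $k$, reducing the estimate to the already-proven blockwise $L^p$-$R$-bound on each dyadic piece.

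\emph{Main obstacle.} The delicate endpoints are $q\in\{1,\infty\}$ on the $F$-scale, where $\ell^q$ is not UMD and the scheme above breaks; these are handled by a separate argument, e.g.\ via Peetre-maximal-function characterizations of $F$-norms or by reduction to the $B$-scale through the elementary embeddings $B^s_{p1}\hookrightarrow F^s_{pq}\hookrightarrow B^s_{p\infty}$ combined with interpolation/duality. A secondary technical check is the preservation of property $(\alpha)$ from $X$ to $\ell^q(X)$ for $1<q<\infty$, which is classical (cf.\ \cite[Ch.~7]{Hytoenen-vanNeerven-Veraar-Weis17}).
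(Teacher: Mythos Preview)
The paper does not supply its own proof; it states the theorem and refers the reader to \cite[Theorem~2.15]{Hummel21} and the references given there. So there is no argument in the paper to compare against beyond that citation.

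Assessing your sketch on its own terms: the $H^s_p$ case via conjugation by $\spk{D}^s$, and the $F^s_{pq}$ case for $1<q<\infty$ via the retraction onto $L^p(\R^n;\ell^q(X))$ followed by Girardi--Weis with target $\ell^q(X)$, are correct and standard. Two steps, however, do not work as written.

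For the $B$-scale, Kahane--Khintchine only compares different $L^r(\Omega)$-moments of a \emph{fixed} Rademacher sum in a \emph{fixed} Banach space; it does not let you commute the $L^2(\Omega)$-norm past the deterministic $\ell^q_k$-sum. What you would actually need is an identification of the type $\varepsilon\big(\ell^q_k(L^p(\R^n;X))\big)\cong \ell^q_k\big(\varepsilon(L^p(\R^n;X))\big)$, and this fails for $q\neq2$ in general. A route that does work uses $K$-convexity of $X$ (implied by UMD) to identify $\varepsilon(B^s_{pq}(\R^n;X))\cong B^s_{pq}(\R^n;\varepsilon(X))$ and then applies a \emph{boundedness} Mikhlin theorem on $B^s_{pq}$ to the $\scrL(\varepsilon(X))$-valued diagonal symbol $\mathrm{diag}(a_1,\ldots,a_N,0,\ldots)$, whose Mikhlin norm is controlled by $R(\mathcal A)$; compare the device used in the paper's proof of Theorem~\ref{thm:poisson-rbounded-01}.

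For the $F$-scale endpoints $q\in\{1,\infty\}$, your proposed fix via the embeddings $B^s_{p1}\hookrightarrow F^s_{pq}\hookrightarrow B^s_{p\infty}$ cannot close the argument: interpolating $R$-boundedness on the two Besov endpoints yields $R$-boundedness on $B^s_{pr}$ for intermediate $r$, not on $F^s_{pq}$, since the $F$-scale does not arise as a real-interpolation space between $B^s_{p1}$ and $B^s_{p\infty}$. These endpoints genuinely require a separate treatment.
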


We shall apply this theorem in the special case when $X=H$ is a Hilbert space.
Let $\calF(\scrL(H))$ denote the space of all functions $a\in\scrC^\infty(\rz^{n}\setminus\{0\},\scrL(H))$ with finite norm
 $$\|a\|_{\calF(\scrL(H))}
    =\sup_{\xi\in\rz^n,\,|\alpha|\le n}
    |\xi|^{|\alpha|}\|\partial^{\alpha}_{\xi}a(\xi)\|_{\scrL(H)}.$$
Since $R$-boundedness in $\scrL(H)$ coincides with usual boundedness, we obtain:

\begin{corollary}\label{cor:Mikhlin}
Let $A$ be a bounded subset of $\calF(\scrL(H))$. Then $\{a(D)\mid a\in A\}$
is an $R$-bounded subset of $\scrL(H^s_p(\rz^n;H))$, $\scrL(B^s_{pq}(\rz^n;H))$, and
$\scrL(F^s_{pq}(\rz^n,H))$, whenever $s\in\rz$, $1<p<+\infty$, and $1\le q\le+\infty$.
In any case, there exists a constant $C$ not depending on $A$ such that
 $$R\big(\{a(D)\mid a\in A\}\big)\le C \sup_{a\in A}\|a\|_{\calF(\scrL(H))}.$$
\end{corollary}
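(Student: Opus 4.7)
The plan is to reduce Corollary~\ref{cor:Mikhlin} directly to Theorem~\ref{ex:Mikhlin} by exploiting two well-known Hilbert-space features.

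\textbf{Step 1: Verify the structural hypotheses.} A Hilbert space $H$ is UMD (the Hilbert transform is bounded on $L^p(\rz;H)$ by Plancherel and interpolation) and enjoys Pisier's property~$(\alpha)$ (it even satisfies the stronger property that all Rademacher sums $\sum \varepsilon_j x_j$ in $L^2(\Omega;H)$ have norm equal to $(\sum \|x_j\|^2)^{1/2}$, which implies property $(\alpha)$ with constant $1$). These are standard facts and can be quoted from \cite{Hytoenen-vanNeerven-Veraar-Weis17}.

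\textbf{Step 2: Identify $R$-boundedness with boundedness in $\scrL(H)$.} The key point is that a subset $\mathcal{T}\subset\scrL(H)$ is $R$-bounded if and only if it is norm-bounded, with $R(\mathcal{T})$ comparable to $\sup_{T\in\mathcal{T}}\|T\|_{\scrL(H)}$. Indeed, for any $T_1,\dots,T_N\in\mathcal{T}$ and $x_1,\dots,x_N\in H$, independence and the fact that each $\varepsilon_j$ is uniformly distributed on the unit sphere give
\[
\Big\|\sum_{j=1}^N \varepsilon_j T_j x_j\Big\|_{L^2(\Omega;H)}^2
= \sum_{j=1}^N \|T_j x_j\|_H^2
\le \big(\sup_j \|T_j\|\big)^2 \sum_{j=1}^N \|x_j\|_H^2
= \big(\sup_j \|T_j\|\big)^2 \Big\|\sum_{j=1}^N \varepsilon_j x_j\Big\|_{L^2(\Omega;H)}^2.
\]
This gives $R_2(\mathcal{T})\le \sup_{T\in\mathcal{T}}\|T\|_{\scrL(H)}$; the reverse inequality is trivial.

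\textbf{Step 3: Apply Mikhlin.} Let $A$ be a bounded subset of $\calF(\scrL(H))$ with $M:=\sup_{a\in A}\|a\|_{\calF(\scrL(H))}<+\infty$. By definition of the norm on $\calF(\scrL(H))$, the set
\[
\mathcal{A}=\big\{|\xi|^{|\alpha|}\partial^{\alpha}_{\xi}a(\xi)\mid a\in A,\ |\alpha|\le n,\ \xi\in\rz^n\setminus\{0\}\big\}\subset \scrL(H)
\]
is norm-bounded by $M$, hence by Step~2 it is $R$-bounded with $R(\mathcal{A})\le M$. Combining this with Steps~1--2, Theorem~\ref{ex:Mikhlin} applies and yields the $R$-boundedness of $\{a(D)\mid a\in A\}$ in the three scales of function spaces, together with the estimate
\[
R\big(\{a(D)\mid a\in A\}\big)\le C\,R(\mathcal{A}) \le C\,M= C\sup_{a\in A}\|a\|_{\calF(\scrL(H))},
\]
with $C$ independent of $A$ (it depends only on $n$, $p$, $q$, $s$, and on the UMD/$(\alpha)$ constants of $H$, which are universal). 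No genuine obstacle arises; the corollary is essentially a dictionary translation of Theorem~\ref{ex:Mikhlin} once the Hilbert-space equivalence in Step~2 is recorded.
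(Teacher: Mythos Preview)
Your proof is correct and follows exactly the approach of the paper: the paper simply notes that ``$R$-boundedness in $\scrL(H)$ coincides with usual boundedness'' and deduces the corollary from Theorem~\ref{ex:Mikhlin}, which is precisely what your Steps~1--3 carry out in more detail.
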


Given a Rademacher sequence and a sequence $(x_n)$ in an arbitrary Banach space $X$, the so-called \emph{Kahane-Khintchine inequality} holds true: For every choice of $0<p<q<+\infty$ there exists a constant $\kappa=\kappa_{pq}$ such that, for arbitary $N\in\nz$,
 $$\Big\|\sum_{n=1}^N \varepsilon_nx_n\Big\|_{L^q(\Omega;X)}
    \le \kappa\, \Big\|\sum_{n=1}^N \varepsilon_nx_n\Big\|_{L^p(\Omega;X)}.$$
For this reason, if a family of operators has a finite $R$-bound with respect to one $p=p_0\in[1,+\infty)$, it has a finite $R$-bound with repect to every choice of $p\in[1,+\infty)$.

It is convenient to introduce the notion of \emph{random sequence space}:

\begin{definition}
Let $p\ge1$ and $X$ be a Banach space. $\varepsilon^p(X)$ denotes the space of all sequences $(x_n)\subset X$ for which the series $\sum\limits_{n=1}^{+\infty}\varepsilon_nx_n$ converges in $L^p(\Omega;X)$.
We equip $\varepsilon^p(X)$ with the norm
 $$\|(x_n)\|_{\varepsilon^p(X)}:=\Big\|\sum_{n=1}^{+\infty}\varepsilon_nx_n\Big\|_{L^p(\Omega,X)}.$$
\end{definition}

One can show that $\varepsilon^p(X)$ is a Banach space and that
 $$\|(x_n)\|_{\varepsilon^p(X)}=\sup_{N\ge 1}\Big\|\sum_{n=1}^{N}\varepsilon_nx_n\Big\|_{L^p(\Omega,X)}.$$
As an immediate consequence of the Kahane-Khintchine inequality,
$\varepsilon^p(X)\cong\varepsilon^q(X)$ for every choice
of $1\le p,q<+\infty$. For this reason one sets
\begin{equation}\label{eq:varepsilon}
 \varepsilon(X):=\varepsilon^2(X)\qquad (\cong \varepsilon^p(X)\text{ for every $1\le p<+\infty$}).
\end{equation}
Due to the orthogonality of the elements of a Rademacher sequence, clearly $\varepsilon(\cz)=\ell^2(\cz)$, the space of square-summable sequences.
$R$-bounded\-ness can be reformulated in the following way:

\begin{remark}\label{rem:rbounded-bounded}
Let $\mathcal{T}$ be a subset of $\scrL(X,Y)$. Set
 $$\mathcal{T}_\varepsilon
    =\big\{\mathbf{T}=(T_n)_{n=1}^{+\infty}\subset\mathcal{T}\mid
    T_n\not=0\text{ for at most finitely many $n\in\nz$}\big\}.$$
Any $\mathbf{T}\in\mathcal{T}_\varepsilon$ induces a bounded map
$\mathbf{T}:\varepsilon(X)\lra\varepsilon(Y)$ by
 $$(x_n)\mapsto (T_1x_1,T_2x_2,T_3x_3\ldots).$$
Then  $\mathcal{T}$ is $R$-bounded if and only if
$\mathcal{T}_\varepsilon$ is a bounded subset of
$\scrL(\varepsilon(X),\varepsilon(Y))$.
\end{remark}

\begin{theorem}\label{lem:varepsilon-interpolation}
Let $(X_0,X_1)$ be an interpolation couple of Banach spaces. Moreover, let $(\cdot,\cdot)_\theta$ denote an  interpolation space formed with the real or complex interpolation method. Then
 $$(\varepsilon(X_0),\varepsilon(X_1))_\theta\hookrightarrow \varepsilon((X_0,X_1)_\theta).$$
If both $X_0$ and $X_1$ are $K$-convex then the embedding is an isomorphism.
\end{theorem}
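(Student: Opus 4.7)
The plan is to realise each $\varepsilon(X)$ as a closed subspace of the Bochner space $L^2(\Omega;X)$ via the isometry
\[ J_X\colon \varepsilon(X)\lra L^2(\Omega;X),\qquad J_X((x_n))=\sum_{n=1}^{+\infty}\varepsilon_nx_n, \]
with range $E_X:=J_X(\varepsilon(X))$, and then to reduce the claim to the corresponding interpolation property for Bochner spaces, namely
\[ (L^2(\Omega;X_0),L^2(\Omega;X_1))_\theta \cong L^2(\Omega;(X_0,X_1)_\theta), \]
which is a standard fact for the complex method and for the real method (with second exponent $2$).

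To establish the embedding, I would first observe that the $n$-th coordinate functional $R_n\colon \varepsilon(X)\to X$, $(x_m)_m\mapsto x_n$, is a contraction, since $x_n=\mathbb{E}[\bar\varepsilon_n J_X((x_m))]$. Interpolation yields a bounded map $R_n\colon(\varepsilon(X_0),\varepsilon(X_1))_\theta\to(X_0,X_1)_\theta$, so each coordinate of an element $f\in(\varepsilon(X_0),\varepsilon(X_1))_\theta$ automatically lies in $(X_0,X_1)_\theta$. To upgrade this coordinatewise information to genuine membership in $\varepsilon((X_0,X_1)_\theta)$, I would exploit the interpolation-compatibility of $J_X$, which produces a continuous embedding $(\varepsilon(X_0),\varepsilon(X_1))_\theta\hookrightarrow(L^2(\Omega;X_0),L^2(\Omega;X_1))_\theta\cong L^2(\Omega;(X_0,X_1)_\theta)$, and then identify the image as sitting inside the closed chaos subspace $E_{(X_0,X_1)_\theta}$. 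That identification is immediate on the dense subspace of finitely supported sequences from $\varepsilon(X_0)\cap\varepsilon(X_1)$, and extends to the whole space by closedness of $E_{(X_0,X_1)_\theta}$.

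For the isomorphism under $K$-convexity, the key input is that $K$-convexity of $X$ is precisely the boundedness of the Rademacher projection $\pi_X\colon L^2(\Omega;X)\to E_X$. Since the same operator $\pi$ acts on each member of the couple $(L^2(\Omega;X_0),L^2(\Omega;X_1))$, interpolation gives a bounded retraction onto $(E_{X_0},E_{X_1})_\theta$; composing with the Bochner interpolation identity provides the reverse embedding $\varepsilon((X_0,X_1)_\theta)\hookrightarrow(\varepsilon(X_0),\varepsilon(X_1))_\theta$ with the right norm control. The main technical obstacle I expect is the real interpolation case: the Bochner interpolation identity is sensitive to the second exponent $q$, and the density of finitely supported sequences fails for $q=+\infty$. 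These endpoint cases would need to be treated separately, either by duality, by replacing $L^2$ with the appropriate $L^q$, or by invoking more refined interpolation identities.
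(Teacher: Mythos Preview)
The paper does not actually include a proof of this theorem in the compiled text; the source contains only a suppressed remark (wrapped in the paper's \verb|\forget| macro) stating that the result ``is based on a retraction--coretraction argument'' and pointing to Kaip--Saal and Kalton--Kunstmann--Weis for details. Your proposal is exactly that retraction--coretraction argument: the coretraction is the isometric embedding $J_X\colon\varepsilon(X)\hookrightarrow L^2(\Omega;X)$, the retraction is the Rademacher projection $\pi_X$ (whose boundedness is the definition of $K$-convexity), and the ambient interpolation is the Bochner identity for $L^2(\Omega;\,\cdot\,)$. You also correctly isolate the one genuine technical point --- that for real interpolation $(\cdot,\cdot)_{\theta,q}$ with $q\neq 2$ the Bochner identity fails --- and name the standard remedy of replacing $L^2$ by $L^q$ via the Kahane--Khintchine equivalence $\varepsilon(X)\cong\varepsilon^q(X)$, so that the identity $(L^q(\Omega;X_0),L^q(\Omega;X_1))_{\theta,q}=L^q(\Omega;(X_0,X_1)_{\theta,q})$ applies with matching exponents. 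In short, your outline coincides with the argument the paper defers to the literature.
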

\forget{
\begin{proof}
This result is based on a retraction-coretraction argument and can be found in \cite[Proposition 3.16]{Kaip-Saal12}, see also the proof of  in \cite[Proposition 3.7]{Kalton-Kunstmann-Weis06}. Note that in both papers, the result is formulated in terms of  Rademacher sequence spaces which are based on real symmetric Rademacher variables; however, the correspondig norm is equivalent to the norm in $\varepsilon(X)$-spaces by \cite[Proposition~6.1.19]{Hytoenen-vanNeerven-Veraar-Weis17}.
\end{proof}
}

We shall not enter in details about the property of $K$-convexity but only assert that all function and distribution spaces considered later on have this property. As a consequence of the previous theorem, $R$-boundedness is stable under interpolation:

\begin{theorem}\label{thm:R-boundedness-interpolation}
Let $(X_0,X_1)$ and $(Y_0,Y_1)$ be interpolation couples of Banach spaces. Moreover, let $(\cdot,\cdot)_\theta$ denote an interpolation space formed with the real or complex interpolation method. Let $\mathscr{T}\subset\scrL(X_0+X_1,Y_0+Y_1)$ and assume that $\mathscr{T}$ by restriction yields sets $\mathscr{T}_j\subset\scrL(X_j,Y_j)$, $j=0,1$, which are $R$-bounded.
If both $X_0$ and $X_1$ are $K$-convex then
 $$\mathscr{T}_\theta\subset\scrL\big((X_0,X_1)_\theta,(Y_0,Y_1)_\theta\big)$$
$($obtained by restriction$)$ is also $R$-bounded. Moreover,
 $$\mathcal{R}(\mathscr{T}_\theta)\lesssim
   \mathcal{R}(\mathscr{T}_0)^{1-\theta}\mathcal{R}(\mathscr{T}_1)^{\theta}.$$
\end{theorem}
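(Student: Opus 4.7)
The plan is to reduce the $R$-boundedness statement to a boundedness statement on random sequence spaces via Remark~\ref{rem:rbounded-bounded}, then apply the classical interpolation theorem for linear operators on interpolation couples, and finally translate back using Theorem~\ref{lem:varepsilon-interpolation}. The $K$-convexity hypothesis enters in exactly one place: it is needed to turn the embedding of Theorem~\ref{lem:varepsilon-interpolation} into an isomorphism on the domain side.

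In detail, I would proceed as follows. Fix an arbitrary $\mathbf{T}=(T_n)_{n=1}^{\infty}\in\mathscr{T}_\varepsilon$, i.e., a sequence in $\mathscr{T}$ with only finitely many nonzero entries. By Remark~\ref{rem:rbounded-bounded} applied to $\mathscr{T}_0$ and $\mathscr{T}_1$, the coordinatewise action $(x_n)\mapsto(T_nx_n)$ defines bounded operators
\[
 \mathbf{T}\colon \varepsilon(X_j)\lra\varepsilon(Y_j),\qquad j=0,1,
\]
with operator norms bounded by $\mathcal{R}(\mathscr{T}_j)$. Since these operators agree on $\varepsilon(X_0)\cap\varepsilon(X_1)$ (being given by the same pointwise formula), the standard interpolation theorem for linear operators (applied in either the real or complex version, matching $(\cdot,\cdot)_\theta$) yields a bounded map
\[
 \mathbf{T}\colon \bigl(\varepsilon(X_0),\varepsilon(X_1)\bigr)_\theta \lra \bigl(\varepsilon(Y_0),\varepsilon(Y_1)\bigr)_\theta
\]
of norm at most $\mathcal{R}(\mathscr{T}_0)^{1-\theta}\mathcal{R}(\mathscr{T}_1)^{\theta}$.

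Now I would invoke Theorem~\ref{lem:varepsilon-interpolation} on both sides. On the codomain side the unconditional embedding
\[
 \bigl(\varepsilon(Y_0),\varepsilon(Y_1)\bigr)_\theta\hookrightarrow \varepsilon\bigl((Y_0,Y_1)_\theta\bigr)
\]
suffices. On the domain side, however, I need the reverse direction, and this is exactly where $K$-convexity of $X_0$ and $X_1$ is used: it upgrades the embedding of Theorem~\ref{lem:varepsilon-interpolation} to an isomorphism, giving
\[
 \varepsilon\bigl((X_0,X_1)_\theta\bigr)\hookrightarrow \bigl(\varepsilon(X_0),\varepsilon(X_1)\bigr)_\theta
\]
with controlled norm. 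Composing these embeddings with $\mathbf{T}$ yields a bounded operator
\[
 \mathbf{T}\colon \varepsilon\bigl((X_0,X_1)_\theta\bigr)\lra \varepsilon\bigl((Y_0,Y_1)_\theta\bigr)
\]
whose norm is $\lesssim \mathcal{R}(\mathscr{T}_0)^{1-\theta}\mathcal{R}(\mathscr{T}_1)^{\theta}$, with a constant depending only on the interpolation isomorphism constants and hence independent of the particular $\mathbf{T}\in\mathscr{T}_\varepsilon$. Appealing to Remark~\ref{rem:rbounded-bounded} once more, now in the interpolated couple, this is precisely the $R$-boundedness of $\mathscr{T}_\theta$ together with the claimed estimate.

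The only real subtlety is bookkeeping: one must check that $\mathscr{T}$ genuinely restricts to the same coordinatewise operator on each of the spaces in question, so that the interpolation theorem applies to \emph{one} operator defined on $\varepsilon(X_0)+\varepsilon(X_1)$. This is immediate from the hypothesis $\mathscr{T}\subset\scrL(X_0+X_1,Y_0+Y_1)$, since each $T_n$ is already a single operator on the sum. No $K$-convexity of $Y_0,Y_1$ is required because on the codomain side we only need an embedding, not an isomorphism; this asymmetry is the key observation of the argument.
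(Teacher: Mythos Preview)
Your proposal is correct and matches the paper's approach: the paper presents Theorem~\ref{thm:R-boundedness-interpolation} simply as ``a consequence of the previous theorem'' (Theorem~\ref{lem:varepsilon-interpolation}) without further proof, and the argument you spell out---translating $R$-boundedness to boundedness on $\varepsilon$-spaces via Remark~\ref{rem:rbounded-bounded}, interpolating, and using the $K$-convexity hypothesis only on the domain side to reverse the embedding---is precisely the intended one. Your observation about the asymmetry between domain and codomain (isomorphism needed only on the $X$-side) is the key point and explains why $K$-convexity is assumed for $X_0,X_1$ but not for $Y_0,Y_1$.
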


Let us now determine the random sequence spaces in some concrete examples.

\begin{lemma}\label{lem:varepsilon-of-lp}
Let $X$ be a Banach space, $(S,\Sigma,\sigma)$ a measure space, and $1\le p<+\infty$. Then
 $$\varepsilon^p(L^p(S;X))\cong L^p(S;\varepsilon^p(X))$$
isometrically.  In particular, $\varepsilon(L^p(S;X))\cong L^p(S;\varepsilon(X))$ and $\varepsilon(L^p(S))\cong L^p(S;\ell^2(\cz))$ in case $X=\cz$.
\end{lemma}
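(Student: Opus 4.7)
The plan is to identify both sides with $L^p(\Omega\times S;X)$ via Fubini's theorem, after restricting to finitely supported sequences and then passing to a density argument.

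First I would test the isometry on finitely supported sequences $(f_n)\subset L^p(S;X)$. For such a sequence, $\sum_n\varepsilon_n f_n$ is a finite $L^p(\Omega;L^p(S;X))$-valued sum, so there is no convergence issue. Using the canonical isometric identification $L^p(\Omega;L^p(S;X))\cong L^p(\Omega\times S;X)$ together with Fubini (valid because $1\le p<+\infty$), I would compute
\begin{align*}
 \Big\|\sum_n \varepsilon_n f_n\Big\|_{L^p(\Omega;L^p(S;X))}^p
 &= \int_\Omega\int_S \Big\|\sum_n \varepsilon_n(\omega) f_n(s)\Big\|_X^p\,d\sigma(s)\,d\mu(\omega)\\
 &= \int_S \Big\|\sum_n \varepsilon_n f_n(s)\Big\|_{L^p(\Omega;X)}^p\,d\sigma(s)
  = \int_S \|(f_n(s))\|_{\varepsilon^p(X)}^p\,d\sigma(s).
\end{align*}
This gives the desired isometric equality on the dense subset of finitely supported sequences.

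Next I would handle the general case by approximation. Given $(f_n)\in\varepsilon^p(L^p(S;X))$, set $(f_n^{(N)})=(f_1,\ldots,f_N,0,0,\ldots)$. By the definition of $\varepsilon^p$, the sequence $\sum_{n=1}^N \varepsilon_n f_n$ is Cauchy in $L^p(\Omega;L^p(S;X))$. Passing to a subsequence, I can assume a.e.\ convergence in $\Omega\times S$; together with the pointwise step above, this identifies the a.e.\ limit with a function $s\mapsto\sum_n \varepsilon_n(\cdot)f_n(s)$ whose $L^p(\Omega;X)$-norm coincides, $\sigma$-a.e., with $\|(f_n(s))\|_{\varepsilon^p(X)}$; Fatou and the finite-support identity then give the isometry in both directions, and completeness of both spaces yields that the map is surjective onto $L^p(S;\varepsilon^p(X))$.

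The particular cases stated are immediate: $\varepsilon(L^p(S;X))\cong L^p(S;\varepsilon(X))$ follows by combining the $p=p$ case above with the Kahane--Khintchine equivalence $\varepsilon^p\cong\varepsilon^2=\varepsilon$, and the scalar case $X=\cz$ reduces to $\varepsilon(\cz)=\ell^2(\cz)$ by orthogonality of the Rademacher sequence.

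The only delicate point I expect is a measurability/strong-measurability check, namely that the $\varepsilon^p(X)$-valued function $s\mapsto(f_n(s))_n$ is Bochner measurable on $S$; this is handled by noting that each truncated partial sum is Bochner measurable with values in $L^p(\Omega;X)\hookrightarrow \varepsilon^p(X)$-valued partial sums, and passing to the a.e.\ limit along the Cauchy sequence in $L^p(\Omega;L^p(S;X))$.
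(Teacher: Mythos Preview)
Your proof is correct and follows essentially the same approach as the paper's own proof: both rest on the Fubini identity $L^p(\Omega;L^p(S;X))\cong L^p(S;L^p(\Omega;X))$ applied to the Rademacher sum. The only difference is that the paper carries out the chain of equalities directly for an arbitrary $(f_n)\in\varepsilon^p(L^p(S;X))$ without explicitly separating the finitely supported case or discussing measurability, whereas you supply those technical details; so your argument is a more carefully justified version of the same computation.
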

\begin{proof}
Let $(f_n)$ be a sequence belonging to $\varepsilon^p(L^p(S;X))$. Then
\begin{align*}
 \|(f_n)\|_{\varepsilon^p(L^p(S;X))}^p
     &=\Big\|\sum_{n=1}^{+\infty}\varepsilon_n f_n\Big\|^p_{L^p(\Omega;L^p(S;X))}\\
     &=\int_\Omega\int_S \Big\|\sum_{n=1}^{+\infty}\varepsilon_n(\omega) f_n(s)\Big\|_X^p\,
     d\sigma(s)d\mu(\omega)\\
     &=\int_S \Big\|\sum_{n=1}^{+\infty}\varepsilon_n f_n(s)\Big\|_{L^p(\Omega;X)}^p\,
     d\sigma(s)\\
     &=\int_S \|(f_n(s))\|_{\varepsilon^p(X)}^p\,d\sigma(s)\\
     &=\|(f_n)\|_{L^p(S;\varepsilon^p(X))}^p.
\end{align*}
This shows the first claim. The second then follows from \eqref{eq:varepsilon}.
\end{proof}

\forget{
Let us denote by $B^s_{pq}(\rz^n;X)$ and $F^s_{pq}(\rz^n;X)$ the standard scales of Besov and Triebel-Lizorkin spaces, respectively (for a detailed introduction see Section 14.4 and Section 14.6, respectively, in \cite{Hytoenen-vanNeerven-Veraar-Weis23}). We shall also use the anisotropic spaces $B^{s,\sigma}_{pq}(\rz^n;X)$ and $F^{s,\sigma}_{pq}(\rz^n;X)$ defined by
\begin{align}\label{eq:anisotropic-spaces}
\begin{split}
 B^{s,\sigma}_{pq}(\rz^n;X):=\langle D'\rangle^{-\sigma}\big(B^{s}_{pq}(\rz^n;X)\big),\\
 F^{s,\sigma}_{pq}(\rz^n;X):=\langle D'\rangle^{-\sigma}\big(F^{s}_{pq}(\rz^n;X)\big),
\end{split}
\end{align}
with the obvious definition of its norm, where $\spk{D'}^s$ denotes the Forier multiplier with symbol $m(\xi)=m(\xi',\xi_n)=\spk{\xi'}:=(1+|\xi'|^2)^{1/2}$. Restriction to the half-space $\rz^n_+$ with the standard quotient topology leads to the scale $F^{s,\sigma}_{pq}(\rz^n_+;X)$.
}

\begin{corollary}\label{cor:interpolation}
Let $1<p<+\infty$, $1\le q\le +\infty$, and $s,\sigma\in\rz$. Then
\begin{align*}
 \varepsilon(H^{s,\sigma}_p(\rz^n))&\cong H^{s,\sigma}_p(\rz^n;\ell^2(\cz)),\\
     \varepsilon(B^{s,\sigma}_{pq}(\rz^n))&\cong B^{s,\sigma}_{pq}(\rz^n;\ell^2(\cz)),\\
     \varepsilon(F^{s,\sigma}_{pq}(\rz^n))&\cong F^{s,\sigma}_{pq}(\rz^n;\ell^2(\cz))\qquad (1<q<+\infty).
\end{align*}
Analogous relations hold for the spaces on the half-space $\rz^n_+$.
\end{corollary}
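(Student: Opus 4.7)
The plan is to handle the three scales in turn, reducing the anisotropic parameter $\sigma$ via the isomorphism $\langle D'\rangle^{-\sigma}\colon X^{s,0}\to X^{s,\sigma}$ (for $X\in\{H_p,B_{pq},F_{pq}\}$). This is implemented by a scalar Fourier multiplier and hence commutes both with the formation of $\varepsilon(\cdot)$ and with passage to $\ell^2(\C)$-valued functions, so in what follows I pretend $\sigma=0$.

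For the Bessel-potential case I would exploit that $\langle D\rangle^s$ is an isomorphism both $H^s_p(\R^n)\to L^p(\R^n)$ and $H^s_p(\R^n;\ell^2(\C))\to L^p(\R^n;\ell^2(\C))$ (the latter being the very definition of the $\ell^2(\C)$-valued Bessel potential space). Any Banach space isomorphism $T\colon X\to Y$ induces, acting componentwise, an isomorphism $\varepsilon(X)\cong\varepsilon(Y)$, so chaining with Lemma~\ref{lem:varepsilon-of-lp} produces the string
\[\varepsilon(H^s_p(\R^n))\cong\varepsilon(L^p(\R^n))\cong L^p(\R^n;\ell^2(\C))\cong H^s_p(\R^n;\ell^2(\C)).\]

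For the Besov case, I pick $s_0<s<s_1$ with $s=(1-\theta)s_0+\theta s_1$, $\theta\in(0,1)$. The standard real-interpolation identity $(H^{s_0}_p(\R^n;X),H^{s_1}_p(\R^n;X))_{\theta,q}=B^s_{pq}(\R^n;X)$ holds for both $X=\C$ and $X=\ell^2(\C)$. Since $1<p<\infty$, the spaces $H^{s_i}_p$ are reflexive, hence $K$-convex, so Theorem~\ref{lem:varepsilon-interpolation} swaps $\varepsilon(\cdot)$ with real interpolation and the already-established Bessel case yields the Besov claim. For the Triebel--Lizorkin case with $1<q<\infty$, I would instead argue directly from the Littlewood--Paley characterization: fix a dyadic partition $(\varphi_k)_{k\ge 0}$ of the identity; for a finitely supported family $(u_n)\subset F^s_{pq}(\R^n)$, Fubini rearranges
\[\Bigl\|\sum_n\varepsilon_n u_n\Bigr\|_{L^p(\Omega;F^s_{pq})}^p=\int_{\R^n}\Bigl\|\Bigl(\sum_n\varepsilon_n\,2^{ks}\varphi_k(D)u_n(x)\Bigr)_k\Bigr\|_{L^p(\Omega;\ell^q)}^p\,dx,\]
and then the identity $\varepsilon(\ell^q)\cong\ell^q(\ell^2(\C))$---which is Lemma~\ref{lem:varepsilon-of-lp} applied to $S=\N$ with counting measure, and precisely where the hypothesis $1<q<\infty$ enters---converts the inner Rademacher sum into an $\ell^2$-norm over $n$, unfolding to $\|(u_n)\|_{F^s_{pq}(\R^n;\ell^2(\C))}^p$. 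The half-space versions follow from the $\R^n$ ones via restriction together with a Seeley-type bounded extension, since such a retract structure commutes with $\varepsilon(\cdot)$.

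The main obstacle I expect is the Triebel--Lizorkin step: three nested orderings ($L^p(\Omega)$, $L^p(\R^n)$ and $\ell^q$) must be interchanged and Kahane--Khintchine applied in the middle layer. This is also where the restriction $1<q<\infty$ is genuinely needed, since $\varepsilon(\ell^\infty)\not\cong\ell^\infty(\ell^2(\C))$. By contrast the Bessel case is essentially tautological once Lemma~\ref{lem:varepsilon-of-lp} is in hand, and the Besov case is mechanical given the $K$-convexity assertion made in the paper together with Theorem~\ref{lem:varepsilon-interpolation}.
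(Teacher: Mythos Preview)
Your proposal is correct. For the Bessel and Besov scales you follow essentially the paper's route: reduce to $L^p$ via the Fourier multiplier $\langle D\rangle^s\langle D'\rangle^\sigma$, invoke Lemma~\ref{lem:varepsilon-of-lp}, and then real-interpolate using Theorem~\ref{lem:varepsilon-interpolation} together with $K$-convexity.

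For the Triebel--Lizorkin scale, however, you diverge from the paper. The paper's proof simply says ``the remaining claims then follow from interpolation, using Lemma~\ref{lem:varepsilon-interpolation}'', leaving it to the reader to supply an interpolation identity that manufactures $F^s_{pq}$ out of the already-handled Bessel and Besov endpoints. You instead give a direct Littlewood--Paley argument: Fubini to swap $L^p(\Omega)$ and $L^p(\R^n)$, then Kahane--Khintchine plus Lemma~\ref{lem:varepsilon-of-lp} on $\ell^q$ to convert the Rademacher sum into an $\ell^2$-norm pointwise. This is more explicit and self-contained than the paper's one-line appeal to interpolation. One small correction: the identification $\varepsilon(\ell^q)\cong\ell^q(\ell^2(\C))$ via Lemma~\ref{lem:varepsilon-of-lp} actually holds for every $1\le q<\infty$, so the lower bound $q>1$ is not forced by your argument; it is presumably needed in the paper's interpolation approach (for $K$-convexity of the endpoint spaces in Theorem~\ref{lem:varepsilon-interpolation}), not in yours.
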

\begin{proof}
The Fourier multiplier $M:=\spk{D}^s\langle D'\rangle^{\sigma}$  induces isomorphisms  $H^{s,\sigma}_p(\rz^n)\to L^p(\rz^n)$ as well as
$H^{s,\sigma}_p(\rz^n;\ell^2(\nz))\to L^p(\rz^n;\ell^2(\nz))$. Therefore, using Lemma \ref{lem:varepsilon-of-lp} with $S=\rz^n$
and $X=\cz$,
\begin{align*}
 (f_n)\in \varepsilon(H^{s,\sigma}_p(\rz^n))
 &\iff (Mf_n)\in \varepsilon(L^p(\rz^n))\\
 &\iff x\mapsto ((Mf_n)(x))\in L^p(\rz^n;\ell^2(\cz))\\
 &\iff x\mapsto (f_n(x))\in H^{s,\sigma}_p(\rz^n;\ell^2(\cz));
\end{align*}
recall that $\varepsilon(\cz)\cong\ell^2(\cz)$. The remaining claims then follow from interpolation, using Lemma \ref{lem:varepsilon-interpolation}.
\end{proof}

\section{Poisson operators}\label{sec:03}

\subsection{An example}\label{subsec:example}
In order to motivate the class(es) of operators which are subject of the present paper let us consider the Dirichlet problem for the operator $A:=\Delta-1$ in the half-space, i.e.,
\begin{align*}
 (\mu^2 -A) u(x)&=0, & & x=(x',x_n)\in\rz^{n}_+=\{x\mid x_n>0\},\\
 u(x',0)&=g(x'),& & x'\in\rz^{n-1},
\end{align*}
with a complex parameter $\mu$ satisfying $\Re \mu>0$  (i.e., $\mu=\sqrt{\lambda}$ with $\lambda\in\cz\setminus(-\infty,0])$.

Taking the Fourier transform in $x'$, the problem is equivalent to
 $$\tau(\xi',\mu)^2\, \wh{u}(\xi',x_n)-\partial^2_{x_n}\wh{u}(\xi',x_n)=0,\qquad \wh{u}(\xi',0)=\wh{g}(\xi'),$$
where $\tau(\xi',\mu)=\sqrt{1+|\xi'|^2+\mu^2}$ with the principal branch of the square root on $\cz\setminus(-\infty,0]$. The bounded solution of this initial value problem is given by $\wh{u}(\xi',x_n)=e^{-\tau(\xi',\mu)x_n}\wh{g}(\xi')$, which leads to the solution formula
\begin{equation}\label{eq:example}
 u(x)=(K_\mu g)(x):=\int e^{-ix'\xi'}e^{-\tau(\xi',\mu)x_n}\wh g(\xi')\,\dbar\xi'.
\end{equation}
$K_\mu$ is the prototype of a so-called (strongly) \emph{parameter-dependent Poisson operator}. In general, we shall consider operators $K_\mu=\op(k)(\mu)$ of the form
\begin{equation}\label{eq:poisson-operator}
 (K_\mu g)(x',x_n)=\int e^{-ix'\xi'}k(\xi',\mu;x_n)\wh{g}(\xi')\,\dbar\xi',\qquad x_n>0,
\end{equation}
with so-called \emph{symbol-kernels} $k(\xi',\mu;x_n)$ to be specified below;
the parameter $\mu$ will range in a sub-sector of the complex plane
 $$\Sigma=\big\{0\not=\mu\in\rz^2\cong\cz\mid \alpha<\mathrm{arg}\,\mu<\beta\big\},\qquad 0\le\beta-\alpha\le2\pi;$$
also the case $\Sigma=\emptyset$ is permitted.

\subsection{Parameter-dependent symbol-kernels}

For a systematic analysis of Poisson operators we shall need the following definition.

\begin{definition}
Let $d\in\rz$ and $E$ be a Fréchet space. Then $S^d(\rz^{m}\times\Sigma;E)$ denotes the space of all smooth functions
$a:\rz^{m}\times\Sigma\to E$ such that
 $$\trinorm{D^{\alpha}_{\eta}D^\beta_\mu a(\eta,\mu)}\lesssim \spk{\eta,\mu}^{d-|\alpha|-|\beta|}$$
for every $\alpha\in\nz_0^{m}$, every $\beta\in\nz_0^2$, and every continuous semi-norm on $E$; here $\spk{\eta,\mu}=(1+|\eta|^2+|\mu|^2)^{1/2}$.
In case $\Sigma$ is the empty set, we define $\rz^{m}\times\Sigma:=\rz^{m}$ and $a$ is intended to be a function of the variable $\eta$ only; moreover, in the previous estimate, there are no derivatives with respect to $\mu$ and $\spk{\eta,\mu}$ is replaced by $\spk{\eta}:=\spk{\eta,0}$. This convention will be applied throughout the paper.
\end{definition}

We shall apply the previous definition in both cases $\rz^m=\rz^n$ and $\rz^m=\rz^{n-1}$; in these cases we shall replace $\eta$
by $\xi$ and $\xi'$, respectively.

Taking a countable system of semi-norms $\trinorm{\cdot}_j$ determining the topology of $E$, the space
$S^d(\rz^{m}\times\Sigma;E)$ becomes a Fréchet space with the semi-norms
 $$a\mapsto\sup_{\substack{(\eta,\mu)\in\rz^{m}\times\Sigma,\\|\alpha|+|\beta|+j\le N}}
     \trinorm{D^{\alpha}_{\eta}D^\beta_\mu a(\eta,\mu)}_{j}\spk{\eta,\mu}^{-d+|\alpha|+|\beta|}, \qquad N\in\nz.$$

\begin{definition}\label{def:poisson-symbols}
Let $Z$ be a Banach space and $d\in\rz$. The space $S^d_P(\rz^{n-1}\times\Sigma;Z)$
consists of all smooth functions $k:\rz^{n-1}\times\Sigma\times\rz_+\lra Z$ such that
 $$\wt{k}(\xi',\mu;t):=k(\xi',\mu;\spk{\xi',\mu}^{-1}t)\in
   S^{d}(\rz^{n-1}\times\Sigma;\scrS(\rz_+;Z)),$$
Any such $k$ is called a $(Z$-valued$)$ \emph{Poisson symbol-kernel} of order $d$.
\end{definition}

The canonical bijection
 $$k\mapsto\wt{k} :S^d_P(\rz^{n-1}\times\Sigma;Z)\lra S^{d}(\rz^{n-1}\times\Sigma;\scrS(\rz_+;Z))$$
induces a Fréchet topology on $S^d_P(\rz^{n-1}\times\Sigma;Z)$; it is determined by the system of norms
\begin{align}\label{eq:semi-norm}
 \trinorm{k}_{d,(N)}:=\sup_{\substack{(\eta,\mu,t)\in\rz^{n-1}\times\Sigma\times\rz_+,\\|\alpha'|+|\beta|+\ell+\ell'\le N}}
     \|t^\ell D^{\ell'}_t D^{\alpha'}_{\xi'}D^\beta_\mu \wt{k}(\xi',\mu;t)\|_Z\spk{\eta,\mu}^{-d+|\alpha|+|\beta|}
\end{align}
with $N\in\nz$. Of particular importance will be the case $Z=\scrL(X)$ with a Banach space $X$. In this case, with a Poisson symbol-kernel $k\in S^d_P(\rz^{n-1}\times\Sigma;\scrL(X))$ we associate the parameter-dependent  operator
\begin{equation*}
 K_\mu=\op(k)(\mu):\scrS(\rz^{n-1};X)\to\scrS(\rz^{n}_+;X)
\end{equation*}
as in \eqref{eq:poisson-operator}.
Any such $K_\mu$ is called a $(X$-valued) Poisson operator of order $d$. In case $\Sigma=\emptyset$, we intend $K=K_\mu$ to be an operator that does not depend on the parameter.

\begin{example}
Let us return to the example of Section \ref{subsec:example}. The symbol-kernel in \eqref{eq:example} is $k(\xi',\mu,x_n)=e^{-\tau(\xi',\mu)x_n}$, hence
 $$\wt k(\xi',\mu,t)=\exp\Big(-\frac{\tau(\xi',\mu)}{\spk{\xi',\mu}}t\Big).$$
If we choose the sector $\Sigma=\Sigma_\theta=\{\mu\mid |\mathrm{arg}\,\mu|<\theta\}$ with arbitrary $\theta<\pi/2$, then $\spk{\xi',\mu}/\tau(\xi',\mu)$ and $\tau(\xi',\mu)/\spk{\xi',\mu}$ are scalar symbols of order zero on $\rz^{n-1}\times\Sigma$ and $\Re \tau(\xi',\mu)/\spk{\xi',\mu}\gtrsim 1$. A simple calculation $($see the proof of Lemma \ref{lem:uniform} for a similar situation) then shows that $\wt k\in S^0(\rz^{n-1}\times\Sigma;\scrS(\rz_+))$, hence $k\in S^0_P(\rz^{n-1}\times\Sigma)$.
\end{example}
\subsection{Characterizations of Poisson operators}

\begin{theorem}\label{thm:poisson-characterization-01}
Let $Z$ be a Banach space and $d\in\rz$. For a smooth function $k:\rz^{n-1}\times\Sigma\times\rz_+\lra Z$ the following
are equivalent:
\begin{itemize}
 \item[a$)$] $k$ is a Poisson symbol-kernel of order $d$.
 \item[b$)$] For some $1\le p\le+\infty$  and all choices of $\ell,\ell'\in\nz_0$, $\alpha'\in\nz_0^{n-1}$, and $\beta\in\nz_0^2$,
\begin{align*}
   \Big\|x_n\mapsto
       x_n^\ell D_{x_n}^{\ell'} D^{\alpha'}_{\xi'} D^\beta_\mu k(\xi',\mu;x_n)
       \Big\|_{L^p(\rz_+;Z)} \lesssim
       \spk{\xi',\mu}^{d-\frac{1}{p}-\ell+\ell'-|\alpha'|-|\beta|}.
\end{align*}
 \item[c$)$] The statement of b$)$ holds true for every $1\le p\le +\infty$.
\end{itemize}
\end{theorem}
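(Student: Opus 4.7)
The plan is to prove the three conditions equivalent via the cyclic chain $(a)\Rightarrow(c)\Rightarrow(b)\Rightarrow(a)$; the middle implication is trivial. The two substantive directions both rest on the one-dimensional scaling $t = \spk{\xi',\mu}\, x_n$ that connects $k$ and $\wt k$ via $k(\xi',\mu;x_n) = \wt{k}(\xi',\mu; \spk{\xi',\mu}\,x_n)$, together with the elementary one-dimensional Sobolev embedding $\|f\|_{L^\infty(\rz_+)} \le C\bigl(\|f\|_{L^p(\rz_+)} + \|f'\|_{L^p(\rz_+)}\bigr)$, which is valid for every $p\in[1,+\infty]$.

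For $(a)\Rightarrow(c)$, I would expand $D_{x_n}^{\ell'}D_{\xi'}^{\alpha'}D_\mu^{\beta}k$ by Fa\`a di Bruno applied through this scaling: each summand has the schematic shape
\[
 c_\gamma(\xi',\mu)\, x_n^{r_\gamma}\,\bigl(D_t^{\ell'_\gamma}D_{\xi'}^{\alpha'_\gamma}D_\mu^{\beta_\gamma}\wt k\bigr)\bigl(\xi',\mu; \spk{\xi',\mu}x_n\bigr),
\]
where $c_\gamma$ is a classical symbol on $\rz^{n-1}\times\Sigma$ built from $\spk{\xi',\mu}$ and its derivatives, of order $m_\gamma = \ell'-|\alpha'|-|\beta|+r_\gamma+|\alpha'_\gamma|+|\beta_\gamma|$ (verifiable inductively from the base cases $D_{x_n}k = \spk{\xi',\mu}(D_t\wt k)$ and $D_{\xi'_j}[\wt k(\xi',\mu;\spk{\xi',\mu}x_n)] = (D_{\xi'_j}\wt k) + (\partial_{\xi'_j}\spk{\xi',\mu})\,x_n\,(D_t\wt k)$). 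Taking the $L^p_{x_n}$-norm after multiplying by $x_n^\ell$, substituting $t = \spk{\xi',\mu}x_n$ (which produces a factor $\spk{\xi',\mu}^{-\ell-r_\gamma-1/p}$), and using (a) to bound the resulting $L^p_t$-norms of weighted $\wt k$-derivatives by $\spk{\xi',\mu}^{d-|\alpha'_\gamma|-|\beta_\gamma|}$ (Schwartz seminorms dominate any $L^p$-norm), one collects the $\spk{\xi',\mu}$-exponents to exactly $d-1/p-\ell+\ell'-|\alpha'|-|\beta|$, valid for every $p\in[1,+\infty]$.

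For $(b)\Rightarrow(a)$, I would invert the same expansion: by induction on $|\alpha'|+|\beta|+\ell'$, one writes $t^{\ell}D_t^{\ell'}D_{\xi'}^{\alpha'}D_\mu^{\beta}\wt k(\xi',\mu;t)$ as a finite sum of terms of the form $d_\delta(\xi',\mu)\bigl[x_n^{\ell_\delta}D_{x_n}^{\ell'_\delta}D_{\xi'}^{\alpha'_\delta}D_\mu^{\beta_\delta}k(\xi',\mu;x_n)\bigr]_{x_n=t/\spk{\xi',\mu}}$ with symbolic coefficients of the corresponding balancing order. Undoing the substitution and applying (b) yields
\[
\bigl\|t^{\ell}D_t^{\ell'}D_{\xi'}^{\alpha'}D_\mu^{\beta}\wt k(\xi',\mu;\,\cdot\,)\bigr\|_{L^p(\rz_+;Z)} \;\lesssim\; \spk{\xi',\mu}^{d-|\alpha'|-|\beta|}
\]
for the fixed $p$ of hypothesis (b) and for all admissible $\ell,\ell',\alpha',\beta$. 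Applying the Sobolev embedding in $t$ to $f(t):=t^{\ell}D_t^{\ell'}D_{\xi'}^{\alpha'}D_\mu^{\beta}\wt k(\xi',\mu;t)$, whose $t$-derivative splits by Leibniz into two expressions of the same schematic type (with $\ell$ or $\ell'$ shifted by one), upgrades every such $L^p$-bound to the pointwise $L^\infty$-bound defining the Schwartz-symbol seminorm \eqref{eq:semi-norm}, which is exactly condition (a).

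The main obstacle in both directions is the chain-rule bookkeeping: verifying that the symbolic coefficients $c_\gamma$ and $d_\delta$ carry precisely the orders needed so that the $\spk{\xi',\mu}$-powers add up to the claimed exponent. Once the order formula for $m_\gamma$ has been pinned down by induction, the scaling, the Schwartz-symbol estimates and the Sobolev embedding all slot in as standard ingredients, and the two substantive implications run in parallel.
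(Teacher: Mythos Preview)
Your argument is correct and follows the same scaling/chain-rule computation as the paper for $(a)\Rightarrow(c)$; the difference lies in how you close the loop. The paper observes that $(c)$ with $p=+\infty$ immediately gives $(a)$ (since the seminorms \eqref{eq:semi-norm} are exactly the $L^\infty$ estimates after scaling) and then defers the remaining implication, namely that $(b)$ for a single $p$ implies $(b)$ for all $p$, to \cite[Lemma~4.5]{Hummel-Lindemulder22}. You instead prove $(b)\Rightarrow(a)$ directly: first the inverse chain-rule bookkeeping gives the $L^p$-bounds on $t^\ell D_t^{\ell'}D_{\xi'}^{\alpha'}D_\mu^\beta\wt k$, and then the one-dimensional Sobolev embedding $W^{1,p}(\rz_+)\hookrightarrow L^\infty(\rz_+)$ upgrades these to the $L^\infty$-seminorms. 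This makes your proof self-contained, at the cost of carrying out the symbolic order-counting twice (once in each direction), whereas the paper's route isolates the single-$p$-to-all-$p$ passage as a black box. The substance of the argument---the scaling $t=\spk{\xi',\mu}x_n$ and the fact that the chain-rule coefficients are symbols of exactly the right order---is identical.
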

\begin{proof}
Defining $\wt{k}(\xi',\mu;x_n)=k(\xi',\mu;\spk{\xi',\mu}^{-1}x_n)$, by direct calculation it is straightforward to verify that a$)$
implies c$)$ as well as that c$)$ with $p=+\infty$ implies a$)$.  The equivalence of b) and c) has been shown in
\cite[Lemma 4.5]{Hummel-Lindemulder22}.
\end{proof}

A second characterization relates Poisson operators to pseudodifferential operators.
For a precise formulation we need some definitions.

\begin{definition}
A symbol $p(\xi,\mu)\in S^d(\rz^n\times\Sigma;Z)$ is said to have the two-sided transmission property if $k_-,k_+\in S^d(\rz^{n-1}\times\Sigma;\scrS(\rz_+;Z))$, where
 $$k_\pm(\xi',\mu;x_n)
   :=\big[\scrF^{-1}_{\xi_n\to x_n}p(\xi',\spk{\xi',\mu}\xi_n,\mu)\big](\pm x_n).$$
The space of all such symbols is denoted by $S^d_{tr}(\rz^n\times\Sigma;Z)$.
\end{definition}

In more detail, in the previous definition $p(\xi',\spk{\xi',\mu}\xi_n,\mu)$ for each fixed $(\xi',\mu)$ is considered as a tempered
distribution on $\rz$; the transmission property requires that the inverse Fourier transform of this distribution restricted to both
positive and negative half-axis is a regular distribution with density from $\scrS(\rz_+;Z)$ and $\scrS(\rz_-,Z)$, respectively.
Again, the case $\Sigma=\emptyset$ is admitted, meaning that the symbols do not depend on the parameter.

\begin{definition}\label{def:two-sided-trace}
Let $X$ be a Banach space. The two-sided trace operator $\wt\gamma_0:\scrS(\rz^n;X)\to\scrS(\rz^{n-1};X)$ is defined by $(\wt\gamma_0 u)(x')=u(x',0)$.
\end{definition}

Let $\wt\gamma_0^*$ denote the dual operator of $\wt\gamma_0$, i.e.,
 $$(\wt\gamma_0^*T)(\varphi)=T(\wt\gamma_0\varphi),\qquad
     \varphi\in\scrS(\rz^n),\quad T\in\scrS'(\rz^{n-1};X);$$
equivalently, $\wt\gamma_0^*T=T\otimes\delta$ where $\delta$ is the usual delta distribution on $\rz$.

\begin{theorem}\label{thm:poisson-characterization-02}
The space of all $X$-valued Poisson operators $K_\mu$ of order $d$ coincides with the space of all operators of the form
 $$u\mapsto r_+\,p(D,\mu)\wt\gamma_0^*u=r_+\,p(D,\mu)(u\otimes\delta): \scrS(\rz^{n-1};X)\lra \scrS(\rz^n_+;X),$$
with $p\in S^{d-1}_{tr}(\rz^n\times\Sigma;\scrL(X))$ and where
$r_+$ denotes the operator of restriction from $\rz^n$ to $\rz^n_+$.
\end{theorem}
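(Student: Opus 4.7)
The plan is to analyze the correspondence between a pseudodifferential symbol $p$ and a Poisson symbol-kernel $k$ via the identity
\[ k(\xi',\mu;x_n) = \bigl[\scrF^{-1}_{\xi_n\to x_n}p(\xi',\cdot,\mu)\bigr](x_n), \qquad x_n>0. \]
Since $\widehat{u\otimes\delta}(\xi)=\wh u(\xi')$, applying Fubini to the defining integral of $p(D,\mu)$ expresses $r_+\,p(D,\mu)(u\otimes\delta)$ as a Poisson operator of the form \eqref{eq:poisson-operator} with this $k$. Thus the theorem reduces to showing that the displayed formula establishes a bijection between $S^{d-1}_{tr}$-symbols and $S^d_P$-symbol-kernels (modulo restriction to $x_n>0$).

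For the easy direction, starting from $p\in S^{d-1}_{tr}$, the substitution $\xi_n\mapsto\spk{\xi',\mu}\xi_n$ directly relates the inverse Fourier transform in $\xi_n$ to the function $k_+$ from the definition of the transmission property; an elementary change of variables yields $\wt k(\xi',\mu;t) = \spk{\xi',\mu}\,k_+(\xi',\mu;t)$ for $t>0$. Since $k_+\in S^{d-1}(\R^{n-1}\times\Sigma;\scrS(\R_+;\scrL(X)))$ by assumption, multiplying by the order-one factor $\spk{\xi',\mu}$ yields $\wt k\in S^d(\R^{n-1}\times\Sigma;\scrS(\R_+;\scrL(X)))$, that is, $k\in S^d_P$.

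For the converse, given $k\in S^d_P$, I would construct the required $p$ in two steps. First, extend $\wt k$, which is $\scrS(\R_+;\scrL(X))$-valued of order $d$, to a $\scrS(\R;\scrL(X))$-valued symbol $\wt K$ of the same order, by applying a Seeley-type extension operator fiberwise; its continuity as a map $\scrS(\R_+;Y)\to\scrS(\R;Y)$ for any Fréchet space $Y$ transfers all symbol bounds. Then set
\[ p(\xi',\xi_n,\mu) := \spk{\xi',\mu}^{-1}\,Q\bigl(\xi',\mu;\xi_n/\spk{\xi',\mu}\bigr), \qquad Q := \scrF_{t\to\eta}\wt K. \]
This choice is reverse-engineered so that substituting $\xi_n\mapsto\spk{\xi',\mu}\xi_n$ and inverting the Fourier transform in $\xi_n$ returns $\spk{\xi',\mu}^{-1}\wt K$; restricted to $\R_+$ this equals $\spk{\xi',\mu}^{-1}\wt k$, and the previous-paragraph computation then shows that the Poisson symbol-kernel of $p$ is exactly $k$.

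The final step is to verify $p\in S^{d-1}_{tr}$. The transmission property is built in: the kernels $k_\pm$ from its definition are $\spk{\xi',\mu}^{-1}\wt K|_{\R_\pm}$, manifestly in $S^{d-1}(\R^{n-1}\times\Sigma;\scrS(\R_\pm;\scrL(X)))$. For the pointwise symbol estimate $|\partial_\xi^\alpha\partial_\mu^\beta p|\lesssim\spk{\xi,\mu}^{d-1-|\alpha|-|\beta|}$, I would split into $|\xi_n|\le\spk{\xi',\mu}$, where $\spk{\xi,\mu}\sim\spk{\xi',\mu}$ and the bound is immediate, and $|\xi_n|>\spk{\xi',\mu}$, where $\spk{\xi,\mu}\sim|\xi_n|$ and the rapid decay of $Q(\xi',\mu;\eta)$ in $\eta$, evaluated at $\eta=\xi_n/\spk{\xi',\mu}$, supplies arbitrary negative powers of $|\xi_n|/\spk{\xi',\mu}$ that absorb the excess factors of $\spk{\xi',\mu}$ produced by the rescaling and its differentiations. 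The main obstacle is essentially bookkeeping---organizing these case-splits and chain-rule expansions so the Schwartz decay of $\wt K$ is visibly sufficient, and verifying that the Seeley extension behaves well on the parameter-dependent Fréchet-valued symbol class.
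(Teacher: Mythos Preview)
Your argument is correct and is essentially the standard route to this result. The paper itself does not supply a proof: it simply records that the scalar case is \cite[Proposition~4.1]{Johnsen96} and that the operator-valued extension is \cite[Lemma~4.13]{Hummel-Lindemulder22}. What you have sketched---the change of variables $\xi_n\mapsto\spk{\xi',\mu}\xi_n$ giving $\wt k=\spk{\xi',\mu}\,k_+$, and for the converse a Seeley extension of $\wt k$ followed by the rescaled definition of $p$ and the two-region estimate $|\xi_n|\lessgtr\spk{\xi',\mu}$---is precisely the content of those references, so you have reconstructed the proof rather than taken a different path. The only point worth a remark is your opening ``applying Fubini'': since $p$ may have positive order, the $\xi_n$-integral is a priori distributional, and it is exactly the transmission hypothesis that guarantees the restriction to $x_n>0$ is the smooth density $k_+$; you are implicitly using this, but it would be cleaner to say so.
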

\begin{proof}
The scalar case is known from \cite[Proposition 4.1]{Johnsen96}, for the extension to the operator-valued setting see \cite[Lemma 4.13]{Hummel-Lindemulder22}.
\end{proof}

\subsection{A first result on $R$-boundedness of Poisson operators}

The following result on the boundedness of Poisson operators in case $\sigma=0$ has been shown in \cite[Theorem 4.25]{Hummel-Lindemulder22}; it extends the corresponding
scalar-valued result of \cite[Theorem 4.3]{Johnsen96} to the Banach space valued setting. The case $\sigma\in\rz$ follows from the simple fact that the
considered Poisson operators commute with the pseudodifferential operator $\langle D'\rangle^{\sigma}$.

\begin{theorem}\label{thm:poisson-boundedness}
Let $1<p<+\infty$, $1\le q\le +\infty$, and $d,s,\sigma\in\rz$. Let $X$ be a Banach space.
Any $X$-valued Poisson operator $K=\op(k)$ of order $d$ extends to bounded operators
\begin{align*}
 K&:B^{s+\sigma}_{pq}(\rz^{n-1};X)\lra B^{s-d+\frac{1}{p},\sigma}_{pq}(\rz^n_+;X),\\
 K&:F^{s+\sigma}_{pp}(\rz^{n-1};X)\lra F^{s-d+\frac{1}{p},\sigma}_{pq}(\rz^n_+;X).
\end{align*}
Moreover, the following map is bilinear and continuous$:$
 $$(k,u)\mapsto \op(k)u:S^{d}_P(\rz^{n-1};\scrL(X))\times
     B^{s+\sigma}_{pq}(\rz^{n-1};X)\lra
     B^{s-d+\frac{1}{p},\sigma}_{pq}(\rz^n_+;X);$$
the analogous statement holds for the Triebel-Lizorkin spaces.
\end{theorem}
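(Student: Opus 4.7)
The plan is to reduce to the case $\sigma=0$, which for both the Besov and the Triebel--Lizorkin statement is precisely \cite[Theorem~4.25]{Hummel-Lindemulder22}. The reduction, already indicated in the remark preceding the theorem, rests on the fact that any $X$-valued Poisson operator $K=\op(k)(\mu)$ commutes with the tangential Fourier multiplier $\spk{D'}^{\sigma}$: for $g\in\scrS(\rz^{n-1};X)$,
\begin{equation*}
 \spk{D'}^{\sigma}\,\op(k)(\mu)g \;=\; \op(k)(\mu)\,\spk{D'}^{\sigma} g,
\end{equation*}
since both sides equal $x\mapsto\int e^{ix'\xi'}\spk{\xi'}^{\sigma}\,k(\xi',\mu;x_n)\,\wh{g}(\xi')\,\dbar\xi'$. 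At the symbol level the commutation is immediate: the scalar factor $\spk{\xi'}^{\sigma}$ is independent of $x_n$ and of course commutes with $k(\xi',\mu;x_n)\in\scrL(X)$. On the left $\spk{D'}^{\sigma}$ denotes the tangential multiplier on $\rz^n_+$, on the right the Fourier multiplier on $\rz^{n-1}$.

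By the definition \eqref{eq:weight} of the anisotropic scale, $\spk{D'}^{\sigma}$ induces isometric isomorphisms
\begin{equation*}
 \spk{D'}^{\sigma}: B^{s+\sigma}_{pq}(\rz^{n-1};X)\to B^{s}_{pq}(\rz^{n-1};X), \qquad
 \spk{D'}^{\sigma}: B^{s-d+1/p,\sigma}_{pq}(\rz^{n}_+;X)\to B^{s-d+1/p}_{pq}(\rz^{n}_+;X),
\end{equation*}
and analogously in the $F$-scale. Consequently, boundedness of $K$ between the anisotropic spaces is equivalent to boundedness of $\spk{D'}^{\sigma} K = K\,\spk{D'}^{\sigma}$ from $B^{s+\sigma}_{pq}(\rz^{n-1};X)$ to $B^{s-d+1/p}_{pq}(\rz^{n}_+;X)$, which in turn factors as the source-side isomorphism $\spk{D'}^{\sigma}$ followed by the $\sigma=0$ case of \cite[Theorem~4.25]{Hummel-Lindemulder22}. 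The Triebel--Lizorkin statement follows by the identical factorisation.

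The bilinear continuity transports in the same manner: the $\sigma=0$ version of \cite[Theorem~4.25]{Hummel-Lindemulder22} already yields an estimate of the form $\|\op(k)u\|_{B^{s-d+1/p}_{pq}}\lesssim \trinorm{k}_{d,(N)}\|u\|_{B^{s}_{pq}}$ for some $N\in\nz$, and the lifts $\spk{D'}^{\sigma}$ on source and target are linear, continuous, and independent of $k$, so the estimate immediately transports to $\|\op(k)u\|_{B^{s-d+1/p,\sigma}_{pq}}\lesssim \trinorm{k}_{d,(N)}\|u\|_{B^{s+\sigma}_{pq}}$. The only bit of bookkeeping is to check that the commutation extends from $\scrS(\rz^{n-1};X)$ to the full source space, which follows either by density (for $q<\infty$) or directly from the symbol formula and the boundedness supplied by the $\sigma=0$ case applied to $\spk{D'}^{\sigma}g$; no analytic obstacle remains beyond the $\sigma=0$ result imported from \cite{Hummel-Lindemulder22}.
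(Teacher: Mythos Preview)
Your proposal is correct and follows exactly the approach indicated in the paper: the paper does not give a formal proof but states (in the paragraph preceding the theorem) that the case $\sigma=0$ is \cite[Theorem~4.25]{Hummel-Lindemulder22} and that the general case follows from the commutation of Poisson operators with $\langle D'\rangle^{\sigma}$. Your write-up merely spells out this reduction in detail; the only cosmetic point is that on the source side $\langle D'\rangle^{\sigma}:B^{s+\sigma}_{pq}(\rz^{n-1};X)\to B^{s}_{pq}(\rz^{n-1};X)$ is an isomorphism rather than an isometry in general, but this is irrelevant for the argument.
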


We now present a first result on $R$-boundedness of Poisson operators.
It states that sets which are bounded in the topology of Poisson operators are
$R$-bounded as subsets of bounded operators $($in the sense of Theorem
\ref{thm:poisson-boundedness}).

\begin{theorem}\label{thm:poisson-rbounded-01}
Let $1<p<+\infty$, $1\le q\le +\infty$, and $d,s,\sigma\in\rz$.
Moreover, let $\scrK$ be a bounded subset of $S_P^{d}(\rz^{n-1})$. Then
$\{\op(k)\mid k\in\scrK\}$ is an $R$-bounded subset of both
$\scrL(B^{s+\sigma}_{pq}(\rz^{n-1}), B^{s-d+\frac{1}{p},\sigma}_{pq}(\rz^n_+))$ and
$\scrL(F^{s+\sigma}_{pp}(\rz^{n-1}), F^{s-d+\frac{1}{p},\sigma}_{pq}(\rz^n_+))$ $($where $q<+\infty$ in the latter case$)$.
\end{theorem}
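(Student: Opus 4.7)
The plan is to reduce $R$-boundedness to an ordinary boundedness statement for a single vector-valued Poisson operator, exploiting the isomorphism $\varepsilon(B^{s,\sigma}_{pq}(\rz^n))\cong B^{s,\sigma}_{pq}(\rz^n;\ell^2(\cz))$ from Corollary~\ref{cor:interpolation} together with Remark~\ref{rem:rbounded-bounded} and the Banach-space-valued mapping property of Theorem~\ref{thm:poisson-boundedness}. In this way the whole statement is obtained by lifting scalar symbol-kernels diagonally to $\ell^2$-valued ones.

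Given any finite selection $k_1,\ldots,k_N\in\scrK$, I would associate to it the $\scrL(\ell^2(\cz))$-valued symbol-kernel
\[
 \mathbf{k}(\xi';x_n)(c_j)_{j\in\nz}:=\big(k_j(\xi';x_n)\,c_j\big)_{1\le j\le N},
\]
extended by zero on the coordinates with $j>N$. The first step is to verify that $\mathbf{k}$ belongs to $S^d_P(\rz^{n-1};\scrL(\ell^2(\cz)))$ with semi-norms bounded purely in terms of the semi-norms $\trinorm{\cdot}_{d,(M)}$ of the scalar kernels. Because $\mathbf{k}$ acts diagonally, one has
\[
 \|t^\ell D^{\ell'}_t D^{\alpha'}_{\xi'}\widetilde{\mathbf{k}}(\xi';t)\|_{\scrL(\ell^2)}
 =\max_{1\le j\le N}|t^\ell D^{\ell'}_t D^{\alpha'}_{\xi'}\widetilde{k_j}(\xi';t)|,
\]
so the required estimates for $\mathbf{k}$ follow from the uniform bound on $\scrK$ in $S^d_P(\rz^{n-1})$, and the resulting constant is independent of $N$ and of the particular finite selection from $\scrK$.

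The second step applies Theorem~\ref{thm:poisson-boundedness} to $\mathbf{k}$ with Banach space $X=\ell^2(\cz)$: the operator $\op(\mathbf{k})$ is bounded from $B^{s+\sigma}_{pq}(\rz^{n-1};\ell^2(\cz))$ into $B^{s-d+\frac{1}{p},\sigma}_{pq}(\rz^n_+;\ell^2(\cz))$, and analogously between the corresponding Triebel-Lizorkin spaces. By the bilinear continuity statement in the theorem, the operator norm is controlled by a single semi-norm $\trinorm{\mathbf{k}}_{d,(M)}$, hence by a constant that depends only on the bound of $\scrK$. The third step is the translation back: under the identification of Corollary~\ref{cor:interpolation}, an element of $\varepsilon(B^{s+\sigma}_{pq}(\rz^{n-1}))$ corresponds to a sequence $(g_j)\in B^{s+\sigma}_{pq}(\rz^{n-1};\ell^2(\cz))$, and by the diagonal structure of $\mathbf{k}$ one has $\op(\mathbf{k})(g_j)=(\op(k_j)g_j)$ entrywise; combined with Remark~\ref{rem:rbounded-bounded}, the uniform boundedness of $\op(\mathbf{k})$ is precisely the $R$-boundedness of $\{\op(k)\mid k\in\scrK\}$ in the claimed operator spaces. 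The Triebel-Lizorkin case is fully parallel, using that $F^{s+\sigma}_{pp}=B^{s+\sigma}_{pp}$ on the source side and the identification of $\varepsilon(F^{s-d+1/p,\sigma}_{pq})$ on the target side, which is the reason for the restriction $q<+\infty$.

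I do not foresee a substantial obstacle beyond the bookkeeping of semi-norms in the first step; the whole argument rests on the essentially trivial observation that a diagonal lifting of a bounded family of scalar Poisson symbol-kernels to an $\ell^2$-valued symbol-kernel preserves the $S^d_P$-semi-norms, combined with vector-valued boundedness results already available in the literature via Theorem~\ref{thm:poisson-boundedness}.
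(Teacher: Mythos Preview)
Your proposal is correct and follows essentially the same approach as the paper's own proof: both lift the finite selection $k_1,\ldots,k_N$ to a diagonal $\scrL(\ell^2(\cz))$-valued symbol-kernel (the paper does this via the isometric embedding $\ell^\infty(\cz)\hookrightarrow\scrL(\ell^2(\cz))$, which is exactly your observation that the diagonal operator norm is the maximum of the scalar entries), then invoke Theorem~\ref{thm:poisson-boundedness} with $X=\ell^2(\cz)$ together with Corollary~\ref{cor:interpolation} and Remark~\ref{rem:rbounded-bounded}.
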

\begin{proof}
We shall apply Remark \ref{rem:rbounded-bounded}. To this end let
$\mathbf{k}:=(k_1,\ldots,k_L,0,0,0,\ldots)$ with arbitrary $L\in\nz$ and
arbitrary $k_j\in\scrK$. Then
 $$\mathbf{k}\in S^{d}_P(\rz^{n-1};\ell^\infty(\cz))
    \subset S^{d}_P(\rz^{n-1};\scrL(\ell^2(\cz)));$$
the latter inclusion is induced by the canonical isometric embedding of
$\ell^\infty(\cz)$ into $\scrL(\ell^2(\cz))$.
Moreover, for every $N\in\nz$,
\begin{align}\label{eq:embedding-01}
 \|\mathbf{k}\|_{d,(N)}\le\max_{j=1}^L \|k_j\|_{d,(N)}
    \le C_N:=\sup_{k\in\scrK}\|k\|_{d,(N)},
\end{align}
where $\|\cdot\|_{d,(N)}$ denotes the norms defining the topology of
Poisson symbols as explained around Definition \ref{def:poisson-symbols}
$($with the obvious choice(s) for the Banach space $Z$ in the definition
of $\|\cdot\|_{d,(N)}$,
namely $Z=\scrL(\ell^2(\cz))$
on the left-hand side and $Z=\cz$ on the right-hand side of \eqref{eq:embedding-01}$)$. Since, by Corollary \ref{cor:interpolation},
\begin{align*}
 \varepsilon(B^{s+\sigma}_{pq}(\rz^{n-1}))
  &\cong B^{s+\sigma}_{pq}(\rz^{n-1};\ell^2(\cz)),\\
 \varepsilon(B^{s-d+\frac{1}{p},\sigma}_{pq}(\rz^n_+))
  &\cong B^{s-d+\frac{1}{p},\sigma}_{pq}(\rz^n_+;\ell^2(\cz)),
\end{align*}
it follows from Theorem \ref{thm:poisson-boundedness} that $\op(\mathbf{k})=(\op(k_1),\ldots,\op(k_L),0,0,0,\ldots)$ is a bounded operator from $\varepsilon(B^{s+\sigma+d-\frac{1}{p}}_{pq}(\rz^{n-1}))$ to
$\varepsilon(B^{s+\sigma}_{pq}(\rz^n_+))$ and
 $$\|\op(\mathbf{k})\|_{\scrL\big(\varepsilon(B^{s+\sigma+d-\frac{1}{p}}_{pq}(\rz^{n-1})),\varepsilon(B^{s,\sigma}_{pq}(\rz^n_+))\big)}\le C$$
with a constant $C$ that neither depends on $L$ nor on the choice of
the $k_j$. This proves the claim for the scale of Besov spaces; the claim for the scale
of Triebel-Lizorkin spaces is verified in the same way.
\end{proof}

\section{$R$-boundedness of parameter-dependent Poisson operators}\label{sec:04}

Starting out from a parameter-dependent Poisson operator $K_\mu$ it is our aim to obtain the $R$-boundedness of $\{|\mu|^\rho K_\mu\mid \mu\in\Sigma\}$ in $\scrL(X,Y)$ for suitable exponents $\rho$ and spaces $X$ and $Y$. It seems natural to apply Theorem \ref{thm:poisson-rbounded-01} in this context. However,
given a symbol-kernel $k(\xi',\mu;x_n)\in S^d_P(\rz^{n-1}\times\Sigma)$ and setting
 $$k_\mu(\xi';x_n)=k(\xi',\mu;x_n),\qquad \mu\in\Sigma,$$
then $k_\mu\in S^d_P(\rz^{n-1})$ but, in general, there is no order $d'$ and no exponent $\rho$ such that $\{|\mu|^\rho k_\mu\mid \mu\in\Sigma\}$ is a bounded subset of $S^{d'}_P(\rz^{n-1})$. In fact, using Theorem \ref{thm:poisson-characterization-01}, we can only assure that
\begin{align*}
   \big\|x_n\mapsto
       x_n^\ell D_{x_n}^{\ell'} k_\mu(\xi';x_n)
       \big\|_{L^p(\rz_+)} \lesssim
       \spk{\xi',\mu}^{d-\frac{1}{p}-\ell+\ell'},
\end{align*}
which has arbitrarily bad growth in $\mu$ as $|\mu|\to+\infty$, since $-\ell+\ell'$ can become arbitrarily large. For this reason, we need to proceed in a different way.

\subsection{$R$-boundedness in Besov spaces}

\begin{lemma}\label{lem:r-boundedness-pseudo}
Let $p(\xi,\mu)\in S^{d}(\rz^n\times\Sigma)$ with $d\le0$ and let $s\in\rz$, $0\le\theta\le 1$.
Moreover, let $1<p<+\infty$ and $1\le q\le+\infty$.
Then $\big\{\spk{\mu}^{-\theta d} p(D,\mu)\mid\mu\in\Sigma\big\}$
is an $R$-bounded subset of
\begin{align*}
\scrL\big(H^{s}_{p}(\rz^n),H^{s-(1-\theta)d}_{p}(\rz^n)\big).
\end{align*}
The analogous statement is true if we replace $H^s_p$ by $B^s_{pq}$ or $F^s_{pq}$.
\end{lemma}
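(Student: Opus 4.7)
The plan is to reduce to the case $s=0$ of operators acting on $L^p$ and then to invoke Mikhlin's multiplier theorem in its $R$-bounded form (Corollary~\ref{cor:Mikhlin}).

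First I would exploit the lifting property of $\spk{D}^r$. This operator is a Fourier multiplier, so it commutes with $p(D,\mu)$, and it induces topological isomorphisms $\spk{D}^r\colon H^s_p(\rz^n)\to H^{s-r}_p(\rz^n)$ (and likewise on the Besov and Triebel--Lizorkin scales). Consequently, $\{\spk{\mu}^{-\theta d}\, p(D,\mu)\}_{\mu\in\Sigma}$ is $R$-bounded in $\scrL(H^s_p(\rz^n),H^{s-(1-\theta)d}_p(\rz^n))$ if and only if the family $\{q(D,\mu)\}_{\mu\in\Sigma}$ is $R$-bounded in $\scrL(L^p(\rz^n))$, where
\[
q(\xi,\mu):=\spk{\xi}^{-(1-\theta)d}\,\spk{\mu}^{-\theta d}\,p(\xi,\mu),
\]
and the same reduction works verbatim on the $B^s_{pq}$ and $F^s_{pq}$ scales.

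Next I would check that $\{q(\cdot,\mu)\}_{\mu\in\Sigma}$ is a bounded subset of $\calF(\scrL(\cz))$. The crux is the elementary interpolation inequality
\[
\spk{\xi,\mu}^{d}\le \spk{\xi}^{(1-\theta)d}\spk{\mu}^{\theta d},
\]
valid for $d\le 0$ and $0\le\theta\le 1$ since $\spk{\xi,\mu}\ge\spk{\xi}$ and $\spk{\xi,\mu}\ge\spk{\mu}$. Combined with $\spk{\xi,\mu}^{-|\gamma|}\le\spk{\xi}^{-|\gamma|}$, it yields $\spk{\xi,\mu}^{d-|\gamma|}\le \spk{\xi}^{(1-\theta)d-|\gamma|}\,\spk{\mu}^{\theta d}$. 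A Leibniz expansion of $\partial^\alpha_\xi q$, together with the standard symbol estimates $|\partial^\beta_\xi\spk{\xi}^{-(1-\theta)d}|\lesssim \spk{\xi}^{-(1-\theta)d-|\beta|}$ and $|\partial^\gamma_\xi p(\xi,\mu)|\lesssim \spk{\xi,\mu}^{d-|\gamma|}$, then gives
\[
|\partial^\alpha_\xi q(\xi,\mu)|\lesssim \sum_{\beta+\gamma=\alpha} \spk{\xi}^{-(1-\theta)d-|\beta|}\,\spk{\mu}^{-\theta d}\,\spk{\xi,\mu}^{d-|\gamma|}\lesssim \spk{\xi}^{-|\alpha|}
\]
uniformly in $\mu\in\Sigma$; since $|\xi|\le \spk{\xi}$, this is the Mikhlin bound.

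With this at hand, Corollary~\ref{cor:Mikhlin} applied with the Hilbert space $H=\cz$ delivers the $R$-boundedness of $\{q(D,\mu)\}_{\mu\in\Sigma}$ on $L^p(\rz^n)$, on $B^0_{pq}(\rz^n)$ and on $F^0_{pq}(\rz^n)$; the lifting step then undoes the reduction and yields the three asserted $R$-bounds. The only piece of genuine content is the symbol estimate above: it fixes the exponent $(1-\theta)d$ of the smoothing gain and pinpoints exactly where the assumptions $d\le 0$ and $0\le\theta\le 1$ are used.
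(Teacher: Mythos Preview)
Your proof is correct and follows essentially the same route as the paper: factor $\spk{\mu}^{-\theta d}p(D,\mu)=\spk{D}^{(1-\theta)d}q(D,\mu)$, use the key inequality $\spk{\xi,\mu}^{d-|\gamma|}\le\spk{\mu}^{\theta d}\spk{\xi}^{(1-\theta)d-|\gamma|}$ together with the Leibniz rule to bound the Mikhlin norm of $q(\cdot,\mu)$ uniformly in $\mu$, and conclude via Corollary~\ref{cor:Mikhlin}. The only cosmetic difference is that you first reduce to $s=0$ by lifting, whereas the paper applies Corollary~\ref{cor:Mikhlin} directly on $H^s_p$ (the corollary already covers all $s$), but this changes nothing of substance.
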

\begin{proof}
Write $p(D,\mu)=\spk{D}^{(1-\theta)d}q_\mu(D)$ with $q_\mu(\xi)=p(\xi,\mu)\spk{\xi}^{-(1-\theta)d}$.
Using
 $$\spk{\xi,\mu}^{d-|\beta|}=\spk{\xi,\mu}^{\theta d}\spk{\xi,\mu}^{(1-\theta)d-|\beta|}\le\spk{\mu}^{\theta d}
    \spk{\xi}^{(1-\theta)d-|\beta|}$$
and product rule, it is easy to see that
 $$|D^\alpha_\xi q_\mu(\xi)|\lesssim \spk{\mu}^{\theta d}\spk{\xi}^{-|\alpha|},\qquad
    \alpha\in\nz_0^n.$$
Therefore, $\big\{\spk{\mu}^{-\theta d}q_\mu\mid\mu\in\Sigma\big\}$ is a bounded subset of $S^{0}(\rz^n)$.
The result then follows from Mikhlin's theorem, see Corollary \ref{cor:Mikhlin}.
\end{proof}

In an analogous way one shows the following result:

\begin{remark}\label{lem:r-boundedness-pseudo-02}
Let $p(\xi,\mu)\in S^d(\rz^n\times\Sigma)$ with $d>0$ and $s\in\rz$, $1<p<+\infty$, $1\le q\le+\infty$. Then
$\left\{\spk{\mu}^{-d} p(D,\mu)\mid\mu\in\Sigma\right\}$
is an $R$-bounded subset of $\scrL(H^s_{p}(\rz^n),H^{s-d}_{p}(\rz^n))$, $\scrL(B^s_{pq}(\rz^n),B^{s-d}_{pq}(\rz^n))$,
and $\scrL(F^s_{pq}(\rz^n),F^{s-d}_{pq}(\rz^n))$, respectively.
\end{remark}

Recall the two-sided trace operator $\wt\gamma_0$ from Definition \ref{def:two-sided-trace} and its adjoint.
It is known, see \cite[Proposition 2.6]{Johnsen96} for example, that
$\wt\gamma_0^*$ induces continuous operators
 $$
   B^s_{pq}(\rz^{n-1})\lra B^{s-1+\frac1p}_{pq}(\rz^{n}),\qquad
   F^s_{pp}(\rz^{n-1})\lra F^{s-1+\frac1p}_{pq}(\rz^{n})
 $$
for every $s<0$ and $1\le p,q\le +\infty$ (and $p<+\infty$ in case of the Triebel-Lizorkin spaces).
Combining this with the previous Lemma \ref{lem:r-boundedness-pseudo} we find:

\begin{theorem}\label{thm:r-boundedness-negative-s}
Let $s<0$ and $1<p<+\infty$. Let $p(\xi,\mu)\in S^{d-1}(\rz^n\times\Sigma)$ with $d\le0$ and let $0\le r,\theta\le1$. Then
 $$\big\{\spk{\mu}^{-\theta d+\frac rp} r_+p(D,\mu)\wt\gamma_0^*\mid\mu\in\Sigma\big\}$$
is an $R$-bounded subset of
\begin{equation}\label{eq:intersection}
\begin{split}
 \scrL&\big(B^s_{pq}(\rz^{n-1}),B^{s-(1-\theta)d+\frac{1-r}{p}}_{pq}(\rz^n_+)\big)
   \qquad (1\le q\le+\infty),\\
 \scrL&\big(F^s_{pp}(\rz^{n-1}),F^{s-(1-\theta)d+\frac{1-r}{p}}_{pq}(\rz^n_+)\big)
   \qquad (1< q<+\infty).
\end{split}
\end{equation}
In particular, due to Theorem $\ref{thm:poisson-characterization-02}$, if $K_\mu=\op(k)(\mu)$ is a Poisson operator with symbol-kernel $k\in S^d_P(\rz^{n-1}\times\Sigma)$,  $d\le0$, then $\{\spk{\mu}^{-\theta d+\frac rp}K_\mu\mid\mu\in\Sigma\}$ is an $R$-bounded subset of the spaces in \eqref{eq:intersection}.
\end{theorem}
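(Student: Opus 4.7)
The plan is to invoke Theorem \ref{thm:poisson-characterization-02} so that every Poisson operator of order $d$ can be written in the form $r_+ p(D,\mu) \wt\gamma_0^*$ for some $p \in S^{d-1}_{tr}(\rz^n \times \Sigma)$. The strategy is then to factor this composition into three pieces and to control each piece separately, with the entire parameter dependence absorbed into the middle (pseudodifferential) factor.

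For the outer factors, I would use the continuity $\wt\gamma_0^*: B^s_{pq}(\rz^{n-1}) \to B^{s-1+1/p}_{pq}(\rz^n)$ (and the analogous statement $F^s_{pp} \to F^{s-1+1/p}_{pq}$), recalled just before the statement and valid because $s<0$, together with the standard boundedness of the restriction $r_+$ between spaces on $\rz^n$ and $\rz^n_+$. Each of these is a single parameter-independent map, hence a trivially $R$-bounded family; they contribute only harmless multiplicative constants to the final $R$-bound.

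The heart of the argument is to apply Lemma \ref{lem:r-boundedness-pseudo} to $p(D,\mu)$, whose symbol lies in $S^{d-1}$ (of non-positive order, since $d \le 0$), with the auxiliary exponent
\[ \theta' := \frac{r/p - \theta d}{1-d}. \]
A short verification confirms $\theta' \in [0,1]$: the numerator is non-negative because $-\theta d \ge 0$ and $r \ge 0$, while $\theta' \le 1$ follows from $r/p \le 1/p \le 1 \le 1-(1-\theta)d$, both using $d \le 0$. With this choice, Lemma \ref{lem:r-boundedness-pseudo} yields $R$-boundedness of $\{\spk{\mu}^{-\theta'(d-1)} p(D,\mu) : \mu \in \Sigma\}$ from $B^{s-1+1/p}_{pq}(\rz^n)$ into $B^{s-1+1/p-(1-\theta')(d-1)}_{pq}(\rz^n)$. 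An elementary algebraic identity shows $-\theta'(d-1) = -\theta d + r/p$ and $s-1+1/p-(1-\theta')(d-1) = s-(1-\theta)d + (1-r)/p$, which are exactly the exponents demanded by the theorem.

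Composing the three $R$-bounded families and using the submultiplicativity of the $R$-bound under composition (noted after the definition of $R$-boundedness) completes the Besov case; the Triebel-Lizorkin case is handled identically. The main obstacle is not a hidden technical difficulty but the bookkeeping needed to split the total smoothness loss $1-d$ between the $\mu$-weight and the target regularity through a single parameter $\theta'$, and to check that $\theta' \in [0,1]$ uniformly in $(r,\theta) \in [0,1]^2$; the sign constraint $d \le 0$ is precisely what makes this possible.
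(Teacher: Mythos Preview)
Your proof is correct and follows essentially the same approach as the paper's: factor $r_+ p(D,\mu)\wt\gamma_0^*$, use the boundedness of $\wt\gamma_0^*$ and $r_+$, and apply Lemma~\ref{lem:r-boundedness-pseudo} with the auxiliary parameter $\theta'=\frac{r/p-\theta d}{1-d}=\frac{\theta d - r/p}{d-1}$. Your version is in fact slightly more complete, since you explicitly verify $\theta'\in[0,1]$ (which the paper leaves implicit) and spell out the composition argument.
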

\begin{proof}
The proof is the same for both scales of spaces; hence let us focus on the Besov spaces. Since $p$ has order $d-1$, by
Lemma \ref{lem:r-boundedness-pseudo} $\big\{\spk{\mu}^{-\theta'(d-1)} p(D,\mu)\mid\mu\in\Sigma\big\}$ is an $R$-bounded subset of $\scrL\big(B^{s-1+\frac1p}_{pq}(\rz^{n}),B^{s-1+\frac1p-(1-\theta')(d-1)}_{pq}(\rz^n)\big)$ for every $0\le\theta'\le 1$.
If $\theta'=\frac{\theta d-\frac rp}{d-1}$ then $-\theta'(d-1)=-\theta d+\frac rp$ and the claim follows.
\end{proof}

Let us now turn to the case of non-negative regularity $s\ge0$.

\forget{
Note that the previous theorem is not true for $s>0$, even not in the case $p=q=2$ when $R$-boundedness coincides with usual boundedness, see the following example; we shall study the case $s\ge 0$ in Section \ref{sec:classical-sobolev}.
The validity of Theorem \ref{thm:r-boundedness-negative-s} in case $s=0$ however will remain an open problem.
}

\begin{example}
Consider the parameter-dependent Poisson operator $K_\mu$ associated with the Dirichlet-Laplacian as described in Section $\ref{subsec:example}$. If we consider only $\mu\in(0,+\infty)$, the symbol-kernel of $K_\mu$ is $k(\xi',\mu,x_n)=e^{-\spk{\xi',\mu}x_n}$; it has order $d=0$. Let us determine the operator-norm of
 $$K_\mu:H^s_2(\rz^{n-1})\lra H^{s+\frac{1-r}{2}}_2(\rz^n_+),\qquad \mu\ge 0,$$
with $0\le s<1/2$ and $0\le r\le1$.
We shall use the fact that
 $$\|v\|^2_{H^{s+(1-r)/2}_2(\rz^n_+)}\cong \int\spk{\xi'}^{2s+1-r}
   \big\|\spk{\xi'}^{-1/2}\wh{v}(\xi',\spk{\xi'}^{-1}x_n)\big\|^2_{H^{s+(1-r)/2}_2(\rz_{+,x_n})}\,d\xi'.$$
For $v=K_\mu u$ this yields
\begin{align*}
 \|K_\mu u\|^2_{H^{s+(1-r)/2}_2(\rz^n_+)}&\cong \int\spk{\xi'}^{2s-r}
   \big\|e^{-\spk{\xi',\mu}\spk{\xi'}^{-1}x_n}\wh{u}(\xi')\big\|^2_{H^{s+(1-r)/2}_2(\rz_{+,x_n})}\,d\xi'\\
 &=\int \spk{\xi'}^{-r}\big\|e^{-\spk{\xi',\mu}\spk{\xi'}^{-1}x_n}\big\|^2_{H^{s+(1-r)/2}_2(\rz_{+,x_n})}
   \spk{\xi'}^{2s}|\wh{u}(\xi')|^2\,d\xi'.
\end{align*}
It follows that
 $$\|K_\mu\|_{\scrL(H^s_2(\rz^{n-1}),H^{s+(1-r)/2}_2(\rz^n_+))}\cong
   \sup_{\xi'\in\rz^{n-1}}\spk{\xi'}^{-\frac r2}
   \big\|e^{-\spk{\xi',\mu}\spk{\xi'}^{-1}x_n}\big\|_{H^{s+(1-r)/2}_2(\rz_{+,x_n})}.$$
Let us for brevity set $a=a(\xi',\mu):=\spk{\xi',\mu}\spk{\xi'}^{-1}$. It is known, cf. \cite[Lemma 5.2]{DiNezza-Palatucci-Valdinoci}, that the even extension of functions induces a continuous extension operator $H^{t}_2(\rz_+)\to H^{t}_2(\rz)$ for $0\le t<1$. Hence we have
\begin{align*}
  \big\|e^{-ax_n}\big\|_{H^{s+(1-r)/2}_2(\rz_{+,x_n})}\cong \big\|e^{-a|x_n|}\big\|_{H^{s+(1-r)/2}_2(\rz_{x_n})}
\end{align*}
Since the Fourier transform of $e^{-|x_n|}$ coincides with $2\spk{\xi_n}^{-2}$,
\begin{align*}
  \big\|e^{-a|x_n|}\big\|^2_{H^{s+(1-r)/2}_2(\rz_{x_n})}
  &\cong \int \spk{\xi_n}^{2s+1-r}\Big|\frac{\spk{\xi_n/a}^{-2}}{a}\Big|^2\,d\xi_n\\
  &= \frac{1}{a}\int \spk{a\xi_n}^{2s+1-r}\spk{\xi_n}^{-4}\,d\xi_n
  \cong a^{2s-r};
\end{align*}
the latter equivalence is true because
 $$\spk{a\xi_n}^2=1+a^2\xi_n^2\le a^2+a^2\xi_n^2=a^2\spk{\xi_n}^2,
    \qquad \xi_n\in\rz,$$
and
 $$\spk{a\xi_n}^2=1+a^2\xi_n^2\ge a^2,\qquad |\xi_n|\ge 1.$$
Hence
 $$\|K_\mu\|_{\scrL(H^s_2(\rz^{n-1}),H^{s+(1-r)/2}_2(\rz^n_+))}
    \cong\sup_{\xi'}\spk{\xi',\mu}^{s-\frac r2}\spk{\xi'}^{-s}
    =\spk{\mu}^{s-\frac r2}.$$
\end{example}

Based on the previous example we seek a version of Theorem \ref{thm:r-boundedness-negative-s} for $s\ge0$, where the factor $\spk{\mu}^{-\theta d+\frac rp}$ is replaced by $\spk{\mu}^{-s-\theta d+\frac rp}$. It will remain an open problem whether such a theorem is true; however, in case $p=q$, we can demonstrate a slightly weaker version with an ``$\eps$-loss" in the decay in $\mu$:

\begin{theorem}\label{thm:r-boundedness-positiv-s}
Let $2<p<+\infty$, $s\ge0$, $0\le\theta,r\le1$, and $K_\mu=\op(k)(\mu)$ with $k\in S^d_P(\rz^{n-1}\times\Sigma)$ of order $d\le0$. If $\eps>0$ is arbitrary, then
 $$\{\spk{\mu}^{s-\theta d+\frac rp-\eps}K_\mu\mid\mu\in\Sigma\}$$
is an $R$-bounded subset of
 $$\scrL\big(B^s_{pp}(\rz^{n-1}),H^{s-(1-\theta)d+\frac{1-r}{p}}_{p}(\rz^n_+)\big).$$
\end{theorem}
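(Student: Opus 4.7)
The strategy is to reduce the case $s\ge 0$ to the negative-regularity Theorem~\ref{thm:r-boundedness-negative-s} by pre-composing $K_\mu$ with a $\mu$-dependent lifting that raises the effective order of the symbol-kernel. The assumption $p>2$ enters only through the replacement of the Bessel target by the slightly stronger Besov target, and the $\eps$-loss arises from shifting the source Besov index from $s$ to a slightly negative value $-\eps_0$.

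\textbf{Step 1 (reduction to a Besov target).} For $p>2$ the embedding $F^{t}_{pp}(\rz^n_+)\hookrightarrow F^{t}_{p,2}(\rz^n_+)=H^{t}_p(\rz^n_+)$ is continuous for every $t\in\rz$, so it suffices to show that $\{\spk{\mu}^{-s-\theta d+r/p-\eps}K_\mu\}$ is $R$-bounded in $\scrL\big(B^s_{pp}(\rz^{n-1}),B^{s+\alpha}_{pp}(\rz^n_+)\big)$, where $\alpha:=-(1-\theta)d+(1-r)/p$. (Here I read the weight as $\spk{\mu}^{-s-\theta d+r/p-\eps}$, matching the motivating sentence immediately before the theorem; a literal reading of the statement would require an extra change of sign on $s$.)

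\textbf{Step 2 (order reduction).} For any $a\ge 0$ the product rule applied to the semi-norms \eqref{eq:semi-norm}, combined with the elementary estimate $|D^{\alpha'}_{\xi'}D^{\beta}_\mu\spk{\xi',\mu}^{-a}|\lesssim\spk{\xi',\mu}^{-a-|\alpha'|-|\beta|}$, shows that
\[ k_a(\xi',\mu;x_n):=\spk{\xi',\mu}^{-a}\,k(\xi',\mu;x_n)\in S^{d-a}_P(\rz^{n-1}\times\Sigma).\]
Hence $\wt K_\mu:=\op(k_a)(\mu)$ is a Poisson operator of order $d-a\le 0$, and by inspection
\[ K_\mu=\wt K_\mu\circ\spk{D',\mu}^{a},\]
where $\spk{D',\mu}^a$ is the Fourier multiplier in $x'$ with symbol $\spk{\xi',\mu}^a\in S^a(\rz^{n-1}\times\Sigma)$.

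\textbf{Step 3 (main estimate).} Fix $\eps_0\in(0,\eps]$ and, for $\theta<1$, set $a:=(s+\eps_0)/(1-\theta)$. Theorem~\ref{thm:r-boundedness-negative-s} applied to $\wt K_\mu$ at source index $s_0=-\eps_0$ with exponents $(\theta,r)$ yields the target
\[-\eps_0-(1-\theta)(d-a)+(1-r)/p\;=\;-\eps_0+(1-\theta)a+\alpha\;=\;s+\alpha\]
by the choice of $a$, so
\[ \big\{\spk{\mu}^{-\theta(d-a)+r/p}\,\wt K_\mu:\mu\in\Sigma\big\}\subset\scrL\!\big(B^{-\eps_0}_{pp}(\rz^{n-1}),\,B^{s+\alpha}_{pp}(\rz^n_+)\big)\]
is $R$-bounded. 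Simultaneously, Remark~\ref{lem:r-boundedness-pseudo-02} applied to $\spk{\xi',\mu}^a\in S^a(\rz^{n-1}\times\Sigma)$ supplies the $R$-boundedness of the lifting family $\{\spk{\mu}^{-a}\spk{D',\mu}^{a}\}$ in the appropriate $\scrL$-class. Composing the two $R$-bounded families via Remark~\ref{rem:rbounded-bounded} (applied inside the $\varepsilon(\cdot)$-spaces) and collecting $\mu$-weights gives
\[ -\theta(d-a)+r/p-a\;=\;-\theta d-(1-\theta)a+r/p\;=\;-s-\theta d+r/p-\eps_0,\]
so that $\{\spk{\mu}^{-s-\theta d+r/p-\eps_0}K_\mu\}\subset\scrL(B^s_{pp},B^{s+\alpha}_{pp})$ is $R$-bounded. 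Taking $\eps_0=\eps$ gives the claim for $\theta<1$.

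\textbf{Step 4 (boundary case $\theta=1$).} The formula $a=(s+\eps_0)/(1-\theta)$ degenerates at $\theta=1$. A limiting argument — equivalently, Theorem~\ref{thm:R-boundedness-interpolation} applied between Step~3 at $\theta=0$ and Step~3 at some $\theta_1\in(0,1)$ — covers the missing endpoint; the weight interpolates correctly because $-s-\theta d+r/p-\eps$ is linear in $\theta$.

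\textbf{Main obstacle.} The subtle technical point is that Remark~\ref{lem:r-boundedness-pseudo-02} naturally gives $\{\spk{\mu}^{-a}\spk{D',\mu}^a\}\subset\scrL(B^s_{pp}(\rz^{n-1}),B^{s-a}_{pp}(\rz^{n-1}))$, whereas the composition in Step~3 needs it as a family into the \emph{stronger} space $B^{-\eps_0}_{pp}(\rz^{n-1})$ (since for $\theta>0$ one has $s-a<-\eps_0$, so $B^{-\eps_0}_{pp}\hookrightarrow B^{s-a}_{pp}$ goes the wrong direction). This gap is closed by the observation that the symbol $\spk{\xi',\mu}^{a}$ behaves like $\spk{\mu}^{a}$ for $|\xi'|\lesssim\spk{\mu}$ and like $\spk{\xi'}^{a}$ for $|\xi'|\gtrsim\spk{\mu}$: splitting $g=g^L+g^H$ dyadically at the scale $\spk{\mu}$ and using Bernstein on the low-frequency part together with reverse Bernstein on the high-frequency part yields $\|\spk{D',\mu}^a g\|_{B^{-\eps_0}_{pp}}\lesssim\spk{\mu}^a\|g\|_{B^s_{pp}}$, which, made $R$-boundedness–uniform, closes the composition. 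This Bernstein-based refinement is the only place where the loss of $\spk{\mu}^{\eps}$ (with $\eps=\eps_0$) is actually produced.
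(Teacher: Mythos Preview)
Your reduction strategy has two genuine gaps.

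\textbf{Step~1 is wrong in the stated direction.} For $p>2$ the Triebel--Lizorkin embedding goes the other way: since $2<p$ one has $F^{t}_{p,2}\hookrightarrow F^{t}_{p,p}$, i.e.\ $H^{t}_{p}\hookrightarrow B^{t}_{pp}$, not the reverse. So proving $R$-boundedness with Besov target $B^{s+\alpha}_{pp}(\rz^n_+)$ does \emph{not} imply the desired Bessel target $H^{s+\alpha}_{p}(\rz^n_+)$. This particular issue could be bypassed by invoking the Triebel--Lizorkin half of Theorem~\ref{thm:r-boundedness-negative-s} with $q=2$ directly, since $F^{s_0}_{pp}=B^{s_0}_{pp}$ on the source and $F^{\,\cdot}_{p,2}=H^{\,\cdot}_{p}$ on the target; but you should not route through a Besov target first.

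\textbf{The ``Main obstacle'' fix is false for $\theta>0$.} The Bernstein-type inequality you assert,
\[
\|\spk{D',\mu}^{a}g\|_{B^{-\eps_0}_{pp}(\rz^{n-1})}\;\lesssim\;\spk{\mu}^{a}\,\|g\|_{B^{s}_{pp}(\rz^{n-1})},
\qquad a=\frac{s+\eps_0}{1-\theta},
\]
cannot hold uniformly in $g$. Take $g$ with $\widehat g$ supported at a single dyadic shell $|\xi'|\sim N\gg\spk{\mu}$. Then $\spk{D',\mu}^{a}g\sim N^{a}g$, and the two sides scale like $N^{a-\eps_0}\|g\|_{L^p}$ and $\spk{\mu}^{a}N^{s}\|g\|_{L^p}$ respectively. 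The ratio is $N^{a-s-\eps_0}\spk{\mu}^{-a}$ with
\[
a-s-\eps_0=(s+\eps_0)\Big(\tfrac{1}{1-\theta}-1\Big)=\tfrac{\theta}{1-\theta}(s+\eps_0)>0,
\]
so it blows up as $N\to\infty$. Thus for $\theta>0$ the lifting $\spk{D',\mu}^{a}$ with your choice of $a$ simply does not land in $B^{-\eps_0}_{pp}$ with the required bound, and the composition in Step~3 does not close. Step~4 is then circular: you propose to interpolate between $\theta=0$ and some $\theta_1\in(0,1)$, but you have no valid endpoint at $\theta_1$.

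For comparison, the paper's proof avoids this obstruction entirely by working with the symbol-kernel $k$ as an operator-valued symbol in $S^{d-\frac12+m}(\rz^{n-1}\times\Sigma;\scrL(\cz,H^{m}_{2}(\rz_+)))$. The crucial inequality is $\spk{\xi',\mu}^{z}\lesssim\spk{\xi'}^{z}\spk{\mu}^{\max(0,z)}$, applied factor by factor to the decomposition $\spk{\xi',\mu}^{d-\frac1p+t\pm\eps}=\spk{\xi',\mu}^{\theta d-\frac rp}\spk{\xi',\mu}^{t+(1-\theta)d-\frac{1-r}{p}}\spk{\xi',\mu}^{\pm\eps}$; this correctly distributes the $\theta$-dependent exponents between $\spk{\mu}$ and $\spk{\xi'}$ in a way your lifting-plus-Theorem~\ref{thm:r-boundedness-negative-s} route cannot reproduce. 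Mikhlin's theorem then gives $R$-boundedness into mixed spaces $H^{\,\cdot}_{p}(\rz^{n-1};H^{t+\rho\pm\eps}_{2}(\rz_+))$, and the passage to $H^{\,\cdot}_{p}(\rz^n_+)$ uses a real-interpolation embedding of Grubb--Kokholm for the $H^{\,\cdot}_{2}(\rz_+)$ factor---this is precisely where the hypothesis $p>2$ enters.
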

\begin{proof}
Recall from Theorem \ref{thm:poisson-characterization-01} that
 $$\|D^{\ell'}_{x_n}D^{\alpha'}_{\xi'} k(\xi',\mu,\cdot)\|_{L^q(\rz_+)}\lesssim
 \spk{\xi',\mu}^{d-\frac1q+\ell'-|\alpha'|}$$
for any choice of $\ell',\alpha'$.  Therefore,
 $$\|D^{\alpha'}_{\xi'} k(\xi',\mu,\cdot)\|_{H^m_q(\rz_+)}\lesssim \spk{\xi',\mu}^{d-\frac1q+m-|\alpha'|}$$
for every $m\ge 0$ (first for $m\in\nz_0$ and then for real $m$ by complex interpolation). If we identify $H^m_q(\rz_+)\cong\scrL(\cz,H^m_q(\rz_+))$ we thus can consider $k$ as an operator-valued pseudodifferential symbol
\begin{align}\label{eq:symb}
 k\in S^{d-\frac1q+m}(\rz^{n-1}\times\Sigma,\scrL(\cz,H^m_q(\rz_+))),\qquad m\ge0,\;1<q<+\infty.
\end{align}
Now let $0<\eps<\rho:=\frac12-\frac1p$. Moreover, let $t$ denote a real number with
 $$0\le t\le s-(1-\theta)d+\frac{1-r}{p}.$$
Due to \eqref{eq:symb} with $q=2$ and $m=t+\rho\pm\eps$,
\begin{align*}
 \|D^{\alpha'}_{\xi'} k(\xi',\mu,\cdot)\|_{\scrL(\cz,H^{t+\rho\pm\eps}_2(\rz_+))}
  &\lesssim \spk{\xi',\mu}^{d-\frac12+t+\rho\pm\eps-|\alpha'|}
  =\spk{\xi',\mu}^{d-\frac1p+t\pm\eps-|\alpha'|}\\
  &=\spk{\xi',\mu}^{\theta d-\frac rp}\spk{\xi',\mu}^{t+(1-\theta)d-\frac{1-r}{p}}\spk{\xi',\mu}^{-|\alpha'|\pm\eps}\\
  &\lesssim\spk{\mu}^{\theta d-\frac rp}\spk{\xi'}^{t+(1-\theta)d-\frac{1-r}{p}}\spk{\mu}^s\spk{\xi'}^{-|\alpha'|\pm\eps}\spk{\mu}^\eps\\
  &\lesssim \spk{\mu}^{s+\theta d-\frac rp+\eps} \spk{\xi'}^{t+(1-\theta) d-\frac{1-r}{p}\pm\eps-|\alpha'|};
\end{align*}
here we have used $\spk{\xi',\mu}^z\lesssim \spk{\xi'}^z\spk{\mu}^{\max(0,z)}$
for arbitrary $z\in\rz$ and that
 $$\max\Big(0,t+(1-\theta)d-\frac{1-r}{p}\Big)\le\max(0,s)=s.$$
This means that
 $$\spk{\mu}^{-s-\theta d+\frac rp-\eps}k \in S^{t+(1-\theta) d-\frac{1-r}{p}\pm\eps}\big(\rz^{n-1},\scrL(\cz,H^{t+\rho\pm\eps}_2(\rz_+))\big)$$
uniformly in $\mu\in\Sigma$.
Mikhlin's theorem then yields the $R$-boundedness in $\mu\in\Sigma$ of
 $$\spk{\mu}^{-s-\theta d+\frac rp-\eps}K_\mu:H^{s\pm\eps}_p(\rz^{n-1})\lra H^{s-t-(1-\theta)d+\frac{1-r}{p}}_p(\rz^{n-1};H^{t+\rho\pm\eps}_2(\rz_+)).$$
Applying  (1.49) in Theorem 1.8 of \cite{Grubb-Kokholm93} with $\delta'=\rho-\eps$ and $\delta=\rho+\eps$,
 $$(H^{t+\rho-\eps}_2(\rz_+),H^{t+\rho+\eps}_2(\rz_+))_{\frac12,p}\subset H^t_p(\rz_+),$$
and thus Theorem \ref{thm:R-boundedness-interpolation} yields the $R$-boundedness in $\mu\in\Sigma$ of
 $$\spk{\mu}^{-s-\theta d+\frac rp-\eps}K_\mu:B^{s}_{pp}(\rz^{n-1})\lra H^{s-t-(1-\theta)d+\frac{1-r}{p}}_p(\rz^{n-1};H^t_p(\rz_+)). $$
Applying this with $t=0$ and $t=s-(1-\theta)d+\frac{1-r}{p}$ yields the claim,
since
$H^{\sigma}_p(\rz^{n-1};L^p(\rz_+))\cap L^p(\rz^{n-1};H^\sigma_p(\rz_+))= H^\sigma(\rz^n_+)$ for every $\sigma\ge0$.
\end{proof}

\subsection{$R$-boundedness in (weak) $L^p$-$L^q$-spaces}

First, we will introduce a wider class of Poisson operators which extends the one introduced in  Definition \ref{def:poisson-symbols}.

\begin{definition}
Let $Z$ be a Banach space. Then $\scrS^0(\rz_+;Z)$ denotes the space of all smooth functions $u:\rz_+\to Z$ such that $\spk{t}^\ell (tD_t)^mu$ is bounded on $\rz_+$ for every choice of $\ell,m\in\nz_0$.
\end{definition}

$\scrS^0(\rz_+;Z)$ is a Fréchet space in an obvious way. If $\omega\in\scrC^\infty(\rz_+)$ has a bounded support and $\omega\equiv1$ on $(0,\delta)$ for some $\delta>0$, then $u\in \scrS^0(\rz_+;Z)$ if and only if $(1-\omega)u\in \scrS(\rz_+;Z)$ and $(t\partial_t)^mu\in L^\infty(\rz_+;Z)$ for every $m\in\nz_0$. Clearly, the space of rapidly decreasing functions $\scrS(\rz_+;Z)$ is continuously
embedded in $\scrS^0(\rz_+;Z)$.

\begin{definition}\label{def:weak-poisson-symbols}
Let $Z$ be a Banach space, and $d\in\rz$. The space $S^d_{P,w}(\rz^{n-1}\times\Sigma;Z)$ consists of all smooth functions $k:\rz^{n-1}\times\Sigma\times\rz_+\to Z$ of the form
 $$k(\xi',\mu;x_n)=\wt{k}(\xi',\mu;\spk{\xi',\mu}x_n),\qquad
     \wt{k}(\xi',\mu;t)\in S^{d}(\rz^{n-1}\times\Sigma;\scrS^0(\rz_+;Z)).$$
\end{definition}

$S^d_{P,w}(\rz^{n-1}\times\Sigma;Z)$ is equipped with the Fréchet topology defined by the norms
 $$\|k\|'_{d,(N)}
    =\sup_{\substack{\xi'\in\rz^{n-1},\,t>0\\\ell+m+|\alpha'|+|\beta|\le N}}
     |(tD_t)^m D^{\alpha'}_{\xi'}D^\beta_\mu\wt{k}(\xi',\mu,t)|\spk{t}^\ell\spk{\xi',\mu}^{-d+|\alpha'|+|\beta|}.$$

As before, in the previous definition also the case $\Sigma=\emptyset$ is allowed, yielding the corresponding class of parameter-independent Poisson symbol-kernels. Obviously,
 $$S^d_{P}(\rz^{n-1}\times\Sigma;Z)\subset S^d_{P,w}(\rz^{n-1}\times\Sigma;Z).$$
Weak Poisson operators (also) occur naturally in the analysis of so-called ``edge-degenerate" differential operators on manifolds with boundary.

\begin{lemma}\label{lem:uniform}
Let $k\in S^{0}_{P,w}(\rz^{n-1}\times\Sigma;Z)$ and define $k_\mu(\xi',x_n)=k(\xi',\mu,x_n)$. Then $k_\mu\in S^{0}_{P,w}(\rz^{n-1};Z)$ and
for every $N$ there exists a constant $C_N\ge0$ independent of $k$ such that,
for all $\mu\in\Sigma$,
 $$\|k_\mu\|'_{0,(N)}\le C_N \|k\|'_{0,(N)}.$$
\end{lemma}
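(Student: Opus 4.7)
The plan is to realise $k_\mu$ in the form demanded by Definition~\ref{def:weak-poisson-symbols} and then verify the seminorm estimates by an explicit chain-rule computation. By hypothesis $k(\xi',\mu,x_n)=\wt{k}(\xi',\mu,\spk{\xi',\mu}x_n)$ with $\wt{k}\in S^{0}(\rz^{n-1}\times\Sigma;\scrS^0(\rz_+;Z))$. I would introduce the scalar factor $\rho(\xi',\mu):=\spk{\xi',\mu}/\spk{\xi'}\ge 1$ and set $\wt{k}_\mu(\xi',s):=\wt{k}(\xi',\mu,\rho(\xi',\mu)s)$, so that $k_\mu(\xi',x_n)=\wt{k}_\mu(\xi',\spk{\xi'}x_n)$; the task then reduces to showing $\|k_\mu\|'_{0,(N)}\lesssim\|k\|'_{0,(N')}$, uniformly in $\mu\in\Sigma$, for some $N'=N'(N)$ (the stated form follows after reindexing the seminorms).

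First I would record two a-priori estimates on $\rho$, both uniform in $\mu\in\Sigma$. A direct computation gives $\partial_{\xi'_j}\log\rho=-\xi'_j|\mu|^2/(\spk{\xi'}^{2}\spk{\xi',\mu}^{2})$, which combined with $\spk{\xi',\mu}\ge\spk{\xi'}$ and $|\mu|\le\spk{\xi',\mu}$ yields $|\partial_{\xi'_j}\log\rho|\lesssim \spk{\xi'}^{-1}$; a short induction, tracking the polynomial-versus-denominator degrees in the rational expressions produced, extends this to
\[
|D^{\gamma}_{\xi'}\log\rho(\xi',\mu)|\lesssim \spk{\xi'}^{-|\gamma|},\qquad |\gamma|\ge 1,
\]
uniformly in $\mu$. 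Applying Faà di Bruno to $\rho=e^{\log\rho}$ then delivers $|D^{\gamma}_{\xi'}\rho|\lesssim \rho\,\spk{\xi'}^{-|\gamma|}$ for every $|\gamma|\ge 1$. I will also use the elementary fact that $\rho\ge 1$ forces $\spk{s}\le\spk{\rho s}$.

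For the seminorm I would exploit that $\rho$ is independent of $s$, so $(sD_s)^m$ commutes with the rescaling: $(sD_s)^m[\wt{k}(\xi',\mu,\rho s)]=[(tD_t)^m\wt{k}](\xi',\mu,\rho s)$. The multivariable chain rule then expresses $D^{\alpha'}_{\xi'}\wt{k}_\mu(\xi',s)$ as a finite sum, indexed by multi-indices with $|\beta|+\sum_{i=1}^{j}|\gamma_i|=|\alpha'|$ and $|\gamma_i|\ge 1$, of terms of the shape
\[
c_{\beta,\gamma,j}\;s^{j}\Bigl(\prod_{i=1}^{j}D^{\gamma_i}_{\xi'}\rho\Bigr)(\partial_t^{j}D^{\beta}_{\xi'}\wt{k})(\xi',\mu,\rho s).
\]
I would then convert $s^{j}\partial_t^{j}$ evaluated at $t=\rho s$ via the falling-factorial identity $t^{j}\partial_t^{j}=P_j(tD_t)$ (a polynomial of degree $j$), rewriting each such term as $\rho^{-j}(\prod_iD^{\gamma_i}_{\xi'}\rho)\cdot[P_j(tD_t)D^{\beta}_{\xi'}\wt{k}](\xi',\mu,\rho s)$, i.e.\ a linear combination of $(tD_t)^{m'}D^{\beta}_{\xi'}\wt{k}$ with $m'\le m+j$ (the $(sD_s)^m$ factor contributes the extra $(tD_t)^m$).

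Putting the ingredients together finishes the argument: the $\rho$-bounds above give $|\rho^{-j}\prod_i D^{\gamma_i}_{\xi'}\rho|\lesssim \spk{\xi'}^{-(|\alpha'|-|\beta|)}$; the defining seminorms of $k$ provide $\spk{\rho s}^{\ell}\|(tD_t)^{m'}D^{\beta}_{\xi'}\wt{k}(\xi',\mu,\rho s)\|_Z\lesssim \|k\|'_{0,(N')}\spk{\xi',\mu}^{-|\beta|}$ for a suitable $N'=N'(N)$; and $\spk{s}\le\spk{\rho s}$ together with $\spk{\xi'}\le\spk{\xi',\mu}$ absorb the mismatch between the weights natural to $k$ and those natural to $k_\mu$. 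Collecting factors produces $\spk{s}^{\ell}\spk{\xi'}^{|\alpha'|}\|(sD_s)^m D^{\alpha'}_{\xi'}\wt{k}_\mu(\xi',s)\|_Z\le C_N\|k\|'_{0,(N')}$ for all $\ell+m+|\alpha'|\le N$, uniformly in $\mu\in\Sigma$. The main obstacle is precisely the bookkeeping in this chain-rule step: the $j$ powers of $s$ generated by differentiating the argument $\rho s$ in $\xi'$ have to be paired with $\rho^{-j}$ at the rescaled point $t=\rho s$, and this factor compensates exactly the $\rho^{j}$ growth coming from $\prod_i|D^{\gamma_i}_{\xi'}\rho|\lesssim\rho^{j}\spk{\xi'}^{-(|\alpha'|-|\beta|)}$, so that no $\mu$-dependence survives in the final estimate.
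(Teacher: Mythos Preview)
Your approach is essentially the same as the paper's: write $\wt{k}_\mu(\xi',s)=\wt{k}(\xi',\mu,\rho(\xi',\mu)s)$ with $\rho=\spk{\xi',\mu}/\spk{\xi'}$, expand $(sD_s)^m D^{\alpha'}_{\xi'}\wt{k}_\mu$ by the chain rule, and estimate the coefficients arising from derivatives of $\rho$. The paper organises the bookkeeping via an auxiliary bisymbol class $\wt S^{d,\nu}$ (functions $p$ with $|D^{\alpha'}_{\xi'}D^\beta_\mu p|\lesssim\spk{\xi'}^{\nu-|\alpha'|}\spk{\xi',\mu}^{d-\nu-|\beta|}$), observing $\rho\in\wt S^{0,-1}$ and $\rho^{-1}\in\wt S^{0,1}$, whereas you compute the $\rho$-derivative bounds by hand; the substance is identical.

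There is one point to correct. The lemma asserts $\|k_\mu\|'_{0,(N)}\le C_N\|k\|'_{0,(N)}$ with the \emph{same} $N$ on both sides, and your parenthetical ``the stated form follows after reindexing the seminorms'' is not a valid step: the seminorms $\|\cdot\|'_{0,(N)}$ are increasing in $N$, so an estimate against $\|k\|'_{0,(N')}$ with $N'>N$ is genuinely weaker and cannot be upgraded by relabelling. Fortunately your own argument already delivers $N'=N$. In your expansion each term carries $(tD_t)^{m'}D^{\beta}_{\xi'}\wt{k}$ with $m'\le m+j$, where $|\beta|+\sum_{i=1}^{j}|\gamma_i|=|\alpha'|$ and $|\gamma_i|\ge 1$; hence $j\le|\alpha'|-|\beta|$, giving $m'+|\beta|\le m+|\alpha'|$ and therefore $\ell+m'+|\beta|\le\ell+m+|\alpha'|\le N$. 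Drop the reindexing remark and record $N'=N$ outright; this is exactly the index constraint $i+|\alpha''|\le|\alpha'|$ in the paper's expansion.
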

\begin{proof}
Let $\wt k$ be associated with $k$ as in Definition \ref{def:weak-poisson-symbols}. Then
 $$k_\mu(\xi',x_n)=\wt{k}_\mu(\xi',\spk{\xi'}x_n),\qquad \wt{k}_\mu(\xi',t)
     =\wt{k}\Big(\xi',\mu,\frac{\spk{\xi',\mu}}{\spk{\xi'}}t\Big).$$
Now let $\wt{S}^{d,\nu}$ denote the space of all complex-valued smooth functions $p(\xi',\mu)$ satisfying
 $$|D^{\alpha'}_{\xi'}D^\beta_\mu p(\xi',\mu)|\lesssim
    \spk{\xi'}^{\nu-|\alpha'|}\spk{\xi',\mu}^{d-\nu-|\beta|}$$
for derivatives of arbitrary order. Note that $p\in \wt{S}^{d,\nu}$ implies
$D^{\alpha'}_{\xi'}p\in \wt{S}^{d-|\alpha'|,\nu-|\alpha'|}$ and that
$p_j\in \wt{S}^{d_j,\nu_j}$ implies $p_0p_1\in \wt{S}^{d_0+d_1,\nu_0+\nu_1}$;
moreover, $\spk{\xi',\mu}/\spk{\xi'}\in \wt{S}^{0,-1}$ and
$\spk{\xi'}/\spk{\xi',\mu}\in \wt{S}^{0,1}$.
By induction it is then easy to verify that
 $$(t D_t)^{m}D^{\alpha'}_{\xi'}\wt{k}_\mu(\xi',t)=\sum_{\substack{\alpha''\le\alpha'\\ i+|\alpha''|\le|\alpha'|}}
    p_{\alpha'',i}(\xi',\mu)
    ((tD_t)^{m+i}D^{\alpha''}_{\xi'}\wt{k}) \Big(\xi',\mu,\frac{\spk{\xi',\mu}}{\spk{\xi'}}t\Big),$$
where $p_{\alpha'',i}\in \wt{S}^{|\alpha''|-|\alpha'|,|\alpha''|-|\alpha'|}$ do not depend on $k$.
Since $\spk{\xi',\mu}^{-|\alpha''|}\le \spk{\xi'}^{-|\alpha''|}$ and
$\spk{t\spk{\xi',\mu}/\spk{\xi'}}^{-\ell}\le \spk{t}^{-\ell}$, it follows that
 $$|(t D_t)^{m}D^{\alpha'}_{\xi'}\wt{k}_\mu(\xi',t)|\lesssim \|k\|'_{0,(N)}\spk{\xi'}^{-|\alpha'|}\spk{t}^{-\ell}$$
uniformly in $(\xi',\mu,t)$ whenever $\ell+m+|\alpha'|\le N$.
This yields the claim.
\end{proof}

\forget{
\begin{definition}\label{def:mikhlin}
For a Banach space $Z$ let $\calF(Z)$ denote the space of all functions $a\in\scrC^{n-1}(\rz^{n-1}\setminus\{0\},Z)$
with finite norm
 $$\|a\|_{\calF(Z)}=\sup_{\xi'\not=0,\;|\alpha'|\le n-1}\|\partial^{\alpha'}_{\xi'}a\|_Z|\xi'|^{|\alpha'|}.$$
\end{definition}
}

\begin{proposition}\label{thm:boundedness}
Let $1\le p\le+\infty$, $1<q<+\infty$, and $H$ be a Hilbert space.
If $k\in S^{0}_{P,w}(\rz^{n-1};\scrL(H))$ then
 $$\op(k)\in \scrL(L^q(\rz^{n-1};H),L^p(\rz_+;L^q(\rz^{n-1};H))$$
with operator-norm
 $$\|\op(k)\|_{\scrL(L^q(\rz^{n-1};H),L^p(\rz_+;L^q(\rz^{n-1};H))}\le
     C\big\|\|k(\cdot,x_n)\|_{\calF(\scrL(H))}\big\|_{L^p(\rz_+;x_n)},$$
where $C=C_{pq}\ge0$ is a constant not depending on $k$.
\end{proposition}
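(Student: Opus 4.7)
My plan is to view $\op(k)$ as a family of operator-valued Fourier multipliers on $\rz^{n-1}$ parametrised by the normal variable $x_n>0$. From the definition in \eqref{eq:poisson-operator} (with $\Sigma=\emptyset$) one has
\[
 [\op(k)u](x',x_n) = [k(D',x_n)u](x'),
\]
where $k(D',x_n)$ denotes the Fourier multiplier on $\rz^{n-1}$ whose symbol is $\xi'\mapsto k(\xi',x_n)\in\scrL(H)$. The strategy is then two-step: first, control $\|k(D',x_n)\|_{\scrL(L^q(\rz^{n-1};H))}$ pointwise in $x_n$ via Mikhlin; second, take the $L^p(\rz_+)$-norm in $x_n$.

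For the first step, observe that $\calF(\scrL(H))$ is, by construction, precisely the Mikhlin-type norm appearing in Corollary \ref{cor:Mikhlin} (with ambient dimension $n-1$ in place of $n$). Since $H$ is a Hilbert space and $L^q(\rz^{n-1};H)=H^0_q(\rz^{n-1};H)$ fits into the Bessel-potential scale of that corollary, its application to the singleton $\{k(\cdot,x_n)\}$ yields a constant $C=C_{n,q}$, independent of $x_n$ and $k$, such that
\[
 \|k(D',x_n)\|_{\scrL(L^q(\rz^{n-1};H))} \le C\,\|k(\cdot,x_n)\|_{\calF(\scrL(H))}\qquad\text{for every fixed } x_n>0.
\]
The right-hand side is finite for each $x_n$: writing $k(\xi',x_n)=\wt{k}(\xi',\spk{\xi'}x_n)$ with $\wt{k}\in S^0(\rz^{n-1};\scrS^0(\rz_+;\scrL(H)))$ and differentiating by the chain rule produces bounds of the form $\|\partial^{\alpha'}_{\xi'}k(\xi',x_n)\|_{\scrL(H)}\lesssim \spk{\xi'}^{-|\alpha'|}$, with constants allowed to depend on $x_n$ (this is essentially the reasoning used in Lemma \ref{lem:uniform}).

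For the second step, Fubini combined with the pointwise bound above gives, for $1\le p<+\infty$,
\[
 \|\op(k)u\|_{L^p(\rz_+;L^q(\rz^{n-1};H))}^p = \int_0^{+\infty}\|k(D',x_n)u\|_{L^q(\rz^{n-1};H)}^p\,dx_n \le C^p\,\|u\|_{L^q(\rz^{n-1};H)}^p\,\bigl\|\|k(\cdot,x_n)\|_{\calF(\scrL(H))}\bigr\|_{L^p(\rz_+)}^p,
\]
with the obvious essential-supremum modification when $p=+\infty$. Measurability of $x_n\mapsto\|k(D',x_n)u\|_{L^q}$ is ensured by the joint smoothness of $k$ in $(\xi',x_n)$. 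This directly produces the claimed operator-norm estimate.

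The proposition is thus a parametric application of Mikhlin's theorem combined with Fubini, and I do not foresee a serious obstacle. The only two things to verify are the matching of $\calF(\scrL(H))$ with the Mikhlin seminorm appearing in Corollary \ref{cor:Mikhlin}, and the finiteness of $\|k(\cdot,x_n)\|_{\calF(\scrL(H))}$ for each $x_n>0$; both are immediate from the definitions.
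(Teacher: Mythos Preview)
Your approach is exactly that of the paper: view $\op(k)$ as an $x_n$-parametrised family of Fourier multipliers, bound each via Corollary~\ref{cor:Mikhlin}, then integrate in $x_n$. The operator-norm inequality you derive is correct and matches the paper's first display.

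However, there is a gap. The proposition asserts not only the norm estimate but also that $\op(k)$ is a \emph{bounded} operator, i.e.\ that the right-hand side $\bigl\|\,\|k(\cdot,x_n)\|_{\calF(\scrL(H))}\bigr\|_{L^p(\rz_+)}$ is finite. You only check that $\|k(\cdot,x_n)\|_{\calF(\scrL(H))}<\infty$ for each fixed $x_n$, explicitly allowing the implied constants to depend on $x_n$. Pointwise finiteness does not give $L^p$-integrability on $\rz_+$; one needs decay as $x_n\to\infty$. The paper closes this by expanding $D^{\alpha'}_{\xi'}k(\xi',x_n)$ via the chain rule applied to $k(\xi',x_n)=\wt k(\xi',\spk{\xi'}x_n)$ and using the $\scrS^0$-decay of $\wt k$ in the $t$-variable to obtain the uniform bound
\[
 \|D^{\alpha'}_{\xi'}k(\xi',x_n)\|_{\scrL(H)}\lesssim \|k\|'_{0,(n+1)}\,\spk{\xi'}^{-|\alpha'|}\spk{\spk{\xi'}x_n}^{-2},
\]
hence $\|k(\cdot,x_n)\|_{\calF(\scrL(H))}\lesssim \|k\|'_{0,(n+1)}\spk{x_n}^{-2}\in L^p(\rz_+)$ for every $1\le p\le\infty$. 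This step is short but essential, and your proposal omits it.
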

\begin{proof}
Since $\op(k)f(x)=(k(D',x_n)f)(x')$ for $x\in\rz^n_+$, an application of Mikhlin's theorem (cf. Theorem \ref{ex:Mikhlin} and Corollary \ref{cor:Mikhlin}) yields
\begin{align*}
 \|\op(k)f\|^p_{L^p(\rz_+;L^q(\rz^{n-1};H))}
  &=\int_0^{+\infty}\|k(D',x_n)f\|_{L^q(\rz^{n-1};H)}^p\,dx_n\\
  &\le C\int_0^{+\infty} \|k(\cdot,x_n)\|_{\calF(\scrL(H))}^p\|f\|_{L^q(\rz^{n-1};H)}^p\,dx_n.
\end{align*}
This yields immediately the claim provided we can show that
$x_n\mapsto\|k(\cdot,x_n)\|_{\calF(\scrL(H))}$ belongs to $L^p(\rz_+)$. To this end let $k(\xi',x_n)=\wt{k}(\xi',\spk{\xi'}x_n)$ as
in Definition \ref{def:weak-poisson-symbols}. It is easy to check that
 $$D^{\alpha'}_{\xi'}k(\xi',x_n)=\sum_{\substack{\alpha''\le\alpha'\\ m+|\alpha''|\le|\alpha'|}} p_{\alpha'',m}(\xi')
    (D^{\alpha''}_{\xi'}(tD_t)^m\wt{k})(\xi',\spk{\xi'}x_n),$$
where $p_{\alpha'',m}\in S^{|\alpha''|-|\alpha'|}(\rz^{n-1})$ do not depend on $k$.
Thus, whenever $|\alpha'|\le n-1$,
\begin{align}\label{eq:estimate01}
  \|D^{\alpha'}_{\xi'}k(\xi',x_n)\|_{\scrL(H)}\lesssim\|k\|'_{0,(n+1)}\spk{\xi'}^{-|\alpha'|}\spk{\spk{\xi'}x_n}^{-2}
\end{align}
uniformly in $\xi'$ and $x_n$, hence
$\|k(\cdot,x_n)\|_{\calF(\scrL(H))}\lesssim\|k\|'_{0,(n+1)}\spk{x_n}^{-2}$.
This clearly yields the claim.
\end{proof}

In the following, we shall use the \emph{weak} $L^p$-space (or \emph{Lorentz space}) on $\R_+=(0,\infty)$ with values in a Banach space $X$, denoted by $L^{p,\infty}(\R_+;X)$. Recall that its norm is given as
 \[ \|f\|_{L^{p,\infty}(\R_+;X)} := \sup_{\alpha>0} \alpha \big[\lambda\big(\{t\in\R_+\mid \|f(t)\|_X >\alpha\}\big)\big]^{1/p},\]
where $\lambda$ stands for the Lebesgue measure in $\R_+$.

\begin{lemma}\label{lem:weak-lp}
    Let $f\colon\R_+\to X$ be measurable with
    \begin{equation}
        \label{A-1}
        \|f(t)\|_X \le C t^{-1/p} \; \text{ for almost all } t\in\R_+.
    \end{equation}
    Then $f\in L^{p,\infty}(\R_+;X)$ with $\|f\|_{L^{p,\infty}(\R_+;X)}\le C$.
\end{lemma}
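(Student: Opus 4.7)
The plan is to estimate the distribution function of $f$ directly from the pointwise bound \eqref{A-1}, and then take the supremum in $\alpha$.

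First I would fix $\alpha>0$ and observe that the pointwise bound $\|f(t)\|_X \le C t^{-1/p}$ (valid for almost every $t\in\R_+$) forces the super-level set to be contained in a concrete interval: if $\|f(t)\|_X>\alpha$, then $Ct^{-1/p}>\alpha$, i.e.\ $t<(C/\alpha)^p$. Hence, up to a null set,
\[
\{t\in\R_+\mid \|f(t)\|_X>\alpha\}\subseteq \big(0,(C/\alpha)^p\big).
\]
Taking Lebesgue measure yields $\lambda\big(\{t\in\R_+\mid \|f(t)\|_X>\alpha\}\big)\le (C/\alpha)^p$.

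Next I would plug this into the definition of the weak-$L^p$ norm:
\[
\alpha\,\big[\lambda\big(\{t\in\R_+\mid \|f(t)\|_X>\alpha\}\big)\big]^{1/p}\le \alpha\cdot\frac{C}{\alpha}=C.
\]
Since the bound is uniform in $\alpha>0$, taking the supremum gives $\|f\|_{L^{p,\infty}(\R_+;X)}\le C$, which is the claim.

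There is essentially no obstacle here; the only minor point to be careful about is that the pointwise estimate \eqref{A-1} only holds almost everywhere, but this does not affect the Lebesgue measure of the super-level set, so the argument goes through verbatim.
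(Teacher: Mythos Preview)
Your proof is correct and follows essentially the same approach as the paper: bound the super-level set $\{t:\|f(t)\|_X>\alpha\}$ by the interval $(0,(C/\alpha)^p)$, take Lebesgue measure, and then take the supremum over $\alpha$. The only cosmetic difference is that the paper names the set $A_\alpha$ explicitly; your remark about the a.e.\ exceptional set is a nice touch that the paper leaves implicit.
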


\begin{proof}
    For $\alpha>0$, let $A_\alpha := \{t\in\R_+\mid \|f(t)\|_X >\alpha\}$. For $t\in A_\alpha$, we have
    \[ C t^{-1/p} \ge \|f(t)\|_X > \alpha\]
    and therefore $t^{1/p} < C/\alpha$ which means $t<C^p\alpha^{-p}$.
    We get $A_\alpha\subset (0, C^p\alpha^{-p})$ and therefore $\lambda(A_\alpha)\le C^p \alpha^{-p}$. This implies \[ \|f\|_{L^{p,\infty}(\R_+;X)} = \sup_{\alpha\in\R_+} \alpha (\lambda(A_\alpha))^{1/p} \le \sup_{\alpha\in\R_+} \alpha C\alpha^{-1} = C.\]
The proof is complete.
\end{proof}

As a consequence of Lemma \ref{lem:weak-lp}, if
$m\colon \R_+\to \scrL(X)$ is measurable with
 \[ \| m(x_n) \|_{\scrL(X)} \le C x_n^{-1/p}, \qquad x_n\in\R_+,\]
then the operator
 \[ M\colon X \to L^{p,\infty}(\R_+;X),\qquad u\mapsto m(\cdot)u\]
is continuous. In fact, by the lemma, $x_n\mapsto m(x_n)u \in L^{p,\infty}(\R_+;X)$
with norm not greater than $C \|u\|_X$ for every $u\in X$.

The following theorem is the main result of this section. Let us remark that the stated result in case $p\ge 2$ and for certain strongly parameter-dependent Poisson operators has been obtained in \cite[Proposition~4.11]{Hummel21}. The proof given here is based on the technique from Theorem \ref{thm:poisson-rbounded-01} and permits to treat the larger class of weakly parameter-dependent Poisson operators as well as to cover the whole range $1<p<+\infty$ in a unified way. Although from \cite[Proposition~4.13]{Hummel21} it was known that \eqref{eq:q-greater-2} is, in general, not true for $p<2$, the theorem reveals that weak $L^p$-spaces can be used instead.

\begin{theorem}\label{thm:weak-main01}
Let $1<p,q<\infty$ and $k\in S^{0}_{P,w}(\rz^{n-1}\times\Sigma)$ then
 $$\scrK:=\big\{\spk{\mu}^{\frac1p}\op(k)(\mu)\mid \mu\in\Sigma\big\}\subset
    \scrL(L^q(\rz^{n-1}),L^{p,\infty}(\rz_+;L^q(\rz^{n-1}))$$
is $R$-bounded. If $p\ge 2$ then
\begin{equation}\label{eq:q-greater-2}
\scrK\subset\scrL(L^q(\rz^{n-1}),L^{p}(\rz_+;L^q(\rz^{n-1}))
\end{equation}
is also $R$ bounded. In either case there exists a constant $C\ge 0$ not
depending on $k$ such that
$\mathcal{R}(\scrK)\le C\|k\|'_{0,(n+1)}$.
\end{theorem}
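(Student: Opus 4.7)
The plan is to mimic the proof of Theorem~\ref{thm:poisson-rbounded-01}: via Remark~\ref{rem:rbounded-bounded} I will reduce the $R$-boundedness of $\scrK$ to the uniform boundedness of a single $\ell^2$-valued operator, constructed by placing the individual symbols on the diagonal. Given $(\mu_j)_{j=1}^L\subset\Sigma$, I set
\[
 \mathbf{k}(\xi',x_n):=\operatorname{diag}\bigl(\spk{\mu_j}^{1/p}\,k(\xi',\mu_j,x_n)\bigr)_{j=1}^L,
\]
which is $\ell^\infty$-valued and hence $\scrL(\ell^2)$-valued via the canonical diagonal embedding. By Lemma~\ref{lem:uniform}, each entry $k(\cdot,\mu_j,\cdot)$ lies in $S^0_{P,w}(\rz^{n-1})$ with semi-norms controlled uniformly in $\mu_j$ by those of $k$.

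The key pointwise step is to estimate the $\calF(\scrL(\ell^2))$-norm of $\mathbf{k}(\cdot,x_n)$. Running the calculation behind \eqref{eq:estimate01} with the uniform control from Lemma~\ref{lem:uniform} yields, for every $N$, $\|k(\cdot,\mu,x_n)\|_{\calF(\cz)}\lesssim\spk{\spk{\mu}x_n}^{-N}$ uniformly in $\mu\in\Sigma$ and $x_n>0$. Since the $\scrL(\ell^2)$-norm of a diagonal is the supremum of the entry moduli, and the function $s\mapsto s^{1/p}(1+sx_n)^{-N}$ peaks at $s\sim x_n^{-1}$ with value $\sim x_n^{-1/p}$, I conclude $\|\mathbf{k}(\cdot,x_n)\|_{\calF(\scrL(\ell^2))}\lesssim x_n^{-1/p}$ for $N\ge 1/p$, uniformly in $L$ and $(\mu_j)$.

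For the weak-$L^p$ statement, Lemma~\ref{lem:weak-lp} gives $x_n\mapsto x_n^{-1/p}\in L^{p,\infty}(\rz_+)$. I then prove a weak-type analogue of Proposition~\ref{thm:boundedness} by replacing the final $L^p(\rz_+)$-norm by its $L^{p,\infty}$-counterpart and invoking the pointwise dominance principle recorded immediately after Lemma~\ref{lem:weak-lp}; this produces the uniform bound
\[
 \bigl\|\op(\mathbf{k})\bigr\|_{\scrL(L^q(\rz^{n-1};\ell^2),\,L^{p,\infty}(\rz_+;L^q(\rz^{n-1};\ell^2)))}\lesssim\|k\|'_{0,(n+1)}.
\]
Combined with $\varepsilon(L^q(\rz^{n-1}))\cong L^q(\rz^{n-1};\ell^2)$ from Lemma~\ref{lem:varepsilon-of-lp} and Remark~\ref{rem:rbounded-bounded}, this settles the weak-$L^p$ claim for every $1<p<\infty$.

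The strong-$L^p$ statement for $p\ge 2$ will be the main obstacle, because $x_n^{-1/p}$ is borderline and fails to lie in $L^p(\rz_+)$, so the $\scrL(\ell^2)$-diagonal Mikhlin pathway does not close. The uniform-in-$\mu$ scalar estimate $\bigl\|\spk{\mu}^{1/p}\,\|k(\cdot,\mu,x_n)\|_{\calF(\cz)}\bigr\|_{L^p(\rz_+)}\lesssim 1$ (obtained by integrating $\spk{\mu}^{1/p}\spk{\spk{\mu}x_n}^{-N}$ in $x_n$ and substituting $y=\spk{\mu}x_n$) does hold, giving individual $L^q\to L^p(\rz_+;L^q)$ boundedness of every $\spk{\mu}^{1/p}\op(k)(\mu)$ via Proposition~\ref{thm:boundedness}; the real work is to promote this uniform boundedness to $R$-boundedness. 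My plan is to do so via real interpolation using Theorem~\ref{thm:R-boundedness-interpolation}: take auxiliary exponents $p_0<p<p_1$, apply the weak-type $R$-bound already established at both (with the respective matched prefactors $\spk{\mu}^{1/p_j}$), and invoke the identity $(L^{p_0,\infty},L^{p_1,\infty})_{\theta,p}=L^p$ with $1/p=(1-\theta)/p_0+\theta/p_1$; the $\spk{\mu}^{1/p}$-prefactor should then be recovered precisely from this convex combination of endpoint prefactors. The condition $p\ge 2$ is expected to enter via the $K$-convexity and vector-valued real-interpolation compatibility needed to transport the $R$-bound through Theorem~\ref{thm:R-boundedness-interpolation} in the $L^q(\rz^{n-1};\ell^2)$-setting.
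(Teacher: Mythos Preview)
Your weak-$L^{p}$ argument is essentially the paper's: reduce via Remark~\ref{rem:rbounded-bounded} to a single $\ell^{2}$-valued Poisson operator, control its Mikhlin norm by $x_{n}^{-1/p}$ using Lemma~\ref{lem:uniform} and Lemma~\ref{lem:max}, and conclude with Lemma~\ref{lem:weak-lp}. (One small point you gloss over: to close via Remark~\ref{rem:rbounded-bounded} you also need $\varepsilon(L^{p,\infty}(\rz_{+};L^{q}(\rz^{n-1})))\cong L^{p,\infty}(\rz_{+};L^{q}(\rz^{n-1};\ell^{2}))$, not just the $L^{q}(\rz^{n-1})$ identification.)

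The strong-$L^{p}$ part, however, has a genuine gap. Theorem~\ref{thm:R-boundedness-interpolation} applies to a \emph{single} family $\mathscr{T}$ that is simultaneously $R$-bounded in $\scrL(X_{0},Y_{0})$ and $\scrL(X_{1},Y_{1})$. Your two endpoint families $\{\spk{\mu}^{1/p_{0}}\op(k)(\mu)\}$ and $\{\spk{\mu}^{1/p_{1}}\op(k)(\mu)\}$ are \emph{different} sets of operators, so the theorem is not applicable, and the sentence ``the $\spk{\mu}^{1/p}$-prefactor should then be recovered from this convex combination'' is precisely the step that needs an argument. The paper's remedy is to work at the level of the single diagonal operator $\op(\mathbf{k})$ (with \emph{no} prefactors) and absorb the prefactors into weights on the source space: the weak-$L^{p_{j}}$ bound for $\op(\mathbf{k}^{(p_{j})})$ on $L^{q}(\rz^{n-1};\ell^{2})$ is rewritten as boundedness of $\op(\mathbf{k})$ from $L^{q}(\rz^{n-1};\ell^{2}_{\omega_{p_{j}}})$ to $L^{p_{j},\infty}(\rz_{+};L^{q}(\rz^{n-1};\ell^{2}))$, and one then interpolates this \emph{one} operator in ordinary norm.

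This also explains where $p\ge 2$ actually enters, and it is not $K$-convexity (all spaces in sight are $K$-convex for every $p$). Interpolating the target side gives $L^{p}(\rz_{+};L^{q}(\rz^{n-1};\ell^{2}))$ via $(L^{p_{0},\infty},L^{p_{1},\infty})_{\theta,p}=L^{p}$, but on the source side one lands in $L^{q}(\rz^{n-1};(\ell^{2}_{\omega_{p_{0}}},\ell^{2}_{\omega_{p_{1}}})_{\theta,p})$. By Theorem~\ref{thm:interpolation02} one has $(\ell^{2}_{\omega_{p_{0}}},\ell^{2}_{\omega_{p_{1}}})_{\theta,2}=\ell^{2}_{\omega_{p}}$, and the needed inclusion $(\cdot,\cdot)_{\theta,2}\hookrightarrow(\cdot,\cdot)_{\theta,p}$ holds precisely when $p\ge 2$. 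That is the sole origin of the restriction.
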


For the proof of this theorem, we need some preparation.
First, we recall some facts from interpolation theory. If $(X_0,X_1)$ is an interpolation couple of Banach spaces, the real interpolation space $(X_0,X_1)_{\theta,q}$ with $1\le q<\infty$ and $0<\theta<1$ consists of all $x\in X_0+X_1$ with
 $$\|x\|_{\theta,q}:=\Big(\int_0^\infty [t^{-\theta} K(t,x)]^q
    \frac{dt}{t}\Big)^{1/q}<\infty,$$
where $K(t,x)=\inf_{x=x_0+x_1}\|x_0\|_{X_0}+t\|x_1\|_{X_1}$. For $q=\infty$
one considers the norm
 $$\|x\|_{\theta,\infty}:=\sup_{t>0} t^{-\theta} K(t,x).$$

Given a measure space $S$, a measurable function $\omega:S\to(0,\infty)$, and a Banach space $X$, the weighted space $L^p_\omega(S;X)$, $1\le p<\infty$, consists of all measurable functions $f:S\to X$ with finite norm
 $$\|f\|_{L^p_\omega(S,X)}
   =\Big(\int_S \|f(s)\|_X^p\omega(s)\,ds\Big)^{1/p}.$$
If $\omega\equiv1$ we write $L^p(S;X)$, of course. The following result is a combination of Theorem 2.2.10 and C.3.14 in \cite{Hytoenen-vanNeerven-Veraar-Weis16}:

\begin{theorem}\label{thm:interpolation01}
Let $(X_0,X_1)$ be an interpolation couple. Let $1\le p_0,p_1<\infty$, $0<\theta<1$, and $\frac{1}{p}=\frac{1-\theta}{p_0}+\frac{\theta}{p_1}$. Then
 $$\left(L^{p_0}(S;X_0),L^{p_1}(S;X_1)\right)_{\theta,p}
     =L^p(S;(X_0,X_1)_{\theta,p}).$$
Moreover, there exist a constant $C=C(p_0,p_1,\theta)\ge1$ which does not depend on the involved interpolation couple such that, for all $f\in L^p(S,(X_0,X_1)_{\theta,p})$,
 $$\frac1C \|f\|_{L^p(S;(X_0,X_1)_{\theta,p})}\le
     \|f\|_{\left(L^{p_0}(S;X_0),L^{p_1}(S;X_1)\right)_{\theta,p}}\le
     C\|f\|_{L^p(S;(X_0,X_1)_{\theta,p})},$$
\end{theorem}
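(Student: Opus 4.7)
The plan is to establish the norm equivalence via the $K$-functional characterisation of real interpolation. Recall that for any interpolation couple $(A_0,A_1)$ and $0<\theta<1$, $1\le p<\infty$,
$\|x\|_{(A_0,A_1)_{\theta,p}}^p=\int_0^\infty \bigl(t^{-\theta}K(t,x;A_0,A_1)\bigr)^p\,\frac{dt}{t}$,
so both sides of the claimed identity are entirely determined by the respective $K$-functionals. The proof thus reduces to relating $K(t,f;L^{p_0}(S;X_0),L^{p_1}(S;X_1))$ to a suitable integral in $s$ of the pointwise $K$-functionals $K(t,f(s);X_0,X_1)$.

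For the upper estimate $\|f\|_{(L^{p_0}(X_0),L^{p_1}(X_1))_{\theta,p}}\lesssim \|f\|_{L^p(S;(X_0,X_1)_{\theta,p})}$: for each $t>0$ and $s\in S$ I would extract a near-optimal pointwise decomposition $f(s)=a(s,t)+b(s,t)$ with $\|a(s,t)\|_{X_0}+t\|b(s,t)\|_{X_1}\le 2K(t,f(s);X_0,X_1)$, arranging joint measurability in $(s,t)$ by approximating the infimum over a countable family of measurable decompositions. The decisive trick is to use not the same $t$ at every $s$ but a pointwise rescaling depending on $\|f(s)\|_{(X_0,X_1)_{\theta,p}}$, so that the $L^{p_0}$-contribution in $X_0$ and the $L^{p_1}$-contribution in $X_1$ balance globally. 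After Fubini and a change of variables the weighted integrals close via a Hardy inequality on $(0,\infty)$, and the exponent relation $\tfrac{1}{p}=\tfrac{1-\theta}{p_0}+\tfrac{\theta}{p_1}$ is precisely what is needed to match the Hardy exponents.

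For the reverse estimate, starting from an arbitrary admissible decomposition $f=g+h$ with $g\in L^{p_0}(S;X_0)$, $h\in L^{p_1}(S;X_1)$, the pointwise identity $f(s)=g(s)+h(s)$ gives $K(t,f(s);X_0,X_1)\le \|g(s)\|_{X_0}+t\|h(s)\|_{X_1}$. Substituting this into the iterated integral $\int_S\!\int_0^\infty (t^{-\theta}K(t,f(s);X_0,X_1))^p\,\frac{dt}{t}\,ds$, swapping the order of integration via Fubini, and applying once more a Hardy-type inequality on $(0,\infty)$ reduces the question to a scalar estimate whose validity is again equivalent to $\tfrac{1}{p}=\tfrac{1-\theta}{p_0}+\tfrac{\theta}{p_1}$. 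Taking the infimum over all such $(g,h)$ yields the lower bound, completing the equivalence.

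The main obstacle is twofold: first, producing the measurable near-optimal decompositions $a(\cdot,t), b(\cdot,t)$, which is standard but technical; second, arranging the pointwise rescaling in the upper bound so that the two $L^{p_i}$-contributions are simultaneously controlled by a single integral in $t$. All constants depend only on $p_0,p_1,\theta$, since every estimate uses only abstract properties of the $K$-functional, Hardy inequalities on $(0,\infty)$, and Fubini's theorem, never any specific feature of $(X_0,X_1)$. This is the classical Lions--Peetre interpolation theorem for vector-valued $L^p$-spaces, which is why the authors simply cite \cite{Hytoenen-vanNeerven-Veraar-Weis16} instead of reproducing the argument.
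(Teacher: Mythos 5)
Your second inclusion is where the proposal breaks down. To prove $\|f\|_{L^p(S;(X_0,X_1)_{\theta,p})}\lesssim\|f\|_{(L^{p_0}(S;X_0),L^{p_1}(S;X_1))_{\theta,p}}$ you fix a single admissible decomposition $f=g+h$ and substitute $K(t,f(s);X_0,X_1)\le\|g(s)\|_{X_0}+t\|h(s)\|_{X_1}$ into $\int_S\int_0^\infty\bigl(t^{-\theta}K(t,f(s))\bigr)^p\,\tfrac{dt}{t}\,ds$. But for a $t$-independent decomposition the inner integral is $+\infty$ whenever $f(s)\neq 0$: the integrand behaves like $t^{-\theta p-1}\|g(s)\|_{X_0}^p$ as $t\to 0$ and like $t^{(1-\theta)p-1}\|h(s)\|_{X_1}^p$ as $t\to\infty$, and at least one of these terms is present. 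So the substitution gives $+\infty\le+\infty$, no Hardy inequality applies, and ``taking the infimum over $(g,h)$'' at the end cannot repair it — estimating an interpolation norm from above always requires decompositions varying with $t$. Moreover, even with near-optimal $t$-dependent decompositions $f=g_t+h_t$ in the couple, the Fubini-then-Hardy scheme still fails, because at fixed $t$ you would need $\bigl(\int_S K(t,f(s))^p\,ds\bigr)^{1/p}\lesssim K(ct,f;L^{p_0}(X_0),L^{p_1}(X_1))$, and the couple controls only $L^{p_0}$- and $L^{p_1}$-norms of the pieces, which do not dominate an $L^p$-norm on a general measure space; already the scalar case $X_0=X_1=\mathbb{C}$, $S=(0,\infty)$, $f=\mathbf{1}_{(0,a)}$, $t=1$, $a\to\infty$ defeats such a fixed-$t$ comparison. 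In other words, the pointwise ($s$-dependent) rescaling that you correctly identify as the decisive trick in the first inclusion is needed in this direction as well (via the $K$- or $J$-discretization and an index shift $\nu\mapsto\nu+m(s)$), and your sketch omits it precisely where the theorem is nontrivial; a useful sanity check is that in the scalar case $(L^{p_0},L^{p_1})_{\theta,q}$ is the Lorentz space $L^{p,q}$, so any proof must use $q=p$ in an essential way, which your Fubini argument for this direction does not visibly do.

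For comparison: the paper does not prove the statement at all — it is quoted as a combination of Theorem 2.2.10 and Theorem C.3.14 of the cited monograph, i.e.\ it is the classical Lions--Peetre theorem. The standard complete arguments either carry out the index-shift construction in both directions, or reduce to the case $p_0=p_1=1$, where the $K$-functional localizes, $K(t,f;L^1(A_0),L^1(A_1))=\int_S K(t,f(s);A_0,A_1)\,ds$ (up to the measurable-selection point you mention), and then apply the power theorem for the real method; in that route the relation $\tfrac1p=\tfrac{1-\theta}{p_0}+\tfrac{\theta}{p_1}$ enters through the power theorem rather than through a Hardy inequality. Your first inclusion (pointwise near-optimal decompositions evaluated at a parameter rescaled by $\|f(s)\|_{(X_0,X_1)_{\theta,p}}$, then Fubini) is the right idea but is only a sketch; the second inclusion, as written, is a genuine gap.
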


The following result is Theorem 1.18.5 (and its proof) in \cite{Triebel78}:

\begin{theorem}\label{thm:interpolation02}
Let $X$ be a Banach space. Let $1\le p_0,p_1<\infty$, $0<\theta<1$, and $\frac{1}{p}=\frac{1-\theta}{p_0}+\frac{\theta}{p_1}$. Then
 $$\left(L^{p_0}_{\omega_0}(\Omega;X),L^{p_1}_{\omega_1}
   (\Omega;X)\right)_{\theta,p}=L^p_\omega(\Omega;X),\qquad
    \omega^{\frac{1}{p}}=\omega_0^\frac{1-\theta}{p_0}\omega_1^\frac{\theta}{p_1}.$$
Moreover, there exists a constant $C=C(p_0,p_1,\theta)\ge1$ which does neither depend on the involved weight-functions nor on $X$ such that, for all $f\in L^p_\omega(\Omega;X)$,
 $$\frac1C \|f\|_{L^p_\omega(\Omega;X)}\le
     \|f\|_{\left(L^{p_0}_{\omega_0}(\Omega:X),L^{p_1}_{\omega_1}(\Omega;X)\right)_{\theta,p}}\le
     C\|f\|_{L^p_\omega(\Omega;X)},$$
\end{theorem}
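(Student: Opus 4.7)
The plan is to combine a preliminary isometric rescaling, which reduces one of the two weights to $1$, with an explicit $K$-functional computation of Holmstedt type, which in turn reduces the weighted case to the pointwise-truncation argument familiar from the unweighted version (Theorem \ref{thm:interpolation01}).

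First, the multiplication $M_0\colon f\mapsto \omega_0^{1/p_0}f$ is simultaneously an isometric isomorphism $L^{p_0}_{\omega_0}(\Omega;X)\to L^{p_0}(\Omega;X)$, as well as $L^{p_1}_{\omega_1}(\Omega;X)\to L^{p_1}_{\tilde\omega_1}(\Omega;X)$ with $\tilde\omega_1:=\omega_1\omega_0^{-p_1/p_0}$, and $L^p_\omega(\Omega;X)\to L^p_{\tilde\omega}(\Omega;X)$ with $\tilde\omega:=\omega\omega_0^{-p/p_0}$. Using the identity $\omega^{1/p}=\omega_0^{(1-\theta)/p_0}\omega_1^{\theta/p_1}$ from the statement, a short algebraic check yields $\tilde\omega=\tilde\omega_1^{p\theta/p_1}$. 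Since $M_0$ is an isometry of the couple, it suffices to prove the theorem under the normalization $\omega_0\equiv 1$ and $\omega=\omega_1^{p\theta/p_1}$; any constant obtained in this case transfers back verbatim.

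In the normalized setting, I compute $K(t,f;L^{p_0},L^{p_1}_{\omega_1})$ by a pointwise-truncation argument. Assuming without loss of generality $p_0\le p_1$, the infimum defining the $K$-functional is attained (up to a $(p_0,p_1)$-dependent constant) by decompositions $f=f\chi_{E_t}+f\chi_{E_t^c}$, where $E_t$ is a super-level set of $s\mapsto \|f(s)\|_X\,\omega_1(s)^{1/(p_1-p_0)}$ with threshold chosen so as to balance the two contributions. This gives a Holmstedt-type formula for $K(t,f)$ as a sum of two weighted integrals depending only on $\|f(\cdot)\|_X$ and $\omega_1$. Substituting this expression into $\|f\|_{\theta,p}^p=\int_0^\infty(t^{-\theta}K(t,f))^p\,dt/t$, applying Fubini to exchange the $t$- and $s$-integrations, and making the change of variables $t\mapsto \|f(s)\|_X^\alpha\omega_1(s)^\beta$ for appropriate exponents $\alpha,\beta$, the inner $t$-integral collapses to a finite constant $C(p_0,p_1,\theta)$, while the outer integral equals $\int_\Omega\|f(s)\|_X^p\omega_1(s)^{p\theta/p_1}\,d\lambda=\|f\|_{L^p_{\tilde\omega}(\Omega;X)}^p$, as desired.

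The main obstacle is the clean formulation of the Holmstedt-type $K$-functional identity in the weighted Bochner-valued setting. I would handle it by observing that the optimal pointwise truncation depends on $f$ only through the nonnegative scalar function $\|f(\cdot)\|_X$, so that the whole computation reduces to the scalar case on the measure space $(\Omega,\omega_1\,d\lambda)$, with constants depending only on $p_0,p_1,\theta$. Since Step~1 is isometric and Step~2 produces only $(p_0,p_1,\theta)$-dependent constants, the final equivalence constant $C$ is independent of the weights $\omega_0,\omega_1$ and of the Banach space $X$, exactly as claimed.
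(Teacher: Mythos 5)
The paper itself does not prove this statement; it simply quotes Triebel, Theorem 1.18.5, so you are supplying an independent argument. Your Step 1 is correct: $M_0f=\omega_0^{1/p_0}f$ is an isometric isomorphism of the couple and of the target space, the algebra $\tilde\omega=\tilde\omega_1^{p\theta/p_1}$ checks out, and constants transfer. The gaps are all in Step 2, which is where the whole content of the theorem lies. First, the Holmstedt-type claim that truncation along super-level sets of $\|f(\cdot)\|_X\,\omega_1^{1/(p_1-p_0)}$ realizes $K(t,f)$ up to constants is true (for $p_0<p_1$), but you do not prove it: the reduction to the scalar function $\|f(\cdot)\|_X$ only removes the Banach space, while the substantive part is the lower bound, i.e.\ that \emph{some} truncation is within a constant of the infimum over all decompositions; this is exactly the Holmstedt argument and cannot be dismissed as a formulation issue. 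Second, the concluding step ``applying Fubini to exchange the $t$- and $s$-integrations'' is not legitimate as described: $(t^{-\theta}K(t,f))^p$ is the $p$-th power of a sum of an $L^{p_0}$-norm and a weighted $L^{p_1}$-norm, so the two inner integrals enter with outer exponents $p/p_0\ge 1$ and $p/p_1\le 1$ and the integrand is not of the form $\int_\Omega(\cdots)\,d\mu(s)$; in the classical proofs this is precisely where Hardy's inequalities for the decreasing rearrangement enter, or alternatively one works with the ``powered'' functional $\int_\Omega\inf_{a+b=\|f(s)\|_X}\bigl(a^{p_0}\omega_0(s)+t\,b^{p_1}\omega_1(s)\bigr)\,d\mu(s)$, for which Fubini genuinely applies. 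Third, dividing by $p_1-p_0$ silently excludes $p_0=p_1$, a case that still has content (the weights differ) and requires the explicit formula $K(t,f)^p\sim\int_\Omega\|f\|_X^p\min(1,t^p\omega_1)\,d\mu$.

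There is a repair that stays very close to your Step 1 and avoids both the Holmstedt formula and the Hardy inequalities. In the normalized setting ($\omega_0\equiv1$, $p_0<p_1$), the further multiplication $Tf:=\omega_1^{1/(p_1-p_0)}f$ is an isometric isomorphism of the couple $\bigl(L^{p_0}(\mu;X),L^{p_1}_{\omega_1}(\mu;X)\bigr)$ onto the \emph{unweighted} couple $\bigl(L^{p_0}(\nu;X),L^{p_1}(\nu;X)\bigr)$ with $d\nu=\omega_1^{-p_0/(p_1-p_0)}d\mu$, and it maps $L^p_{\tilde\omega}(\mu;X)$ isometrically onto $L^p(\nu;X)$ because $(p-p_0)/(p_1-p_0)=p\theta/p_1$. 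Now Theorem \ref{thm:interpolation01} with $X_0=X_1=X$ (together with $(X,X)_{\theta,p}=X$ up to a constant depending only on $\theta,p$) gives the identity over $(\nu,X)$ with a constant depending only on $p_0,p_1,\theta$, and pulling back through the two isometries yields exactly the claimed statement with weight- and $X$-independent constants; the remaining case $p_0=p_1$ is handled directly by the $\min$-formula for the $K$-functional and the change of variables $u=t\,\omega_1(s)^{1/p}$, where Fubini is applicable.
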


\begin{lemma}\label{lem:max}
Let $\rho>1$, $0<a<\rho$, and $\omega(s,t)=s^a\spk{st}^{-\rho}$. Then
 $$\max_{s\ge 1}\omega(s,t)=
     \begin{cases}
      \spk{t}^{-\rho}&:t\ge (\frac\rho a-1)^{-1/2}\\
      c_{a,\rho}\, t^{-a}&:0<t\le (\frac\rho a-1)^{-1/2}
     \end{cases},\qquad c_{a,\rho}=\Big(1-\frac{a}{\rho}\Big)^{\rho/2}\Big(\frac{\rho}{a}-1\Big)^{-a/2}.$$
\end{lemma}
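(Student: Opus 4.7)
The plan is to treat this as a one-variable calculus problem in $s$, with $t>0$ as a parameter. Writing $\omega(s,t)=s^a(1+s^2t^2)^{-\rho/2}$, I would differentiate with respect to $s$ and factor:
\begin{equation*}
\partial_s\omega(s,t)=s^{a-1}(1+s^2t^2)^{-\rho/2-1}\bigl[a-(\rho-a)s^2t^2\bigr].
\end{equation*}
Since $a<\rho$, the sign of $\partial_s\omega$ on $(0,\infty)$ changes exactly once, from positive to negative, at the unique critical point
\begin{equation*}
s_*=s_*(t)=\frac{1}{t}\sqrt{\frac{a}{\rho-a}}.
\end{equation*}
Hence $s\mapsto\omega(s,t)$ is strictly increasing on $(0,s_*)$ and strictly decreasing on $(s_*,\infty)$, so the maximum of $\omega(\cdot,t)$ over $[1,\infty)$ is either attained at the left endpoint $s=1$ or at the interior point $s_*$.

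The next step is the case distinction according to whether $s_*\ge 1$ or $s_*<1$. Since
\begin{equation*}
s_*\ge 1\iff t\le\sqrt{\tfrac{a}{\rho-a}}=\Bigl(\tfrac{\rho}{a}-1\Bigr)^{-1/2},
\end{equation*}
the two subcases of the lemma correspond exactly to $s_*\le 1$ and $s_*\ge 1$. In the first subcase, $t\ge(\rho/a-1)^{-1/2}$, the function is decreasing on $[1,\infty)$, so the maximum equals $\omega(1,t)=(1+t^2)^{-\rho/2}=\langle t\rangle^{-\rho}$, which is the first branch of the claim.

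In the second subcase, $0<t\le(\rho/a-1)^{-1/2}$, the maximum is $\omega(s_*,t)$, and the only remaining task is bookkeeping. Using $s_*^2 t^2=a/(\rho-a)$ one gets $1+s_*^2t^2=\rho/(\rho-a)$, so
\begin{equation*}
\omega(s_*,t)=s_*^{a}\Bigl(\frac{\rho}{\rho-a}\Bigr)^{-\rho/2}=\Bigl(\frac{a}{\rho-a}\Bigr)^{a/2}\Bigl(1-\frac{a}{\rho}\Bigr)^{\rho/2}t^{-a},
\end{equation*}
and rewriting $(a/(\rho-a))^{a/2}=(\rho/a-1)^{-a/2}$ yields precisely $c_{a,\rho}\,t^{-a}$. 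There is no genuine obstacle; the only thing to watch is that the formula for $\omega(s_*,t)$ matches the constant $c_{a,\rho}$ stated in the lemma, and that the two branches agree at the threshold $t=(\rho/a-1)^{-1/2}$ (which they do, as one checks directly since $\langle t\rangle^{-\rho}$ and $c_{a,\rho}t^{-a}$ coincide there).
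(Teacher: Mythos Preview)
Your proof is correct and follows essentially the same approach as the paper: a direct one-variable calculus optimization locating the unique critical point $s_*(t)$ and splitting according to whether $s_*\ge 1$. The only difference is cosmetic---the paper first reduces to the case $\rho=2$ via $\omega(s,t)=\bigl(s^{2a/\rho}\langle st\rangle^{-2}\bigr)^{\rho/2}$ before differentiating, whereas you compute $\partial_s\omega$ directly for general $\rho$; your route is marginally more direct.
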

\begin{proof}
By writing $\omega(s,t)=\big(s^\alpha\spk{st}^{-2}\big)^{\rho/2}$ with $\alpha=\frac{2a}{\rho}$ it
suffices to prove the theorem for the case $\rho=2$. To this end, one calculates
 $$\omega_s(s,t)=\partial_s \omega(s,t)=as^{a-1}\,\frac{1-(\frac2a-1)s^2t^2}{\spk{st}^4}.$$
Since $\omega_s(s,t)\le 0$ whenever $t\ge (\frac2a-1)^{-1/2}$ and $s\ge 1$,
 $$\max_{s\ge 1}\omega(s,t)=\omega(1,t)=\spk{t}^{-2},\qquad t\ge \Big(\frac2a-1\Big)^{1/2}.$$
If $t< (\frac2a-1)^{-1/2}$, then $\omega_s(s,t)=0$ if and only if $s= t^{-1}(\frac2a-1)^{-1/2}$, hence
 $$\max_{s\ge 1}\omega(s,t)=\omega\Big(t^{-1}\Big(\frac2a-1\Big)^{-1/2},t\Big)=c_{a,2}t^{-a}$$
with $c_{a,2}$ as stated.
\end{proof}

\begin{proof}[Proof of Theorem $\ref{thm:weak-main01}$]
Let $\mu_1,\ldots,\mu_M\in\Sigma$ and $M\in\nz$ be arbitrarily chosen. Define
$k_j(\xi',x_n)=k(\xi',\mu_j,x_n)$ and
 $$\mathbf{k}=(k_1,\ldots,k_M,0,0,\ldots),\qquad
 \mathbf{k}^{(p)}=(\spk{\mu_1}^{1/p}k_1,\ldots,\spk{\mu_M}^{1/p}k_M,0,0,\ldots).$$
Then, for $|\alpha'|\le n-1$,
\begin{align*}
 \|D^{\alpha'}_{\xi'}\mathbf{k}^{(p)}(\xi',x_n)\|_{\ell^\infty(\cz)}
 &= \max_{1\le j\le M}\spk{\mu_j}^{1/p}|D^{\alpha'}_{\xi'}k_j(\xi',x_n)|\\
 &\le C\max_{1\le j\le M}\|k_j\|'_{0,(n+1)}\spk{\mu_j}^{1/p}\spk{\xi',\mu_j}^{-|\alpha'|}\spk{\spk{\xi',\mu_j}x_n}^{-2}\\
 &\le C\|k\|'_{0,(n+1)}\spk{\xi'}^{-|\alpha'|}\max_{s\ge 1} s^{1/p}\spk{s x_n}^{-2};
\end{align*}
in fact, the first inequality is proven in the same way as \eqref{eq:estimate01} above, the last inequality  holds due to Lemma
\ref{lem:uniform}. Using that $\ell^\infty(\cz)$ is isometrically embedded in $\scrL(\ell^2(\cz))$ and Lemma \ref{lem:max}, it follows
that
 $$\|\mathbf{k}^{(p)}(\cdot,x_n)\|_{\calF(\scrL(\ell^2(\cz)))}\le C\|k\|'_{0,(n+1)} x_n^{-1/p},\qquad x_n>0,$$
and therefore
\begin{align}\label{eq:est00}
 \|\mathbf{k}^{(p)}(D',x_n)\|_{\scrL(L^q(\rz^{n-1};\ell^2(\cz)))}\le C\|k\|'_{0,(n+1)}x_n^{-1/p},\qquad x_n>0;
\end{align}
note that the constant $C$ does not depend on $M$, $\mu_j$, $x_n$, and $k$.
Hence, as explained after Lemma \ref{lem:weak-lp}, the Poisson operator
\begin{align}\label{eq:est01}
 \op(\mathbf{k}^{(p)}): L^q(\rz^{n-1};\ell^2(\cz)) \lra L^{p,\infty}(\rz_+;L^q(\rz^{n-1};\ell^2(\cz))).
\end{align}
is continuous; its operator-norm can be estimated from above by $C_{pq}\|k\|_{0,(n+1)}$ with a constant $C_{pq}$ which does not depend on $M$, $\mu_j$, and $k$.

Given arbitrary $g_j\in L^q(\rz^{n-1})$, $1\le j\le M$, we have
$\mathbf{g}:=(g_1,\ldots,g_M,0,0,\ldots)\in L^q(\rz^{n-1};\ell^2(\cz))$ and
\begin{align*}
 \Big\|\sum_{j=1}^M &\varepsilon_j \spk{\mu_j}^{1/p}\op(k_j) g_j\Big\|_{L^2(\Omega,L^{p,\infty}(\rz_+;L^q(\rz^{n-1})))}\\
 &=\big\|(\spk{\mu_1}^{1/p}\op(k_1)g_1,\ldots,\spk{\mu_M}^{1/p}\op(k_M)g_M,0,0,\ldots)
 \big\|_{\varepsilon(L^{p,\infty}(\rz_+;L^q(\rz^{n-1})))}\\
 &\cong \big\|\op(\mathbf{k}^{(p)})\mathbf{g}\big\|_{L^{p,\infty}(\rz_+;L^q(\rz^{n-1};\ell^2(\cz)))};
\end{align*}
here we have used that
 $$\varepsilon(L^{p,\infty}(\rz_+;L^q(\rz^{n-1}))
     \cong L^{p,\infty}(\rz_+;\eps(L^q(\rz^{n-1})))
     \cong L^{p,\infty}(\rz_+;L^q(\rz^{n-1};\ell^2(\cz))).$$
By \eqref{eq:est01} and since $\varepsilon(L^q(\rz^{n-1}))\cong L^q(\rz^{n-1};\ell^2(\cz))$,
\begin{align*}
 \big\|\op(\mathbf{k}^{(p)})\mathbf{g}\big\|_{L^p(\rz_+;L^q(\rz^{n-1};\ell^2(\cz)))}
 &\le C\|k\|'_{0,(n+1)}\big\|\mathbf{g}\big\|_{L^q(\rz^n_+;\ell^2(\cz))}\\
 &\cong C\|k\|'_{0,(n+1)}\big\|(g_1,\ldots,g_M,0,0,\ldots)\big\|_{\varepsilon(L^q(\rz^{n-1}))}\\
 &=C\|k\|'_{0,(n+1)}
  \Big\|\sum_{n=1}^M \varepsilon_n g_n\Big\|_{L^2(\Omega;L^q(\rz^{n-1}))}
\end{align*}
with a constant not depending on $M$ and the $\mu_j$. This proves the first claim of the theorem.
For the second claim define weight-functions $\omega_p:\nz\to(0,\infty)$ by
 $$\omega_p(j)=\begin{cases}
    \spk{\mu_j}^{-1/2p}&:1\le j\le M,\\ 1&:\text{otherwise.}
 \end{cases}$$
Then \eqref{eq:est01} is equivalent to the continuity of
\begin{align}\label{eq:est02}
 \op(\mathbf{k}): L^q(\rz^{n-1};\ell^2_{\omega_p}(\cz)) \lra
 L^{p,\infty}(\rz_+;L^q(\rz^{n-1};\ell^2(\cz)))
\end{align}
with operator-norm equal that of $\op(\mathbf{k}^{(p)})$.
For given $p$ choose arbitrary $p_0,p_1\in(1,\infty)$ and $\theta\in(0,1)$ such that $\frac{1}{p}=\frac{1-\theta}{p_0}+\frac{\theta}{p_1}$. Then from \eqref{eq:est02} and Theorem \ref{thm:interpolation01} we find
\begin{align}\label{eq:est03}
 \op(\mathbf{k}): L^q(\rz^{n-1};(\ell^2_{\omega_{p_0}}(\cz),\ell^2_{\omega_{p_1}}(\cz))_{\theta,p}) \lra L^{p}(\rz_+;L^q(\rz^{n-1};\ell^2(\cz))),
\end{align}
where the operator-norm is bounded by $C\|k\|'_{0,(n+1)}$ with a constant that does not depend on $M$, $\mu_j$, and $k$. Moreover, we have used that
 $$(L^{p_0,\infty}(\rz_+;X),L^{p_1,\infty}(\rz_+;X))_{\theta,p}=L^p(\rz_+;X)$$
for arbitrary Banach spaces $X$. In case $p\ge2$, we have
 $$(\ell^2_{\omega_{p_0}}(\cz),\ell^2_{\omega_{p_1}}(\cz))_{\theta,2}\subset
    (\ell^2_{\omega_{p_0}}(\cz),\ell^2_{\omega_{p_1}}(\cz))_{\theta,p}$$
where the corresponding norms satisfy $\|\cdot\|_{\theta,q}\le C\|\cdot\|_{\theta,2}$ with a constant $C=C(\theta,q)$. By Theorem \ref{thm:interpolation02} (with $p_0=p_1=p=2$),
 $$(\ell^2_{\omega_{p_0}}(\cz),\ell^2_{\omega_{p_1}}(\cz))_{\theta,2}=
   \ell^2_{\omega_0^{1-\theta}\omega_1^\theta}(\cz)=
   \ell^2_{\omega_p}(\cz)$$
and
\begin{align*}
 \op(\mathbf{k}): L^q(\rz^{n-1};\ell^2_{\omega_p}(\cz)) \lra L^{q}(\rz_+;L^q(\rz^{n-1};\ell^2(\cz)))
\end{align*}
with operator-norm bounded by $C\|k\|'_{0,(n+1)}$ with a constant that does not depend on $M$, $\mu_j$, and $k$. The latter is equivalent to the continuity of
\begin{align*}
 \op(\mathbf{k}^{(p)}): L^q(\rz^{n-1};\ell^2(\cz)) \lra L^{p}(\rz_+;L^q(\rz^{n-1};\ell^2(\cz)))
\end{align*}
with same operator-norm as $\op(\mathbf{k})$. Then we can repeat the reasoning for the first claim to obtain the second claim of the theorem.
\end{proof}

\forget{
BLOEDSINN
Observe the following slight extension of Theorem $\ref{thm:weak-main01}:$

\begin{theorem}
Let $1<p,q<\infty$ and $k\in S^{0}_{P,w}(\rz^{n-1}\times\Sigma)$.
Then
 $$\scrK=\Big\{\spk{\mu}^{\frac1p}\op(k)(\mu)\mid \mu\in\Sigma\Big\}\subset
    \scrL(L^q(\rz^{n-1}),L^{\wt p,\infty}(\rz_+;L^q(\rz^{n-1}))$$
is $R$-bounded whenever $1<\wt p\le p$.
\end{theorem}
\begin{proof}
Lemma \ref{lem:max} implies that
 $$\max_{s\ge 1}s^a\spk{st}^{-\rho}\lesssim t^{-b}\qquad
    \forall\; 0<a\le b<\rho.$$
Therefore, the factor $x_n^{-1/p}$ in estimate \eqref{eq:est00} in the proof of Theorem \ref{thm:weak-main01} can be substituted by the factor
$x_n^{-1/\wt{p}}$. Then proceed as before with $\wt p$ in place of $p$.
\end{proof}
}

The proof of Theorem $\ref{thm:weak-main01}$ can also be easily modified to obtain the following version with $\epsilon$-loss:

\begin{theorem}\label{rem:weak-version-01}
Let $1<p<2$, $1<q<+\infty$, and $\eps>0$.
If $k\in S^{0}_{P,w}(\rz^{n-1}\times\Sigma)$ then
$$\scrK:=\big\{\spk{\mu}^{\frac1p-\eps}\op(k)(\mu)\mid \mu\in\Sigma\big\}\subset
    \scrL(L^q(\rz^{n-1}),L^p(\rz_+;L^q(\rz^{n-1}))$$
is $R$-bounded. There exists a constant $C\ge 0$ not depending on $k$ such that
 $$\mathcal{R}(\scrK)\le C\|k\|'_{0,(n+1)}.$$
\end{theorem}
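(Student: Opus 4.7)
The plan is to run the argument of Theorem \ref{thm:weak-main01} with the scaling exponent $1/p$ replaced by $1/p-\eps$. In that proof the symbol-norm $\|\mathbf{k}^{(p)}(\cdot,x_n)\|_{\calF(\scrL(\ell^2))}$ was dominated by a constant multiple of $x_n^{-1/p}$ near the origin, which only ensures weak-$L^p$ behaviour in $x_n$; the passage to strong $L^p$ went through real interpolation in weighted $\ell^2$-spaces and worked only for $p\ge2$. The weakened scaling $\spk{\mu}^{1/p-\eps}$ produces, by the same calculation, a pointwise bound of order $x_n^{-(1/p-\eps)}$, which \emph{is} $p$-integrable near the origin since $(1/p-\eps)p=1-\eps p<1$. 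Strong $L^p$-continuity therefore follows directly from Proposition \ref{thm:boundedness}, bypassing the interpolation step entirely. Without loss of generality we may assume $\eps<1/p$, since a larger $\eps$ only weakens the assertion.

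Concretely, fix $M\in\nz$ and $\mu_1,\ldots,\mu_M\in\Sigma$, put $k_j(\xi',x_n):=k(\xi',\mu_j,x_n)$, and set
$$\mathbf{k}^{(p,\eps)}:=(\spk{\mu_1}^{1/p-\eps}k_1,\ldots,\spk{\mu_M}^{1/p-\eps}k_M,0,0,\ldots).$$
The reasoning leading to \eqref{eq:est00}, with $\spk{\mu_j}^{1/p-\eps}\le\spk{\xi',\mu_j}^{1/p-\eps}$ in place of $\spk{\mu_j}^{1/p}\le\spk{\xi',\mu_j}^{1/p}$, yields for $|\alpha'|\le n-1$
$$\|D^{\alpha'}_{\xi'}\mathbf{k}^{(p,\eps)}(\xi',x_n)\|_{\ell^\infty(\cz)}\le C\|k\|'_{0,(n+1)}\spk{\xi'}^{-|\alpha'|}\max_{s\ge1}s^{1/p-\eps}\spk{sx_n}^{-2}.$$
By Lemma \ref{lem:max} with $a=1/p-\eps$ and $\rho=2$ (note $a<\rho$ for $p>1/2$), the maximum is bounded by $\spk{x_n}^{-2}$ for $x_n\ge c$ and by a constant times $x_n^{-(1/p-\eps)}$ for $x_n<c$. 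Both functions are $p$-integrable on the respective half-lines: $2p>1$ since $p>1/2$, and $(1/p-\eps)p<1$ by the choice of $\eps$. Consequently
$$\bigl\|x_n\mapsto\|\mathbf{k}^{(p,\eps)}(\cdot,x_n)\|_{\calF(\scrL(\ell^2(\cz)))}\bigr\|_{L^p(\rz_+)}\le C\|k\|'_{0,(n+1)}$$
with a constant independent of $M$, of the chosen $\mu_j$, and of $k$.

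Proposition \ref{thm:boundedness} applied with $H=\ell^2(\cz)$ then gives the continuity of $\op(\mathbf{k}^{(p,\eps)})\colon L^q(\rz^{n-1};\ell^2(\cz))\to L^p(\rz_+;L^q(\rz^{n-1};\ell^2(\cz)))$ with operator norm bounded by $C\|k\|'_{0,(n+1)}$. The identifications $\varepsilon(L^q(\rz^{n-1}))\cong L^q(\rz^{n-1};\ell^2(\cz))$ and, via Lemma \ref{lem:varepsilon-of-lp}, $\varepsilon(L^p(\rz_+;L^q(\rz^{n-1})))\cong L^p(\rz_+;L^q(\rz^{n-1};\ell^2(\cz)))$, together with Remark \ref{rem:rbounded-bounded}, translate this uniform boundedness into the claimed $R$-boundedness of $\scrK$ with the stated estimate on $\mathcal{R}(\scrK)$. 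The only substantive step — and the place where the $\eps$-loss is essential — is the $L^p(\rz_+)$-integrability of the bound $x_n^{-(1/p-\eps)}$ near $x_n=0$; everything else is a direct transcription of the scheme used for Theorem \ref{thm:weak-main01}.
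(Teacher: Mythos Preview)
Your proof is correct and follows precisely the strategy sketched in the paper: replace $\spk{\mu_j}^{1/p}$ by $\spk{\mu_j}^{1/p-\eps}$ in the construction of $\mathbf{k}^{(p)}$, observe via Lemma~\ref{lem:max} that $\max_{s\ge1}s^{1/p-\eps}\spk{sx_n}^{-2}\lesssim\min\{x_n^{-1/p+\eps},\spk{x_n}^{-2}\}\in L^p(\rz_+)$, and then invoke Proposition~\ref{thm:boundedness} directly (rather than Lemma~\ref{lem:weak-lp}) to obtain continuity into the strong $L^p$-space, bypassing the interpolation argument. You have simply written out in full what the paper indicates in two sentences.
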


In fact, it suffices to observe that, by Lemma \ref{lem:max},
 $$\max_{s\ge 1}s^{\frac1p-\eps}\spk{st}^{-2}\lesssim
    \min\big\{t^{-\frac1p+\eps},\spk{t}^{-2}\big\}\in L^p(\rz_+).$$
Hence if in the definition of $\mathbf{k}^{(p)}$ in the beginning of the proof of Theorem $\ref{thm:weak-main01}$ one replaces $\spk{\mu_j}^{1/p}$ by
$\spk{\mu_j}^{1/p-\eps}$, in \eqref{eq:est01} one can replace the weak $L^p$-space by the usual $L^p$-space, cf. Proposition \ref{thm:boundedness}; then one proceeds as before.

\subsection{$R$-boundedness in classical Sobolev spaces}
\label{subsec:04.3}

In this section we derive a result complementary to Theorem \ref{thm:r-boundedness-positiv-s}. We shall employ the usual Bessel potential spaces $H^s_p(\rz_+;X)$ and $H^s_p(\rz^n_+)$ with $s\in\nz_0$. Recall that
 $$H^s_p(\rz^n_+)=H^s_p(\rz_+;L^p(\rz^{n-1}))\cap L^p(\rz_+;H^s_p(\rz^{n-1})),\qquad s\in\nz_0.$$
We then define the weak spaces
 $$H^s_{p,\infty}(\rz^n_+)=H^s_{p,\infty}(\rz_+;L^p(\rz^{n-1}))\cap L^{p,\infty}(\rz_+;H^s_p(\rz^{n-1})),\qquad s\in\nz_0,$$
where $H^s_{p,\infty}(\rz_+;X)$ consists of those functions whose derivatives up to order $s$ belong to $L^{p,\infty}(\rz_+;X)$. Similarly as in \eqref{eq:weight} we define the anisotropic spaces $H^{s,\sigma}_p(\rz^n_+)$ and $H^{s,\sigma}_{p,\infty}(\rz^n_+)$ with $\sigma\in\rz$.

\begin{theorem}\label{thm:weak-05}
Let $1<p,q<\infty$, $t\in\rz$, $s\in\nz_0$, $d\le 0$, and $0\le\theta\le 1$.
If $k\in S^{d}_{P}(\rz^{n-1}\times\Sigma)$ then
\begin{align*}
  \scrK:=&\big\{\spk{\mu}^{-s-\theta d+\frac1p}\op(k)(\mu)\mid \mu\in\Sigma\big\}\\
  \subset& \,\scrL(H^{s+t}_q(\rz^{n-1}),H^s_{p,\infty}(\rz_+;H^{t-(1-\theta)d}_q(\rz^{n-1}))
\end{align*}
is $R$-bounded. If $p\ge 2$ then $\scrK$ is also an $R$-bounded subset of
 $$\scrL(H^{s+t}_q(\rz^{n-1}),H^s_{p}(\rz_+;H^{t-(1-\theta)d}_q(\rz^{n-1})).$$
\end{theorem}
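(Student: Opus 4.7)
The plan is to reduce the statement to Theorem~\ref{thm:weak-main01} by differentiating in $x_n$ and composing with Bessel potentials in $x'$. Since $s\in\N_0$, the norm equivalence
\[
\|u\|_{H^s_{p,\infty}(\R_+;Y)}\sim \sum_{j=0}^{s}\|D^{j}_{x_n}u\|_{L^{p,\infty}(\R_+;Y)}
\]
(and its $L^p$-analogue when $p\ge 2$), combined with the identity $D^{j}_{x_n}\op(k)(\mu)=\op(D^{j}_{x_n}k)(\mu)$ and the intertwining $\spk{D'}^{r}\op(k)(\mu)=\op(\spk{\xi'}^{r}k)(\mu)=\op(k)(\mu)\spk{D'}^{r}$, reduces the problem, for each $j\in\{0,\ldots,s\}$, to the $R$-boundedness of
\[
\big\{\spk{\mu}^{1/p-s-\theta d}\op(k_j)(\mu)\mid \mu\in\Sigma\big\}\subset \scrL\big(L^q(\R^{n-1}),L^{p,\infty}(\R_+;L^q(\R^{n-1}))\big),
\]
where $k_j(\xi',\mu;x_n):=\spk{\xi'}^{-s-(1-\theta)d}D^{j}_{x_n}k(\xi',\mu;x_n)$, together with the analogous $L^p$-version for $p\ge 2$.

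Writing $k(\xi',\mu;x_n)=\wt k(\xi',\mu;\spk{\xi',\mu}x_n)$ with $\wt k\in S^d(\R^{n-1}\times\Sigma;\scrS(\R_+))$ shows that $D^{j}_{x_n}k\in S^{d+j}_P(\R^{n-1}\times\Sigma)$. Theorem~\ref{thm:poisson-characterization-01}(c), used with $p=\infty$ and separately with the extra factor $x_n^{2}$, together with the Leibniz rule applied to the factor $\spk{\xi'}^{-s-(1-\theta)d}$, yields for $|\alpha'|\le n-1$
\[
|\xi'|^{|\alpha'|}|D^{\alpha'}_{\xi'}k_j(\xi',\mu;x_n)|\lesssim \|k\|_{d,(N)}\,\spk{\xi'}^{-s-(1-\theta)d}\spk{\xi',\mu}^{d+j}\spk{\spk{\xi',\mu}x_n}^{-2}.
\]
Imitating the proof of Theorem~\ref{thm:weak-main01}, I would pick finite tuples $\mu_1,\ldots,\mu_M\in\Sigma$, form $\mathbf{k}_j^{(p)}:=(\spk{\mu_i}^{1/p-s-\theta d}k_j(\cdot,\mu_i,\cdot))_{i=1}^M$ (padded by zeros), and use the embedding $\ell^\infty\hookrightarrow\scrL(\ell^2)$. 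Via Corollary~\ref{cor:Mikhlin}, the required $R$-boundedness then reduces to the pointwise estimate $\|\mathbf{k}_j^{(p)}(\cdot,x_n)\|_{\calF(\scrL(\ell^\infty))}\lesssim\|k\|_{d,(N)}x_n^{-1/p}$, which in turn reduces to the scalar supremum bound
\[
\sup_{\xi'\neq 0,\;\mu\in\Sigma} \spk{\mu}^{1/p-s-\theta d}\spk{\xi'}^{-s-(1-\theta)d}\spk{\xi',\mu}^{d+j}\spk{\spk{\xi',\mu}x_n}^{-2}\lesssim x_n^{-1/p}.
\]

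To verify this bound, I would set $X:=\spk{\xi'}$, $M:=\spk{\mu}$, $T:=\spk{\xi',\mu}\sim\max(X,M)$ and split into the regimes $X\ge M$ (so $T\sim X$) and $M>X$ (so $T\sim M$). In the first regime, maximising over $M\in[1,X]$ leaves $X^{a}\spk{Xx_n}^{-2}$ with $a\in\{1/p-2s+j,\,-s+\theta d+j\}$ (the choice depending on the sign of $1/p-s-\theta d$). In the second regime, one further splits according as $s+(1-\theta)d\ge 0$ or $<0$: maximising over $X\in[1,M]$ leaves $M^{a}\spk{Mx_n}^{-2}$ with $a\in\{1/p-s+(1-\theta)d+j,\,1/p-2s+j\}$. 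Since $j\le s$ and $d\le 0$, each of these exponents satisfies $a\le 1/p<2$, so Lemma~\ref{lem:max} applied with $\rho=2$ (or direct inspection when $a\le 0$) delivers the bound $\lesssim x_n^{-1/p}$ uniformly. Lemma~\ref{lem:weak-lp} and the identification $\varepsilon(L^{p,\infty}(\R_+;L^q))\cong L^{p,\infty}(\R_+;L^q(\R^{n-1};\ell^2))$ (from Lemma~\ref{lem:varepsilon-of-lp} and Corollary~\ref{cor:interpolation}) then yield the claimed $R$-boundedness into $L^{p,\infty}$, with $R$-bound controlled by $C\|k\|_{d,(N)}$ independently of the chosen tuple.

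For $p\ge 2$, I would upgrade $L^{p,\infty}$ to $L^{p}$ verbatim as in the concluding interpolation step of Theorem~\ref{thm:weak-main01}: introduce the weights $\omega_{p_0},\omega_{p_1}$ on $\N$ that depend on $\spk{\mu_i}$, apply Theorem~\ref{thm:interpolation01} to two $L^{p_\nu,\infty}$-estimates $(\nu=0,1)$ delivered by the preceding step, invoke Theorem~\ref{thm:interpolation02} together with the embedding $(\ell^{2}_{\omega_{p_0}},\ell^{2}_{\omega_{p_1}})_{\theta,2}\hookrightarrow(\ell^{2}_{\omega_{p_0}},\ell^{2}_{\omega_{p_1}})_{\theta,p}$ valid for $p\ge 2$, and finally transfer back to an $L^p$-in-$x_n$ estimate for $\op(\mathbf{k}_j^{(p)})$. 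The technical heart of the proof is the scalar supremum bound in the third paragraph, where the three brackets $X$, $M$, $T$ interact in a way that forces the regime-splitting; everything else is a routine rerun of the arguments already carried out for Theorems~\ref{thm:poisson-rbounded-01} and~\ref{thm:weak-main01}.
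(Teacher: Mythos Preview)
Your argument is correct, but it takes a more laborious route than the paper's. The paper proceeds modularly: first it factors $\op(k)(\mu)=\op(k_0)(\mu)\,\spk{D',\mu}^{d}$ with $k_0\in S^0_P$, and uses Mikhlin to see that $\{\spk{\mu}^{-\theta d}\spk{D',\mu}^{d}\}$ is $R$-bounded from $H^{t}_q$ to $H^{t-(1-\theta)d}_q$; this reduces to $d=\theta=0$ (and $t=0$ by conjugation). Then it writes $D^{\ell}_{x_n}\op(k)(\mu)=\op(k_\ell)(\mu)\,\spk{D',\mu}^{\ell}$ with $k_\ell\in S^0_P$ and invokes Remark~\ref{lem:r-boundedness-pseudo-02} for the $R$-boundedness of $\{\spk{\mu}^{-s}\spk{D',\mu}^{\ell}\}$ in $\scrL(H^s_q,L^q)$; at that point Theorem~\ref{thm:weak-main01} applies \emph{as a black box} to $\op(k_\ell)$, and no new scalar supremum analysis is needed.

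By contrast, you keep the factor $\spk{\xi'}^{-s-(1-\theta)d}$ and the order-$(d+j)$ piece together and re-enter the machinery of Theorem~\ref{thm:weak-main01} at the level of the pointwise Mikhlin bound, which forces the four-case regime splitting on $(X,M)$. This works, and your verification that each exponent $a$ satisfies $a\le 1/p$ is correct, so Lemma~\ref{lem:max} (plus the trivial observation $\spk{t}^{-2}\lesssim t^{-1/p}$ when $a\le 0$) delivers the bound. The $L^p$-upgrade for $p\ge 2$ indeed goes through verbatim because the $p$-independent factor $\spk{\mu_i}^{-s-\theta d}$ can be absorbed into $k_j$, after which the weighted-$\ell^2$ interpolation of Theorem~\ref{thm:weak-main01} applies unchanged. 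One minor point: the identification $\varepsilon(L^{p,\infty}(\R_+;L^q))\cong L^{p,\infty}(\R_+;L^q(\R^{n-1};\ell^2))$ is not literally covered by Lemma~\ref{lem:varepsilon-of-lp} and Corollary~\ref{cor:interpolation} (which treat $L^p$ and the $H/B/F$-scales); it is, however, exactly what the paper uses without further comment in the proof of Theorem~\ref{thm:weak-main01}. The paper's factorisation avoids all the case analysis and is the cleaner way to organise the proof.
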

\begin{proof}
We write
 $$\op(k)(\mu)=\op(k)(\mu)\spk{D',\mu}^{-d}\spk{D',\mu}^d=\op(k_0)(\mu)\spk{D',\mu}^d,$$
where $k_0(\xi',\mu,x_n):=\spk{\xi',\mu}^{-d}k(\xi',\mu,x_n)$ belongs to $S^{0}_{P}(\rz^{n-1}\times\Sigma)$.
Since
 $$\spk{\mu}^{-\theta d}\spk{\xi',\mu}^d\spk{\xi'}^{-(1-\theta) d}\in S^0(\rz^{n-1})$$
uniformly in $\mu$,  by Mikhlin's theorem, $\{\spk{\mu}^{-\theta d}\spk{D',\mu}^d\mid \mu\in\Sigma\}$ is an $R$-bounded subset of $\scrL(H^{t}_q(\rz^{n-1}),H^{t-(1-\theta)d}_q(\rz^{n-1}))$. Thus  it remains to verify the claim in case $d=0$ and $\theta=0$. By conjugation with $\spk{D'}^t$ it is also sufficient to consider $t=0$.

By definition of the norm in $H^s_{p,w}(\rz_+;X)$ the result follows from the $R$-boundedness of the sets
 $$\scrK_\ell:=\Big\{\spk{\mu}^{-s-\theta d+\frac1p}D_{x_n}^\ell\op(k)(\mu)\mid \mu\in\Sigma\Big\},\qquad 0\le \ell\le s,$$
as a subset of $\scrL(H^s_q(\rz^{n-1}),L^{p,\infty}(\rz_+;L^q(\rz^{n-1}))$. However, since $k(\xi,\mu,x_n)=\wt k(\xi,\mu,\spk{\xi',\mu}x_n)$, we have
 $$D_{x_n}^\ell\op(k)(\mu)=\op(k_\ell)(\mu)\spk{D',\mu}^\ell,\qquad
    k_{\ell}\in S^{d}_{P}(\rz^{n-1}\times\Sigma).$$
Moreover, the  operator  family $\{\spk{\mu}^{-s}\spk{D',\mu}^\ell\mid\mu\in\Sigma\}$ is an $R$-bounded subset of
$\scrL(H^s_q(\rz^{n-1}),L^q(\rz^{n-1}))$ due to Lemma \ref{lem:r-boundedness-pseudo-02}.
Then the result follows from Theorem~\ref{thm:weak-main01}.
\end{proof}

\begin{corollary}\label{cor:A01}
Let $1<p<\infty$, $t\in\rz$, $s\in\nz_0$, $d\le 0$, and $0\le\theta\le 1$.
If $k\in S^{d}_{P}(\rz^{n-1}\times\Sigma)$ then
 $$\scrK:=\Big\{\spk{\mu}^{-s-\theta d+\frac1p}\op(k)(\mu)\mid \mu\in\Sigma\Big\}\subset
    \scrL(H^{s+t}_p(\rz^{n-1}),H^{s,t-(1-\theta)d}_{p,\infty}(\rz^{n}_+))$$
is $R$-bounded. If $p\ge 2$ then $\scrK$ is also an $R$-bounded subset of
 $$\scrL(H^{s+t}_p(\rz^{n-1}),H^{s,t-(1-\theta)d}_p(\rz^{n}_+)).$$
\end{corollary}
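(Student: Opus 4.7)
The plan is to deduce the corollary from two applications of Theorem \ref{thm:weak-05}, after recognizing that the anisotropic target space splits as an intersection of two vector-valued weak Sobolev spaces.

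Set $\sigma := t-(1-\theta)d$ and unfold the definition $H^{s,\sigma}_{p,\infty}(\rz^n_+) = \spk{D'}^{-\sigma}H^s_{p,\infty}(\rz^n_+)$. Since $\spk{D'}^{-\sigma}$ is an isomorphism acting only in the $x'$-variable, and hence commutes with the intersection defining $H^s_{p,\infty}(\rz^n_+)$, I would first observe that
\[
H^{s,\sigma}_{p,\infty}(\rz^n_+) \,=\, H^s_{p,\infty}(\rz_+;H^\sigma_p(\rz^{n-1})) \,\cap\, L^{p,\infty}(\rz_+;H^{s+\sigma}_p(\rz^{n-1}))
\]
with equivalent norms. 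Consequently, the $R$-boundedness of $\scrK$ into the anisotropic target reduces to the $R$-boundedness into each of these two factors separately.

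For the first factor, an application of Theorem \ref{thm:weak-05} with $q$ set equal to $p$ and unchanged parameters $s,t,d,\theta$ immediately gives that $\scrK$ is $R$-bounded in $\scrL(H^{s+t}_p(\rz^{n-1}),H^s_{p,\infty}(\rz_+;H^\sigma_p(\rz^{n-1})))$ with the required multiplier $\spk{\mu}^{-s-\theta d+1/p}$. For the second factor, I would apply Theorem \ref{thm:weak-05} a second time, but now with the parameters $s$ and $t$ of that theorem replaced by $0$ and $s+t$ respectively (still $q=p$, same $d$, $\theta$); recalling $H^0_{p,\infty}(\rz_+;\cdot)=L^{p,\infty}(\rz_+;\cdot)$, this yields the $R$-boundedness of $\{\spk{\mu}^{-\theta d+1/p}\op(k)(\mu):\mu\in\Sigma\}$ into $L^{p,\infty}(\rz_+;H^{s+\sigma}_p(\rz^{n-1}))$. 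Since $s\ge 0$ implies $\spk{\mu}^{-s}\le 1$ uniformly for $\mu\in\Sigma$, multiplying this family by the uniformly bounded scalar $\spk{\mu}^{-s}$ preserves $R$-boundedness and upgrades the multiplier to the required $\spk{\mu}^{-s-\theta d+1/p}$.

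The $p\ge 2$ statement follows identically, using the second (strong-$L^p$) conclusion of Theorem \ref{thm:weak-05} in both applications together with the analogous decomposition $H^{s,\sigma}_p(\rz^n_+)=H^s_p(\rz_+;H^\sigma_p(\rz^{n-1}))\cap L^p(\rz_+;H^{s+\sigma}_p(\rz^{n-1}))$. No real obstacle appears: the crucial point is simply that the multiplier $\spk{\mu}^{-s-\theta d+1/p}$ claimed by the corollary is no larger than the multiplier $\spk{\mu}^{-\theta d+1/p}$ produced by Theorem \ref{thm:weak-05} when one sets its first parameter to zero, so the extra $\spk{\mu}^{-s}$ decay needed for the $L^{p,\infty}$-factor comes for free from the additional source regularity $H^{s+t}_p$.
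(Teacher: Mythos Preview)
Your proof is correct and follows essentially the same route as the paper: decompose the anisotropic target space as an intersection of two vector-valued spaces and invoke Theorem~\ref{thm:weak-05} once for each factor, absorbing the mismatch in the second application by the uniformly bounded factor $\spk{\mu}^{-s}$. The only cosmetic difference is that the paper first reduces to $d=\theta=t=0$ (so that $\sigma=0$ and the target is just $H^s_{p,\infty}(\rz^n_+)$), whereas you apply Theorem~\ref{thm:weak-05} directly with the general parameters; both are equally valid.
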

\begin{proof}
Again it is no loss of generality to restrict ourselves to the case $d=\theta=t=0$.

Theorem \ref{thm:weak-05} yields the $R$-boundedness of $\scrK$ in
$\scrL(H^s_p(\rz^{n-1}),L^{p,\infty}(\rz_+;H^s_p(\rz^{n-1}))$ as well as
$\scrL(H^s_p(\rz^{n-1}),H^s_{p,\infty}(\rz_+;L^p(\rz^{n-1}))$.
In order to obtain the claim, it suffices to pass to the intersection of these spaces.
\end{proof}

\forget{
Using interpolation, the previous Corollary \ref{cor:A01} can be extended further. To this end define
the weak Besov spaces
 $$B^s_{p,q,\infty}(\rz^n_+)=(H^{s_0}_{p,\infty}(\rz^n_+),H^{s_1}_{p,\infty}(\rz^n_+))_{\theta,q},\qquad s>0,$$
where $0<s_0<s_1<+\infty$ and $s=(1-\theta)s_0+\theta s_1$, and the weak Triebel-Lizorkin spaces
 $$F^s_{p,q,\infty}(\rz^n_+)=[H^{s_0}_{p_0,\infty}(\rz^n_+),B^{s_1}_{p_1,q_1,\infty}(\rz^n_+)]_{\theta},\qquad s>0,$$
 where $0<s_0<s_1<+\infty$, $s=(1-\theta)s_0+\theta s_1$, $\frac1p=\frac{1-\theta}{p_0}+\frac{\theta}{p_1}$ with $1<p_0,p_1<+\infty$, and $\frac1q=\frac{1-\theta}{2}+\frac{\theta}{p_1}$. Similarly to \ref{eq:weight} we then define the anisotropic spaces $B^{s,\sigma}_{p,q,\infty}(\rz^n_+)$ and $F^{s,\sigma}_{p,q,\infty}(\rz^n_+)$ with $\sigma\in\rz$.

\begin{corollary}\label{cor:A02}
Let $1<p<\infty$, $1\le q\le\infty$, $t\in\rz$, $s\in\N_0$, $d\le 0$, and $0\le\theta\le 1$.
If $k\in S^{d}_{P}(\rz^{n-1}\times\Sigma)$ then
$\big\{\spk{\mu}^{-s-\theta d+\frac1p}\op(k)(\mu)\mid \mu\in\Sigma\big\}$
is an $R$-bounded subset of
\begin{align*}
  \scrL&(H^{s+t}_p(\rz^{n-1}),H^{s,t-(1-\theta)d}_{p,\infty}(\rz^{n}_+)), \\
  \scrL&(B^{s+t}_{pq}(\rz^{n-1}),B^{s,t-(1-\theta)d}_{p,q,\infty}(\rz^{n}_+))\qquad (s>0), \\
  \scrL&(F^{s+t}_{pq}(\rz^{n-1}),F^{s,t-(1-\theta)d}_{p,q,\infty}(\rz^{n}_+))\qquad (s>0,1<q<+\infty).
\end{align*}
In case $p\ge 2$ one can replace the weak spaces with the usual Bessel potential, Besov, and Triebel-Lizorkin spaces,
respectively.
\end{corollary}
}

\section{$R$-boundedness in other function spaces}\label{sec:05}

Theorem \ref{thm:weak-main01} together with certain invariance properties of weak Poisson operators allows to obtain
$R$-boundedness in various other scales of function spaces.

\subsection{Spaces with totally characteristic normal derivative}

Given a Poisson operator $K_\mu$ with symbol $k\in S^d_{P,w}(\rz^{n-1}\times\Sigma)$ we first observe that $(x_n D_{x_n})^\ell K_\mu$ is a Poisson operator of the same kind; in fact if $k(\xi',\mu,x_n)=\wt{k}(\xi',\mu,\spk{\xi',\mu}x_n)$, cf. Definition \ref{def:weak-poisson-symbols}, then $(x_n D_{x_n})^\ell K_\mu$ has the symbol
 $$k_\ell(\xi',\mu,x_n):=\big((x_n D_{x_n})^\ell\wt{k}\big)(\xi',\mu,\spk{\xi',\mu}x_n).$$

\begin{definition}\label{def:tot-char-01}
Let $X$ be a Banach space and $s\in\nz_0$. Let $\calH^s_p(\rz_+;X)$ denote the space of all functions
$u\in L^p(\rz_+;X)$ such that $(t D_{t})^\ell u\in L^p(\rz_+;X)$ for all $\ell\le s$. In the same way we define $\calH^s_{p,\infty}(\rz_+;X)$ by replacing $L^p(\rz_+;X)$ with the Lorentz space $L^{p,\infty}(\rz_+;X)$.
\end{definition}

Both $\calH^s_{p}(\rz_+;X)$ and $\calH^s_{p,\infty}(\rz_+;X)$ are Banach spaces in an obvious way.

\begin{theorem}\label{thm:weak-02}
Let $1<p,q<\infty$, $t\in\rz$, $s\in\nz_0$, $d\le 0$, and $0\le\theta\le 1$. If $k\in S^{d}_{P,w}(\rz^{n-1}\times\Sigma)$ then
 $$\scrK:=\Big\{\spk{\mu}^{-\theta d+\frac1p}\op(k)(\mu)\mid \mu\in\Sigma\Big\}\subset
    \scrL(H^t_q(\rz^{n-1}),\calH^s_{p,\infty}(\rz_+;H^{t-(1-\theta)d}_q(\rz^{n-1}))$$
is $R$-bounded. If $p\ge 2$ then $\scrK$ is also an $R$-bounded subset of
 $$\scrL(H^t_q(\rz^{n-1}),\calH^s_{p}(\rz_+;H^{t-(1-\theta)d}_q(\rz^{n-1}))).$$
\end{theorem}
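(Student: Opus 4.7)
The strategy is to reduce to the model case and then leverage the stability of the weak Poisson class $S^{0}_{P,w}$ under the totally characteristic derivative $x_nD_{x_n}$, so that Theorem \ref{thm:weak-main01} can be applied coordinate by coordinate.

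First, I would peel off the tangential pseudodifferential factors. Write $\op(k)(\mu) = \op(k_0)(\mu)\,\spk{D',\mu}^{d}$ with
\[
k_0(\xi',\mu,x_n) := \spk{\xi',\mu}^{-d} k(\xi',\mu,x_n) \in S^{0}_{P,w}(\rz^{n-1}\times\Sigma),
\]
where the membership is immediate from $\wt{k_0}(\xi',\mu,t) = \spk{\xi',\mu}^{-d}\wt{k}(\xi',\mu,t)$. Exactly as in the proof of Lemma \ref{lem:r-boundedness-pseudo}, the symbol $\spk{\mu}^{-\theta d}\spk{\xi',\mu}^{d}\spk{\xi'}^{-(1-\theta)d}$ lies in $S^{0}(\rz^{n-1})$ uniformly in $\mu\in\Sigma$, so Corollary \ref{cor:Mikhlin} yields the $R$-boundedness of
$\{\spk{\mu}^{-\theta d}\spk{D',\mu}^{d}\mid\mu\in\Sigma\}$ in $\scrL(H^t_q(\rz^{n-1}),H^{t-(1-\theta)d}_q(\rz^{n-1}))$. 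After composition and a further conjugation with $\spk{D'}^{t-(1-\theta)d}$, it remains to handle the case $d=0$, $\theta=0$, $t=0$, with a new symbol $k\in S^{0}_{P,w}(\rz^{n-1}\times\Sigma)$; the target space then simplifies to $\calH^s_{p,\infty}(\rz_+;L^q(\rz^{n-1}))$.

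For the core step I would exploit the observation recorded at the start of the subsection: if $k(\xi',\mu,x_n)=\wt{k}(\xi',\mu,\spk{\xi',\mu}x_n)$ then
\[
(x_nD_{x_n})^\ell \op(k)(\mu) = \op(k_{(\ell)})(\mu), \qquad k_{(\ell)}(\xi',\mu,x_n) := \bigl((tD_t)^\ell\wt{k}\bigr)(\xi',\mu,\spk{\xi',\mu}x_n),
\]
because $x_n\partial_{x_n}=t\partial_t$ under the rescaling $t=\spk{\xi',\mu}x_n$. Since $(tD_t)^\ell$ maps $\scrS^{0}(\rz_+)$ continuously into itself, $k_{(\ell)}\in S^{0}_{P,w}(\rz^{n-1}\times\Sigma)$ with seminorms controlled by $\|k\|'_{0,(N+\ell)}$. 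Applying Theorem \ref{thm:weak-main01} to each $k_{(\ell)}$, $\ell=0,1,\ldots,s$, shows that
\[
\Bigl\{\spk{\mu}^{\frac1p}(x_nD_{x_n})^\ell\op(k)(\mu)\mid\mu\in\Sigma\Bigr\}
\]
is an $R$-bounded subset of $\scrL(L^q(\rz^{n-1}),L^{p,\infty}(\rz_+;L^q(\rz^{n-1})))$ with $R$-bound $\lesssim\|k\|'_{0,(n+1+s)}$. Passing to the intersection over $\ell\le s$ is exactly the definition of $\calH^s_{p,\infty}(\rz_+;L^q(\rz^{n-1}))$, which gives the first assertion. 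For $p\ge 2$ the same argument goes through verbatim, using the $L^p$-version of Theorem \ref{thm:weak-main01} in place of its $L^{p,\infty}$-version.

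The only non-routine point is the identity $(x_nD_{x_n})^\ell\op(k)(\mu)=\op(k_{(\ell)})(\mu)$ with $k_{(\ell)}\in S^{0}_{P,w}$; once this is in place (and it is essentially built into Definition \ref{def:weak-poisson-symbols} together with the invariance of $\scrS^{0}(\rz_+)$ under $tD_t$), everything else is a direct assembly of results already proved in the paper. The reason Theorem \ref{thm:weak-05} could only treat ordinary derivatives $D^\ell_{x_n}$ for strong Poisson symbols is precisely that $D_{x_n}$ does not preserve $\scrS^{0}$, whereas $x_nD_{x_n}$ does, which is what allows the present statement to hold for the strictly larger weak class.
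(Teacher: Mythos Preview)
Your proposal is correct and follows essentially the same route as the paper: reduce to $d=\theta=t=0$ via the factorization $\op(k)(\mu)=\op(k_0)(\mu)\spk{D',\mu}^d$ and tangential conjugation, then use that $(x_nD_{x_n})^\ell$ preserves $S^0_{P,w}$ so that Theorem~\ref{thm:weak-main01} applies for each $\ell\le s$, and finally pass to the intersection defining $\calH^s_{p,\infty}$. Your write-up is in fact slightly more careful than the paper's (you have the correct sign in $k_0=\spk{\xi',\mu}^{-d}k$, and you spell out why $k_{(\ell)}\in S^0_{P,w}$).
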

\begin{proof}
Since $\op(k)(\mu)=\op(k_0)(\mu)\spk{D',\mu}^d$ where
$k_0(\xi',\mu,x_n):=\spk{\xi',\mu}^dk(\xi',\mu,x_n)$ belongs to $S^{0}_{P,w}(\rz^{n-1}\times\Sigma)$, it suffices to treat the case $d=\theta=0$.  By conjugation with $\spk{D'}^t$ we may also assume $t=0$. Since
 $$\|u\|_{L^p(\Omega,\calH^s_{p,\infty}(\rz_+;L^q(\rz^{n-1})))}\cong
   \sum_{\ell=0}^s \|(tD_t)^\ell u\|_{L^p(\Omega,L^{p,\infty}(\rz_+;L^q(\rz^{n-1})))},$$
the $R$-boundedness of $\{\spk{\mu}^{\frac1p}\op(k)(\mu)\mid \mu\in\Sigma\}$
in $\scrL(L^q(\rz^{n-1}),\calH^s_{p,\infty}(\rz_+;L^q(\rz^{n-1}))$ follows from the $R$-boundedness of
the sets $\{\spk{\mu}^{\frac1p}(x_nD_{x_n})^\ell\op(k)(\mu)\mid \mu\in\Sigma\}$, $\ell\le s$,
in $\scrL(L^q(\rz^{n-1}),L^{p,\infty}(\rz_+;L^q(\rz^{n-1}))$. However, this is true in view of Theorem \ref{thm:weak-main01} and the discussion before Definition \ref{def:tot-char-01}. The reasoning in case $p\ge2$ is the same.
\end{proof}

\subsection{Edge degenerate Sobolev spaces}

For a multi-index $\alpha\in\nz_0^n$,  let us write
 $$(x_n D)^\alpha:=(x_n D_{x_1})^{\alpha_1} (x_n D_{x_2})^{\alpha_2} \cdots (x_n D_{x_n})^{\alpha_n}.$$

\begin{definition}\label{def:tot-char-02}
For $s\in\nz_0$ let us define
$\mathcal{W}^{s}_p(\rz^n_+)$ as the space of all functions $u\in L^{p}(\rz^n_+)$ such that
 $$(x_nD)^{\alpha}u\in L^{p}(\rz^n_+)\qquad\forall\;|\alpha|\le s.$$
Furthermore, with $\sigma\in\rz$, we introduce
\begin{align*}
 \mathcal{W}^{s,\sigma}_p(\rz^n_+)
    =\spk{D'}^{-\sigma}\mathcal{W}^{s}_p(\rz^n_+).
\end{align*}
Similarly we define $\mathcal{W}^{s,\sigma}_{p,\infty}(\rz^n_+)$ by substituting
$L^{p}(\rz^n_+)$ with $L^{p,\infty}(\rz_+;L^{p}(\rz^{n-1}))$.
\end{definition}

Given a Poisson operator $K_\mu$ with symbol-kernel $k\in S^d_{P,w}(\rz^{n-1}\times\Sigma)$ where
$k(\xi,\mu,x_n)=\wt k(\xi,\mu,\spk{\xi',\mu}x_n)$, we find that $x_n D_{x_j}K_\mu$, $1\le j\le n-1$,
is a Poisson operator with symbol-kernel
 $$k_j(\xi',\mu,x_n):=\frac{\xi_j'}{\spk{\xi',\mu}} (x_n \wt k)(\xi,\mu,\spk{\xi',\mu}x_n)$$
which has order $d$ again; together with the observation made before Definition \ref{def:tot-char-01}, it follows that $(x_n D)^\alpha K_\mu$ is a Poisson operator with symbol-kernel in $S^d_{P,w}(\rz^{n-1}\times\Sigma)$ for arbitrary $\alpha\in\nz_0^n$. Therefore, arguing as in the proof of Theorem \ref{thm:weak-02}, we obtain the following result$:$

\begin{theorem}\label{thm:weak-03}
Let $1<p<\infty$, $s\in\nz_0$, $t\in\rz$, $0\le\theta\le 1$, and
$k\in S^{d}_P(\rz^{n-1}\times\Sigma)$ with $d\le 0$. Then
 $$\scrK:=\big\{\spk{\mu}^{-\theta d+\frac1p}\op(k)(\mu)\mid \mu\in\Sigma\big\}\subset
    \scrL(H^t_p(\rz^{n-1}),\mathcal{W}^{s,t-(1-\theta)d}_{p,\infty}(\rz^n_+))$$
is $R$-bounded. If $p\ge 2$ then $\scrK$ is also an $R$-bounded subset of
 $$\scrL(H^t_p(\rz^{n-1}),\mathcal{W}^{s,t-(1-\theta)d}_{p}(\rz^n_+)).$$
\end{theorem}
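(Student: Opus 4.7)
The plan is to adapt the proof of Theorem \ref{thm:weak-02} to the full family of totally characteristic derivatives $(x_nD)^\alpha$, $|\alpha|\le s$, rather than only to powers of $x_n D_{x_n}$.

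The first step is the by now standard reduction to the case $d=0$, $\theta=0$, $t=0$. Writing $\op(k)(\mu) = \op(k_0)(\mu)\spk{D',\mu}^d$ with $k_0(\xi',\mu,x_n) := \spk{\xi',\mu}^{-d}k(\xi',\mu,x_n) \in S^0_P(\rz^{n-1}\times\Sigma)$, Lemma \ref{lem:r-boundedness-pseudo} makes the family $\{\spk{\mu}^{-\theta d}\spk{D',\mu}^d\mid \mu\in\Sigma\}$ $R$-bounded from $H^t_p(\rz^{n-1})$ into $H^{t-(1-\theta)d}_p(\rz^{n-1})$, so it is enough to treat $\op(k_0)(\mu)$ from $H^t_p(\rz^{n-1})$ into $\mathcal{W}^{s,t}_{p,\infty}(\rz^n_+)$; conjugation with $\spk{D'}^t$ then further reduces to $t=0$. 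Hence the remaining task is to show that $\{\spk{\mu}^{1/p}\op(k)(\mu)\mid \mu\in\Sigma\}$ is $R$-bounded in $\scrL(L^p(\rz^{n-1}),\mathcal{W}^s_{p,\infty}(\rz^n_+))$ for $k \in S^0_P(\rz^{n-1}\times\Sigma)$.

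Second, by Definition \ref{def:tot-char-02} one has the equivalence
\[\|u\|_{\mathcal{W}^s_{p,\infty}(\rz^n_+)} \cong \sum_{|\alpha|\le s}\|(x_nD)^\alpha u\|_{L^{p,\infty}(\rz_+;L^p(\rz^{n-1}))}.\]
The key structural fact, already indicated in the discussion preceding the theorem, is that $(x_nD)^\alpha\op(k)(\mu) = \op(k_\alpha)(\mu)$ for some $k_\alpha \in S^0_{P,w}(\rz^{n-1}\times\Sigma)$ whose symbol-kernel seminorms are controlled by those of $k$; this is proved by induction on $|\alpha|$, combining the two elementary identities for $x_nD_{x_n}$ and $x_nD_{x_j}$, $1\le j\le n-1$, given right before the theorem. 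Note the passage from $S^0_P$ to the wider class $S^0_{P,w}$: the factors $x_n$ appearing on the right are absorbed into the tilde-variable $t = \spk{\xi',\mu}x_n$ and no longer produce Schwartz decay in $t$.

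Third, for each multi-index $|\alpha|\le s$ the family $\{\spk{\mu}^{1/p}\op(k_\alpha)(\mu)\mid \mu\in\Sigma\}$ is $R$-bounded in $\scrL(L^p(\rz^{n-1}),L^{p,\infty}(\rz_+;L^p(\rz^{n-1})))$ by Theorem \ref{thm:weak-main01} applied with $q=p$, since $k_\alpha \in S^0_{P,w}$ with uniformly bounded seminorm $\|k_\alpha\|'_{0,(n+1)}$. Summing over $|\alpha|\le s$ and using the norm characterization above yields the claim for the weak space. For $p\ge 2$, the same argument with the $L^p$-version of Theorem \ref{thm:weak-main01} in place of the weak-$L^p$-version delivers the stronger conclusion. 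The only genuine point is the verification $(x_nD)^\alpha\op(k)(\mu) = \op(k_\alpha)(\mu)$ with $k_\alpha\in S^0_{P,w}$ and controlled seminorms; everything else is a direct application of the already available Theorem \ref{thm:weak-main01}.
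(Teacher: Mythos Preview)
Your proof is correct and follows essentially the same approach as the paper: reduce to $d=\theta=t=0$, observe that $(x_nD)^\alpha$ applied to the Poisson operator yields another Poisson operator of the same order (this is exactly the content of the discussion preceding the theorem), and then apply Theorem~\ref{thm:weak-main01} termwise. One small imprecision: your remark that the factors of $x_n$ ``no longer produce Schwartz decay in $t$'' and force a passage from $S^0_P$ to $S^0_{P,w}$ is not quite right---multiplication by $t$ preserves $\scrS(\rz_+)$, so in fact $k_\alpha\in S^0_P$ already---but this is harmless since $S^0_P\subset S^0_{P,w}$ and Theorem~\ref{thm:weak-main01} only needs the latter.
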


\subsection{Weighted spaces}

Given a toplogical space $X\subset\mathscr{D}'(\rz^n_+)$ on the half-space $\rz^n_+$, let us denote by $x_n^\gamma X$, $\gamma\in\rz$, the image of $X$ under the map $u\mapsto x_n^\gamma u$, equipped with the canonically induced topology.

Given a symbol-kernel $k(\xi,\mu,x_n)=\wt k(\xi,\mu,\spk{\xi',\mu}x_n)$ of order $d$, obviously
 $$k_\gamma(\xi,\mu,x_n):=x_n^{-\gamma} k(\xi,\mu,x_n)=
   \spk{\xi',\mu}^{\gamma}(x_n^{-\gamma}\wt k)(\xi,\mu,\spk{\xi',\mu}x_n).$$
belongs to $S^{d+\gamma}_{P,w}(\rz^{n-1}\times\Sigma)$ provided
$x_n^{-\gamma}\wt k$ belongs to $S^{d}(\rz^{n-1}\times\Sigma;\scrS^0(\rz_+))$; this is always the case when $\gamma\le0$ because, in this case, multiplication by $x_n^{-\gamma}$ is a continuous endomorphism of $\scrS^0(\rz_+)$. Moreover,
 $$\|\op(k)(\mu)v\|_{x_n^\gamma X}=\|\op(k_\gamma)(\mu)v\|_{X}.$$
Thus we immediately obtain the following versions of Theorems \ref{thm:weak-02} and \ref{thm:weak-03} in weighted spaces$:$

\begin{theorem}\label{thm:weak-02b}
Let $1<p,q<\infty$, $t\in\rz$, $s\in\nz_0$, $\gamma\le 0$, $0\le\theta\le 1$, and
$k\in S^{d}_{P,w}(\rz^{n-1}\times\Sigma)$ with $d+\gamma\le0$. Then
\begin{align*}
 \scrK&:=\Big\{\spk{\mu}^{-\theta(d+\gamma)+\frac1p}\op(k)(\mu)\mid \mu\in\Sigma\Big\}\\
 &\subset \scrL(H^t_q(\rz^{n-1}),x_n^\gamma\calH^s_{p,\infty}(\rz_+;H^{t-(1-\theta)(d+\gamma)}_q(\rz^{n-1}))
\end{align*}
is $R$-bounded. If $p\ge 2$ then $\scrK$ is also an $R$-bounded subset of
 $$\scrL(H^t_q(\rz^{n-1}),x_n^\gamma \calH^s_{p}(\rz_+;H^{t-(1-\theta)d}_q(\rz^{n-1})).$$
\end{theorem}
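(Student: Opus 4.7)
The strategy is spelled out in the paragraph preceding the statement: reduce the weighted theorem to the unweighted Theorem~\ref{thm:weak-02} by absorbing the weight $x_n^{-\gamma}$ into the symbol-kernel. The plan has three short steps.

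First, I would introduce $k_\gamma(\xi',\mu,x_n) := x_n^{-\gamma}k(\xi',\mu,x_n)$. Writing $k(\xi',\mu,x_n)=\wt k(\xi',\mu,\spk{\xi',\mu}x_n)$ and substituting $t=\spk{\xi',\mu}x_n$ yields
$$
k_\gamma(\xi',\mu,x_n) = \spk{\xi',\mu}^{\gamma}\,(t^{-\gamma}\wt k)(\xi',\mu,\spk{\xi',\mu}x_n).
$$
Since $-\gamma\ge0$, multiplication by $t^{-\gamma}$ is a continuous endomorphism of $\scrS^0(\rz_+)$: from $(tD_t)^m(t^{-\gamma}u) = t^{-\gamma} P_m(u,(tD_t)u,\dots,(tD_t)^m u)$ with $P_m$ a linear polynomial in its arguments, together with $t^{-\gamma}\spk{t}^{-\ell} \le \spk{t}^{-\ell+|\gamma|}$, the defining seminorms are dominated. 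The prefactor $\spk{\xi',\mu}^{\gamma}$ shifts the Fréchet-valued symbol order from $d$ to $d+\gamma$, so $k_\gamma\in S^{d+\gamma}_{P,w}(\rz^{n-1}\times\Sigma)$; by hypothesis $d+\gamma\le 0$, so Theorem~\ref{thm:weak-02} applies to $k_\gamma$.

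Second, I would record the pointwise identity
$$
\op(k_\gamma)(\mu)v(x',x_n) = x_n^{-\gamma}\op(k)(\mu)v(x',x_n),
$$
which follows directly from the definition \eqref{eq:poisson-operator}. Since the norm on $x_n^\gamma X$ is by definition $\|u\|_{x_n^\gamma X} = \|x_n^{-\gamma}u\|_{X}$, this identity makes multiplication by $x_n^\gamma$ an isometric isomorphism $X\to x_n^\gamma X$, under which $R$-bounds transfer verbatim.

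Third, I would apply Theorem~\ref{thm:weak-02} to $k_\gamma$, giving the $R$-boundedness of
$$
\big\{\spk\mu^{-\theta(d+\gamma)+\tfrac1p}\op(k_\gamma)(\mu)\mid\mu\in\Sigma\big\}
$$
in $\scrL(H^t_q(\rz^{n-1}),\calH^s_{p,\infty}(\rz_+;H^{t-(1-\theta)(d+\gamma)}_q(\rz^{n-1})))$, and analogously with $\calH^s_{p}$ in place of $\calH^s_{p,\infty}$ when $p\ge 2$. Post-composing with the isometry $x_n^\gamma$ turns this into the claim. The only non-clerical step is the stability of $\scrS^0(\rz_+)$ under multiplication by $t^{-\gamma}$, but since $-\gamma\ge 0$ the weight is smooth on $\rz_+=(0,\infty)$ and grows polynomially, so this is routine; no new analytical ingredients beyond Theorem~\ref{thm:weak-02} are required.
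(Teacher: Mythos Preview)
Your proposal is correct and follows exactly the paper's approach: the paragraph immediately preceding the theorem already carries out precisely your three steps (define $k_\gamma=x_n^{-\gamma}k$, observe $k_\gamma\in S^{d+\gamma}_{P,w}$ since $\gamma\le0$ makes $t\mapsto t^{-\gamma}$ a continuous endomorphism of $\scrS^0(\rz_+)$, and use $\|\op(k)(\mu)v\|_{x_n^\gamma X}=\|\op(k_\gamma)(\mu)v\|_X$), after which the theorem is stated as an immediate consequence of Theorem~\ref{thm:weak-02}. Your write-up merely supplies a little more detail on the $\scrS^0$-stability step than the paper does.
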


\begin{theorem}\label{thm:weak-03b}
Let $1<p<\infty$, $s\in\nz_0$, $t\in\rz$, $\gamma\le0$, $0\le\theta\le 1$, and
$k\in S^{d}_P(\rz^{n-1}\times\Sigma)$ with $d+\gamma\le 0$. Then
 $$\scrK:=\Big\{\spk{\mu}^{-\theta (d+\gamma)+\frac1p}\op(k)(\mu)\mid \mu\in\Sigma\Big\}\subset
    \scrL(H^t_p(\rz^{n-1}),x_n^\gamma\mathcal{W}^{s,t-(1-\theta)(d+\gamma)}_{p,\infty}(\rz^n_+))$$
is $R$-bounded. If $p\ge 2$ then $\scrK$ is also an $R$-bounded subset of
 $$\scrL(H^t_p(\rz^{n-1}),x_n^\gamma\mathcal{W}^{s,t-(1-\theta)(d+\gamma)}_{p}(\rz^n_+)).$$
\end{theorem}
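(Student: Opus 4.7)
The plan is to reduce the statement to Theorem~\ref{thm:weak-03} (in its natural version for weak Poisson symbol-kernels) by conjugating with the multiplication operator $M_\gamma\colon u\mapsto x_n^\gamma u$. This is exactly the strategy the paper sketches in the discussion immediately preceding the theorem, and I would follow it step by step.

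First, I would rewrite $k_\gamma(\xi',\mu,x_n):=x_n^{-\gamma}k(\xi',\mu,x_n)$ using the representative $\wt k$ of $k$ from Definition~\ref{def:weak-poisson-symbols}, obtaining
$$k_\gamma(\xi',\mu,x_n)=\spk{\xi',\mu}^{\gamma}(x_n^{-\gamma}\wt{k})(\xi',\mu,\spk{\xi',\mu}x_n).$$
Since $\gamma\le 0$, multiplication by $x_n^{-\gamma}$ is a continuous endomorphism of $\scrS^0(\rz_+)$, so $x_n^{-\gamma}\wt k\in S^{d}(\rz^{n-1}\times\Sigma;\scrS^0(\rz_+))$ and consequently $k_\gamma\in S^{d+\gamma}_{P,w}(\rz^{n-1}\times\Sigma)$, of order $d+\gamma\le 0$. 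By construction $\op(k)(\mu)=M_\gamma\circ\op(k_\gamma)(\mu)$, and, directly from the definition of the weighted space, $M_\gamma$ is an isometric isomorphism from $\mathcal{W}^{s,\sigma}_{p,\infty}(\rz^n_+)$ onto $x_n^\gamma\mathcal{W}^{s,\sigma}_{p,\infty}(\rz^n_+)$ for every $\sigma\in\rz$ (and analogously for $\mathcal{W}^{s,\sigma}_{p}$).

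Next I would apply Theorem~\ref{thm:weak-03} with $k$ replaced by $k_\gamma$ and $d$ replaced by $d+\gamma$. This yields the $R$-boundedness of
$$\big\{\spk{\mu}^{-\theta(d+\gamma)+\tfrac1p}\op(k_\gamma)(\mu)\mid\mu\in\Sigma\big\}$$
in $\scrL(H^t_p(\rz^{n-1}),\mathcal{W}^{s,t-(1-\theta)(d+\gamma)}_{p,\infty}(\rz^n_+))$, and (when $p\ge 2$) also in the variant with $\mathcal{W}^{s,\sigma}_{p}$ in place of $\mathcal{W}^{s,\sigma}_{p,\infty}$. Composition with the (isometric, hence trivially $R$-bounded) singleton $\{M_\gamma\}$ preserves the $R$-bound and transfers the conclusion into the weighted target space, giving exactly $\scrK\subset\scrL(H^t_p(\rz^{n-1}),x_n^\gamma\mathcal{W}^{s,t-(1-\theta)(d+\gamma)}_{p,\infty}(\rz^n_+))$, and the same with $\mathcal{W}^{s,\sigma}_p$ when $p\ge 2$.

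The only technical point that requires care is that $k_\gamma$ lies in $S^{d+\gamma}_{P,w}$ rather than in the strong class $S^{d+\gamma}_{P}$ appearing in the statement of Theorem~\ref{thm:weak-03}. This is not a real obstacle, however: the proof of Theorem~\ref{thm:weak-03} only invokes Theorem~\ref{thm:weak-02} (already formulated for weak Poisson symbol-kernels) together with the observation, made at the start of Section~\ref{sec:05}, that the operators $(x_n D)^\alpha$ map $S^d_{P,w}$ into itself of the same order. Hence the extension of Theorem~\ref{thm:weak-03} to $k\in S^d_{P,w}$ is immediate from inspection of its proof, and once this is noted, the whole argument reduces to the chain Theorem~\ref{thm:weak-main01}~$\to$~Theorem~\ref{thm:weak-02}~$\to$~Theorem~\ref{thm:weak-03}~$\to$~Theorem~\ref{thm:weak-03b} via the single conjugation $M_\gamma$.
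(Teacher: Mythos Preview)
Your proposal is correct and follows exactly the approach the paper uses: the discussion in Section~5.3 immediately preceding Theorem~\ref{thm:weak-03b} already records that $k_\gamma\in S^{d+\gamma}_{P,w}$ and that $\|\op(k)(\mu)v\|_{x_n^\gamma X}=\|\op(k_\gamma)(\mu)v\|_{X}$, after which the theorem is simply declared an immediate consequence of Theorem~\ref{thm:weak-03}. Your explicit remark that Theorem~\ref{thm:weak-03} must be read for weak symbol-kernels (which is indeed how its own preamble is phrased, despite the ``$S^d_P$'' in the statement) is a useful clarification that the paper leaves implicit.
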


\section{Application to pde with dynamical boundary conditions}\label{sec:06}

In this section, we use the above results to show maximal $L^q$-regularity for operators related to evolution equations with dynamic boundary conditions.

\subsection{A first example}
We start with the most simple prototype of such equations and consider
\begin{equation}
    \label{5-1}
    \begin{alignedat}{4}
        \partial_t u(t,x) - (\Delta-1) u(t,x) & = f(t,x) &\quad& \text{ in } \rz_+\times \R^n_+,\\
        \partial_t u(t,x)+\partial_\nu u(t,x) & = g(t,x) &&\text{ on } \rz_+\times \R^{n-1},\\
        u(0,x) & =   0 &&\text{ in } \R^n_+.
    \end{alignedat}
\end{equation}
Here, $\partial_\nu = -\frac{\partial}{\partial x_n}$ is the derivative in direction of the outer normal vector. We want to solve \eqref{5-1} for $f\in L^p(\rz_+\times \R^n_+)$ and $g\in L^p(\rz_+\times\R^{n-1})$ with $1<p<+\infty$.

From the first equation, one would expect the solution $u$ to belong to the classical parabolic solution space $L^p(\rz_+; H_p^2(\R^n_+))\cap H_p^1(\rz_+; L^p(\R^n_+))$. In this case, $\partial_\nu u|_{\rz_+\times\R^{n-1}}$ belongs to some (anisotropic) Besov space in $t$ and $x$ of positive order. So the regularity of the boundary data $g$ is not large enough to apply a classical maximal regularity approach as, e.g., in \cite{Denk-Pruess-Zacher08}.

To overcome this difficulty we decouple $u$ and $v:=\gamma_0 u$ and consider \eqref{5-1} as a Cauchy problem of the form $(\partial_t - A)\binom uv = \binom fg$, where $A$ is the unbounded operator in $X:= L^p(\R^n_+)\times L^p(\R^{n-1})$ defined by
\begin{equation}\label{5-2}
\begin{aligned}
   A\binom uv & := \begin{pmatrix}
   \Delta-1 & 0 \\ -\gamma_1 & 0
\end{pmatrix}\binom uv,\\
D(A) & := \left\{ \binom uv\in X: A\binom uv \in X,\, \gamma_0 u = v\right\},
\end{aligned}
\end{equation}
where $\gamma_1:=\gamma_0\partial_\nu$ is the Neumann boundary condition.
Note that, although $u$ is only required to belong to $L^p(\rz^n_+)$, it is meaningful to define the traces $\gamma_0u$ and $\gamma_1 u$: In fact, the definition of $D(A)$ entails that $\Delta u\in L^p(\rz^n_+)$. If we define the spaces
  $$H_{p,\Delta}^s(\R^n_+):= \{ w\in H_p^s(\R^n_+): \Delta w\in L^p(\rz^n_+)\}, \qquad s\in\rz,$$
it is a classical result that $\gamma_0$ and $\gamma_1$ induce, for  every $s\in\rz$, continuous maps from $H_{p,\Delta}^s(\R^n_+)$ to $B^{s-1/p}_{pp}(\rz^{n-1})$ and $B^{s-1-1/p}_{pp}(\rz^{n-1})$, respectively; see for example \cite[Theorem~7]{Seeley66}, \cite[Chapter~2, Section~6.5]{Lions-Magenes72}, and \cite[Section~6.1]{Roitberg96}.

Recall that an operator $A$ is called $R$-sectorial of angle $\theta$ if the resolvent $(A-\lambda)^{-1}$ exists for all
\[ \lambda\in \Sigma_\theta :=
\{\lambda\in\C\setminus\{0\}: |\arg(\lambda)|<\theta\}\]
and the set $\{\lambda (A-\lambda)^{-1}: \lambda\in \Sigma_{\theta'}\}$ is $R$-bounded for all $\theta'<\theta$. It is well known that $R$-sectoriality with angle $\theta>\frac\pi2$ implies maximal $L^q$-regularity.

\begin{theorem}\label{5.1}
The operator $A$ defined in \eqref{5-2} is $R$-sectorial of angle $\pi$. In particular, it has maximal $L^q$-regularity for every $1<q<+\infty$, i.e.,
for any data $\binom fg\in L^q(\rz_+;X)$, there exists a unique solution
\[ \binom uv = \binom u{\gamma_0 u}\in H_q^1(\rz_+; X) \cap L^q(\rz_+; D(A))\]
of \eqref{5-1} depending continuously on $f$ and $g$.
\end{theorem}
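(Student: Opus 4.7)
The plan is to write the resolvent $(A - \lambda)^{-1}$ explicitly using the Poisson operator from Example~\ref{subsec:example} and then verify the desired $R$-boundedness block by block. Fix $\eps>0$ and $\lambda \in \Sigma_{\pi-\eps}$, and set $\mu := \sqrt{1+\lambda}$ with $\Re\mu > 0$; as $\lambda$ traverses $\Sigma_{\pi-\eps}$, $\mu$ runs through a proper subsector of the right half-plane. The resolvent equation $(A-\lambda)\binom{u}{v} = \binom{f}{g}$ is equivalent to $(\mu^2-\Delta)u = -f$ in $\R^n_+$, $\gamma_0 u = v$, and $\gamma_1 u + \lambda v = -g$. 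The first two conditions are solved by $u = -R_\mu f + K_\mu v$, where $R_\mu = (\mu^2-\Delta_{\mathrm{Dir}})^{-1}$ and $K_\mu$ is the Poisson operator with symbol-kernel $e^{-\tau(\xi',\mu) x_n}$, $\tau(\xi',\mu) = (1+|\xi'|^2+\mu^2)^{1/2}$ (principal branch). Substituting into the third condition yields the boundary equation $M_\mu v = \gamma_1 R_\mu f - g$, where $M_\mu := \Lambda_\mu + \lambda$ and $\Lambda_\mu := \gamma_1 K_\mu = \tau(D',\mu)$, giving the block form
\[
(A-\lambda)^{-1}
= \begin{pmatrix}
-R_\mu + K_\mu M_\mu^{-1}\gamma_1 R_\mu & -K_\mu M_\mu^{-1}\\
M_\mu^{-1}\gamma_1 R_\mu & -M_\mu^{-1}
\end{pmatrix}.
\]

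Next I would analyse the scalar symbol $m(\xi',\mu) := \tau(\xi',\mu) + \mu^2 - 1$. A direct computation shows that $m = 0$ forces $\tau = -\lambda$ and $|\xi'|^2 = (\lambda-2)(\lambda+1)$, which together with $\Re\tau > 0$ is possible only when $\lambda \in (-\infty,-1]$. Since $\Sigma_{\pi-\eps}$ is uniformly separated from this half-line, a quantitative version of this uniqueness yields $|m(\xi',\mu)| \gtrsim \spk{\xi'} + \spk{\mu}^2$ uniformly in $\lambda \in \Sigma_{\pi-\eps}$, so $M_\mu$ is invertible on $L^p(\R^{n-1})$. Combined with $\partial_{\xi'_j} m = \xi'_j/\tau$ and higher derivative identities, this lower bound makes $\lambda/m(\cdot,\mu)$ a uniformly bounded family of Mikhlin multipliers in $\xi'$, so $\{\lambda M_\mu^{-1}\}$ is $R$-bounded on $L^p(\R^{n-1})$ by Corollary~\ref{cor:Mikhlin}. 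The family $\{\lambda R_\mu\}$ is $R$-bounded on $L^p(\R^n_+)$ by the classical $R$-sectoriality of the half-space Dirichlet Laplacian. For the off-diagonal blocks, the $R$-boundedness of $\{K_\mu : L^p(\R^{n-1}) \to L^p(\R^n_+)\}$ follows from Theorem~\ref{thm:weak-main01} applied with $q=p$, absorbing the weight $\spk{\mu}^{1/p}$ via the Kahane contraction principle; for $1<p<2$ one instead argues by duality using $K$-convexity of $L^p$ and the adjoint identity $K_\mu^* = \pm\,\gamma_1 R_{\bar\mu}$ obtained from Green's formula, which simultaneously gives $R$-boundedness of $\gamma_1 R_\mu : L^p(\R^n_+) \to L^p(\R^{n-1})$. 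Since sums and compositions preserve $R$-boundedness, assembling the blocks yields $R$-boundedness of $\{\lambda(A-\lambda)^{-1}\}$ on $X$; letting $\eps\downarrow 0$ gives $R$-sectoriality of angle $\pi$, and maximal $L^q$-regularity then follows from Weis's equivalence in UMD spaces.

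The main obstacle is the symbolic analysis of $m = \tau + \lambda$: since $\tau$ is of order $1$ in $(\xi',\mu)$ jointly while $\lambda \sim \mu^2$ is of order $2$ in $\mu$, the symbol $m$ is genuinely anisotropic and does not lie in any classical symbol class of fixed order. The uniform lower bound on $|m|$ must therefore be established by a case split into the regimes $|\xi'|^2 \gtrsim |\mu|^2$ (where $\tau$ dominates and prevents cancellation) and $|\xi'|^2 \lesssim |\mu|^2$ (where the shift $\mu^2-1$ dominates), using crucially that $\Sigma_{\pi-\eps}$ remains uniformly away from $(-\infty,-1]$. A secondary delicate point is the transfer of the Poisson $R$-boundedness to the range $1<p<2$, for which Theorem~\ref{thm:weak-main01} only delivers weak-$L^p$ bounds; the duality argument above reduces that case to $p'>2$, where the strong-$L^p$ estimate is directly available.
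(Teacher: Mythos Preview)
Your approach is correct and runs parallel to the paper's: compute the resolvent explicitly, verify a uniform Mikhlin condition for the boundary symbol $\lambda/m$, and invoke the Poisson $R$-boundedness results for $K_\mu$. The paper organises the bookkeeping slightly differently---it first subtracts the Dirichlet solution $R_1(\mu)f$ to reduce to $f=0$, and treats $\gamma_1 R_1(\mu)$ simply as the fixed trace $\gamma_1:H^2_p\to L^p$ composed with the $R$-bounded family $\{R_1(\mu)\}\subset\scrL(L^p,H^2_p)$---but this is equivalent to your block-matrix decomposition.

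The one substantive difference is the range $1<p<2$ for $K_\mu$. The paper simply applies Theorem~\ref{rem:weak-version-01} with $\eps=1/p$, obtaining $R$-boundedness of $\{K_\mu\}$ in $\scrL(L^p(\rz^{n-1}),L^p(\rz^n_+))$ directly (no weight in $\mu$ is needed here anyway). You instead dualise via $K_\mu^*=-\gamma_1 R_\mu$ and $K$-convexity. This also works, but note that to close the argument non-circularly you must obtain the $R$-boundedness of $\gamma_1 R_\mu$ on $L^{p'}$ independently---and the clean way to do that is precisely the paper's $\scrL(L^{p'},H^2_{p'})$-bound for the resolvent, not the adjoint identity again. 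So your duality route buys avoidance of the $\eps$-loss theorem at the price of Green's formula and the $K$-convexity transfer, while the paper's route stays entirely within the tools developed in Sections~\ref{sec:03}--\ref{sec:04}.

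A minor slip: with your normalisation $\mu^2=1+\lambda$ the symbol-kernel is $e^{-(|\xi'|^2+\mu^2)^{1/2}x_n}$ (no extra $+1$), which changes the algebraic formula for the zero set of $m$; this does not affect the argument, since either way the zeros lie on a closed negative half-line that $\Sigma_{\pi-\eps}$ avoids.
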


\begin{proof}
    We fix $\theta'<\pi$ and consider the resolvent problem
\begin{equation}
    \label{5-3}
    \begin{alignedat}{4}
        (\mu^2+1) u -\Delta u & = f &\quad&\text{ in }\R^n_+,\\
        \mu^2 v + \partial_\nu u & = g&\quad&\text{ on }\R^{n-1},\\
        u & = v &\quad&\text{ on }\R^{n-1}.
    \end{alignedat}
\end{equation}
Here, we have set $\lambda=\mu^2$ with $\mu \in \Sigma:=\Sigma_{\theta'/2}$.
We are looking for the solution $\binom uv = R(\mu)\binom fg$. In a first step, we make the reduction to $f=0$ by writing
\[ \binom uv = \binom{R_1(\mu)f}0 + R(\mu)\binom 0 {g-\gamma_1 R_1(\mu)f},\]
where $u_1 = R_1(\mu)f $ is the unique solution of the first line in \eqref{5-3} with Dirichlet boundary conditions $\gamma_0 u_1=0$. As it is well known that the Dirichlet Laplacian is $R$-sectorial with angle $\pi$, we obtain the $R$-boundedness of
\[ \{\mu^2 R_1(\mu)\mid \mu\in \Sigma\}\subset \scrL (L^p(\R^n_+)).\]
Moreover, $\{ R_1(\mu)\mid \mu\in \Sigma\}$ is an
$R$-bounded subset of $\scrL(L^p(\R^n_+), H_p^2(\R^n_+))$ and $\gamma_1\in \scrL (H_p^2(\R^n_+), L^p(\R^{n-1}))$ is bounded (and independent of $\mu$). Therefore, if we define
 $$\wt R(\mu)g=\begin{pmatrix} \wt R_1(\mu)g\\ \wt R_2(\mu)g\end{pmatrix}
    :=R(\mu)\binom 0 g,\qquad g\in L^p(\R^{n-1}),$$
it remains to show that
\[ \left\{\mu^2\wt R(\mu) \mid \mu\in \Sigma\right\} \subset \scrL (L^p(\R^{n-1}), X)\]
is $R$-bounded. So, we have to solve \eqref{5-3} with $f=0$.
To find an explicit formula for the solution operator,
 we formally  apply  partial Fourier transform in $x'$ and obtain that $u$ has to satisfy
\[ \gamma_1 u = a_\mu(D')\gamma_0 u\]
with symbol $a_\mu(\xi')=\sqrt{1+|\xi'|^2+\mu^2}$. Note that the operator $a_\mu(D')$ is a parameter-dependent version of the Dirichlet-to-Neumann operator.

Inserting this and $\gamma_0 u = v$ into the second line of \eqref{5-3}, we obtain
\begin{equation}\label{5-3a}
 \mu^2\wt R_2(\mu)g=\mu^2 v = b_\mu(D')g,\qquad
 b_\mu(\xi')=\frac{\mu^2}{\mu^2+ \sqrt{1+|\xi'|^2+\mu^2}}.
\end{equation}
It is easily seen that $b_\mu$ satisfies the Mikhlin condition uniformly in $\mu\in\Sigma$, and therefore
\[ \{ b_\mu(D')\mid \mu\in\Sigma\}\subset \scrL(L^p(\R^{n-1}))\] is $R$-bounded. Finally, the first component $u=\wt R_1(\mu)g$ of the solution is given as the unique solution of
\begin{align*}
    (\mu^2+1) u -\Delta u & = 0  \quad \text{ in }\R^n_+,\\
        \gamma_0 u & = v  \quad \text{ on }\R^{n-1}.
\end{align*}
Therefore, we have $u=K_\mu v=K_\mu b_\mu(D')g$ where $K_\mu=\op(k)(\mu)$ is a Poisson operator with symbol $k\in S^{0}_{P}(\rz^{n-1}\times\Sigma)$. Applying Theorem~\ref{thm:weak-main01} in the case $p\ge 2$ and Remark~\ref{rem:weak-version-01} in the case $p<2$, we see that in all cases
\[ \{ \spk{\mu}^{1/p-\eps} \op(k)(\mu)\,|\, \mu\in\Sigma\}\subset
\scrL(L^p(\R^{n-1}), L^p(\R^n_+))\]
is $R$-bounded for any $\eps>0$. In our situation, it is sufficient
to choose $\eps=1/p$, which implies the $R$-boundedness of
$\mu\mapsto\mu^2\wt R_1(\mu)= K_\mu b_\mu(D')$. Altogether, we have seen that
$\{ \mu^2 R(\mu)\mid\mu\in\Sigma\}\subset\scrL(X)$
is $R$-bounded. For smooth right-hand sides $f$ and $g$, the function
$R(\mu)(f,g)^\top$ is, by construction, a classical solution of \eqref{5-3}. By density, it is also a solution  for
general $(f,g)^\top\in X$, where now $\partial_\nu$ has to be understood in the generalized
sense  explained above. Therefore, $R(\mu)$ is in fact  the resolvent of $A$ in $X$, which finishes the proof.
\end{proof}

Note that \eqref{5-3a} implies $v\in H^1_p(\rz^{n-1})$, since $b_\mu(D')$ is, for fixed $\mu$, of order $-1$. From the continuity of the Poisson operator, it follows that, for $p\ge 2$, we have $D(A)\subset H^{1+1/p}_p(\rz^n_+)\times H^1_p(\rz^{n-1})$.

\subsection{Cahn--Hilliard equation with dynamic boundary conditions}\label{Sec5.2}

As a second application, we consider a simplified model of a linearized Cahn--Hilliard equation
with dynamics on the boundary, which was discussed in detail in \cite{Pruess-Racke-Zheng06}.
This fourth-order system  has the form
\begin{equation}\label{5-4}
\begin{alignedat}{4}
  (\partial_t+\Delta^2) u & = f && \text{ in } (0,\infty)\times \R^n_+,\\
  \partial_t u + \partial_\nu u -\Delta' u & = g && \text{ on } (0,\infty)\times\R^{n-1},\\
  \partial_\nu \Delta u & = 0 && \text{ on }(0,\infty)\times\R^{n-1}
\end{alignedat}
\end{equation}
(plus initial condition). Again we decouple $u$ and $v:= \gamma_0 u$ and write \eqref{5-4} as a
 Cauchy problem in the  product space $X:= L^p(\R^n_+)\times L^p(\R^{n-1})$ for the operator $A$
 which now is defined by
\begin{equation}\label{5-5}
\begin{alignedat}{2}
     A  \binom uv & := \begin{pmatrix}
-\Delta^2 & 0 \\
-\partial_\nu & \Delta'
\end{pmatrix} \binom uv , \\
  D(A) & := \left\{ \binom uv\in X: \; A \binom uv\in X,\; \gamma_1 \Delta u  =0,\;
\gamma_0 u =v \right\}.
\end{alignedat}
\end{equation}
Similarly as in \eqref{5-2}, the involved Neumann trace $\gamma_1 (\Delta u)$ in the domain of $A$ is well-defined.
This operator has been analyzed in \cite[Section 5]{Denk-Ploss-Rau-Seiler23}, where it was shown that it is sectorial. With the results of the present paper, we can improve this to $R$-sectoriality and
thus get maximal $L^q$-regularity.

\begin{theorem}
For every $\lambda_0>0$, the operator $A-\lambda_0$ defined in \eqref{5-5} is $R$-sectorial of angle $\pi$ and therefore has maximal $L^q$-regularity for every $q\in (1,\infty)$.
\end{theorem}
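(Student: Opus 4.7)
The plan is to mimic the proof of Theorem \ref{5.1}, replacing the second-order bulk operator by the biharmonic one. Fix $\theta'<\pi$, put $\Sigma:=\Sigma_{\theta'/4}$, and parametrise $\lambda=\mu^4$ with $\mu\in\Sigma$, so that $\lambda\in\Sigma_{\theta'}$. The resolvent equation for $A-\lambda_0$ then reads
\[
  (\mu^4+\lambda_0+\Delta^2)u=f,\quad \gamma_1\Delta u=0,\quad \gamma_0 u=v,\quad (\mu^4+\lambda_0-\Delta')v+\gamma_1 u=g.
\]
As in Theorem \ref{5.1} I would first reduce to $f=0$ using the resolvent $R_1(\mu)$ of $\Delta^2$ equipped with Navier-type boundary conditions $\gamma_0 w=0$, $\gamma_1\Delta w=0$, whose $R$-sectoriality of angle $\pi$ is classical; combining the $R$-bound of $\{\spk{\mu}^2 R_1(\mu):\mu\in\Sigma\}\subset\scrL(L^p(\rz^n_+),H^2_p(\rz^n_+))$ with the trace theorem $\gamma_1:H^2_p(\rz^n_+)\to L^p(\rz^{n-1})$ shows that $\{\spk{\mu}^2\gamma_1 R_1(\mu)\}$ is $R$-bounded from $L^p(\rz^n_+)$ to $L^p(\rz^{n-1})$.

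For $f=0$ the bulk part is solved by partial Fourier analysis in $x'$: the roots of $(\rho^2-|\xi'|^2)^2=-(\mu^4+\lambda_0)$ with positive real part are
\[
 \sigma_\pm(\xi',\mu)=\sqrt{|\xi'|^2\pm i\sqrt{\mu^4+\lambda_0}},
\]
and the bounded solution with $\gamma_0 u=v$ and $\gamma_1\Delta u=0$ is $u=K_\mu v$ with symbol-kernel
\[
  k(\xi',\mu;x_n)=\frac{\sigma_-\,e^{-\sigma_+ x_n}+\sigma_+\,e^{-\sigma_- x_n}}{\sigma_++\sigma_-}.
\]
The main technical step is to verify that $k\in S^0_{P}(\rz^{n-1}\times\Sigma)$, which reduces to showing that $\sigma_\pm/\spk{\xi',\mu}$ and $\spk{\xi',\mu}/\sigma_\pm$ are zero-order parameter-dependent symbols and that $\Re(\sigma_\pm/\spk{\xi',\mu})$ is bounded below by a positive constant uniformly in $(\xi',\mu)$. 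This requires a careful branch-cut analysis of $\sigma_\pm$, which is made uniform even near $\mu=0$ thanks to the shift $\lambda_0>0$.

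Substituting $u=K_\mu v$ into the last equation yields the boundary equation $(\mu^4+\lambda_0+M_\mu(D')-\Delta')v=g-\gamma_1 R_1(\mu)f$, where $M_\mu=\gamma_1 K_\mu$ has symbol $m_\mu(\xi')=2\sigma_+\sigma_-/(\sigma_++\sigma_-)$, a parameter-elliptic symbol of order one with $|m_\mu|\sim\spk{\xi',\mu}$. Consequently the inverse multiplier $s_\mu(\xi'):=(\mu^4+\lambda_0+m_\mu(\xi')+|\xi'|^2)^{-1}$ satisfies, uniformly in $\mu\in\Sigma$,
\[
  |\partial_{\xi'}^{\alpha'}\bigl(\spk{\mu}^{j}s_\mu(\xi')\bigr)|\lesssim \spk{\xi'}^{-|\alpha'|}\qquad(j=2,4),
\]
so Corollary \ref{cor:Mikhlin} gives the $R$-boundedness of $\{\spk{\mu}^4 S_\mu(D')\}$ and $\{\spk{\mu}^2 S_\mu(D')\}$ on $L^p(\rz^{n-1})$.

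To assemble, I would write $\spk{\mu}^4 v=\spk{\mu}^4 S_\mu(D')g-\bigl(\spk{\mu}^2 S_\mu(D')\bigr)\bigl(\spk{\mu}^2\gamma_1 R_1(\mu)\bigr)f$; each factor is $R$-bounded, hence so is their composition, yielding the bound for the $v$-component. For the $u$-component, $\spk{\mu}^4 u=K_\mu\cdot\spk{\mu}^4 v$, and Theorem \ref{thm:weak-main01} (if $p\ge2$) together with Theorem \ref{rem:weak-version-01} applied with $\epsilon=1/p$ (if $1<p<2$) show that $\{K_\mu:\mu\in\Sigma\}\subset\scrL(L^p(\rz^{n-1}),L^p(\rz^n_+))$ is $R$-bounded; composition delivers the required bound for $u$. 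Letting $\theta'\uparrow\pi$ yields the $R$-sectoriality of $A-\lambda_0$ of angle $\pi$, and hence maximal $L^q$-regularity for all $q\in(1,\infty)$. The hardest step will be the verification $k\in S^0_P$ together with the ellipticity estimate on $m_\mu$, since the rest of the argument is structurally identical to the template of Theorem \ref{5.1}.
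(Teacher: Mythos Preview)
Your proposal is correct and follows essentially the same architecture as the paper's proof: reduce to $f=0$ via the resolvent of the bulk operator with the two homogeneous boundary conditions, solve the resulting ODE in $x_n$ to obtain the explicit Poisson symbol-kernel and the Dirichlet-to-Neumann symbol, establish $R$-boundedness of the boundary multiplier giving $v$, and then conclude for $u$ via Theorem~\ref{thm:weak-main01} (for $p\ge2$) respectively Theorem~\ref{rem:weak-version-01} with $\eps=1/p$ (for $1<p<2$). The one substantive difference is in how the $R$-boundedness of the boundary multiplier is obtained: the paper replaces $|\xi'|$ by a complex variable $z$, cites \cite[Lemma~5.5]{Denk-Ploss-Rau-Seiler23} for the holomorphy and boundedness of the resulting function $b_\mu(z)$ on a product of sectors, and then invokes the $R$-bounded $H^\infty$-calculus of $-\Delta'$; you instead verify Mikhlin symbol estimates for $s_\mu(\xi')$ directly and apply Corollary~\ref{cor:Mikhlin}. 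Both routes work; the paper's has the advantage of recycling the ellipticity analysis already carried out in \cite{Denk-Ploss-Rau-Seiler23}, while yours is self-contained but leaves the derivative bounds and the non-vanishing of the denominator $\mu^4+\lambda_0+m_\mu(\xi')+|\xi'|^2$ as the main computations still to be written out.
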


\begin{proof}
The proof follows the same lines as the proof of Theorem~\ref{5.1} but in a more complicated
situation. It is shown in \cite[proof of Theorem~5.6]{Denk-Ploss-Rau-Seiler23} that instead of
\eqref{5-3a} the function $v$ is now determined by $\mu^2v=b_\mu(D')g$ with
\[ b_\mu(\xi')=\frac{\mu^2(\tau_1(\xi',\mu)+\tau_2(\xi',\mu))}{(\mu^2+|\xi'|^2)(\tau_1(\xi',\mu)+\tau_2(\xi',\mu))+2 \tau_1(\xi',\mu) \tau_2(\xi',\mu)},\]
where $\tau_{1,2}(\xi',\mu) := \sqrt{|\xi'|^2\pm i\mu}$. Moreover, from \cite[Lemma 5.5]{Denk-Ploss-Rau-Seiler23}
we know that, after replacing $|\xi'|$ by $z$, the resulting function $b_\mu(z)$ is holomorphic and bounded
for $(z,\mu)\in \Sigma_\epsilon\times\Sigma_\theta$ for sufficiently small $\epsilon>0$ and $\theta\in (0,\frac\pi 2)$.
As $-\Delta'$ admits an $R$-bounded $H^\infty$-calculus, the corresponding operator family is $R$-bounded.
Now we can proceed exactly as in the proof of Theorem~\ref{5.1}, applying  Theorem~\ref{thm:weak-main01}  and Remark~\ref{rem:weak-version-01}.
\end{proof}

\forget{
\begin{remark}
In  general, the solution $v$ will have regularity $H_p^2(\R^{n-1})$ in the space variable,
and therefore $u$ (which is determined by $(\mu^2+\Delta^2)u=0,\; \gamma_0u=v$) will only
have space regularity $H_p^{2+1/p}(\R^n_+)$.
In this connection, the question of the definition of the  term $\partial_\nu \Delta u$
 appears. It is known that $\partial_\nu\colon H_p^s(\R^n_+)\to B_{pp}^{s-1-1/p}(\R^{n-1})$ is well defined
 (and continuous) if only if $s>1+1/p$. However, we can use the additional information (from the first line of \eqref{5-4}) that $\Delta^2 u\in L^p(\R^n_+)$. If we define the space
 \[ H_{p,\Delta}^s(\R^n_+) := \{ u\in H_p^s(\R^n_+): \Delta u \in L^p(\R^n_+)\}\]
 with canonical norm, then the Neumann trace $\partial_\nu\colon H_{p,\Delta}^s(\R^n_+)\to B_{pp}^{s-1-1/p}(\R^{n-1})$ is well defined and continuous for all $s\in \R$. This is discussed in detail in, e.g., \cite[Lemma~4.8]{Denk-Ploss-Rau-Seiler23}, where additional references can be found. Applying
 this to the term $\partial_\nu\Delta u$, we see that all terms on the left-hand side of \eqref{5-4} are well defined.
\end{remark}
}

\subsection{A Kolmogorov–Petrovskii–Pisconov model with dynamics on the boundary}

As a last example, we study a field-road model from mathematical biology which was proposed in \cite{Berestycki-Roquejoffre-Rossi13} to model, e.g., the propagation of wolves in regions (fields) with men-made corridors (roads). This system consists of a Kolmogorov–Petrovskii–Pisconov equation in $\R^n_+$ (with $n=2$ for the application) and a dynamic boundary condition on the road $\R^{n-1}$. More precisely, it has the form
\begin{equation}
    \label{5-7}
    \begin{alignedat}{4}
          (\partial_t-d \Delta) u & = f (u)&\;& \text{ in } (0,\infty)\times \R^n_+,\\
  - \gamma_0 u+ (\partial_t - d'\Delta')v + k v   & = 0 && \text{ on } (0,\infty)\times\R^{n-1},\\
 d \partial_\nu   u +\gamma_0 u - k v & = 0 && \text{ on }(0,\infty)\times\R^{n-1}.
    \end{alignedat}
\end{equation}
Here, $d,d',k$ are positive parameters, and the nonlinearity $f(u)$ describes the reproduction process. Again, we write the linearized  system as an evolution equation of the form $(\partial_t-A)\binom uv = \binom fg$ (where in the application $g=0$) in the basic space $X:= L^p(\R^n_+)\times L^p(\R^{n-1})$. Now the operator $A$ is given by
\begin{equation}\label{5-8}
\begin{aligned}
A\binom uv & := \begin{pmatrix}
   d\Delta & 0 \\ \gamma_0 & d'\Delta' - k
\end{pmatrix}\binom uv,\\
D(A) & := \left\{ \binom uv\in X: A\binom uv \in X,\, d\partial_\nu u + \gamma_0 u -k v =0\right\}.
\end{aligned}
\end{equation}
To analyze the operator $A$, we consider the resolvent equation $(\mu^2-A)\binom uv = \binom fg$, where again we can reduce this to the case $f=0$. Applying partial Fourier transform, the first line of \eqref{5-7} turns into an ODE in $(0,\infty)$
of the form
\[ (\mu^2+d|\xi'|^2- d\partial_n^2)\hat u(\xi',x_n)=0\quad \text{ in }(0,\infty).\]
For the only stable solution, we get
\[ \hat u(\xi',x) = \exp\left(-{\sqrt{\frac{\mu^2}d+|\xi'|^2}}\;x_n\right)\hat u (\xi',0).\]
Inserting this into the boundary conditions yields
\[ \begin{pmatrix}
-1 & \mu^2+k+d'|\xi'|^2 \\
    \sqrt{d\mu^2+d^2|\xi'|^2}+1 & -k
\end{pmatrix}\binom{\hat u(\xi',0)}{\hat v(\xi')} = \binom{\hat g (\xi')}0.\]
Therefore $\mu^2\gamma_0 u=\op'[m_1(\cdot,\mu)] g$ and $\mu^2v = \op'[m_2(\cdot,\mu)] g$ with
\begin{equation}\label{5-9}
    \begin{aligned}
    m_1(\xi',\mu) & = \frac{ \mu^2 k}{(\mu^2+k+d'|\xi'|^2)(\sqrt{d\mu^2+d^2|\xi'|^2}+1)-k }\;,\\[0.2em]
    m_2(\xi',\mu) & =  \frac{ \mu^2(\sqrt{d\mu^2+d^2|\xi'|^2}+1 )  }{(\mu^2+k+d'|\xi'|^2)(\sqrt{d\mu^2+d^2|\xi'|^2}+1)-k }\;.
\end{aligned}
\end{equation}

As in Subsection~\ref{Sec5.2}, we will use the $H^\infty$-calculus for the operator $-\Delta'$. For this, the
following result is the key observation.

\begin{lemma}
    \label{5.4} Let $\theta\in (0,\pi/2)$, and let $z\in \Sigma_\epsilon$ with  $\epsilon>0$.
    Define $m_1(z,\mu)$ and $m_2(z,\mu)$ by \eqref{5-9} with $|\xi'|$ being replaced by $z$. Then, if $\epsilon$ is chosen small enough, the functions $m_1$ and $m_2$ are bounded and holomorphic for $(z,\mu)\in \Sigma_\epsilon\times \Sigma_\theta$.
\end{lemma}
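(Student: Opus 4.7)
The plan has three parts: verify (a) holomorphy of $\sigma:=\sqrt{d\mu^2+d^2z^2}$ (principal branch), (b) non-vanishing of the common denominator
\[
D:=(\mu^2+k+d'z^2)(\sigma+1)-k=(\mu^2+d'z^2+k)\,\sigma+(\mu^2+d'z^2),
\]
and (c) uniform bounds on $m_1,m_2$. For (a), write $\mu^2\in\Sigma_{2\theta}$ and $d^2z^2\in\Sigma_{2\epsilon}$; a direct argument (the sum of two complex numbers $r_je^{i\phi_j}$ lies in $(-\infty,0]$ only if $|\phi_1-\phi_2|\ge\pi$) shows that the choice $\epsilon<\tfrac{\pi}{2}-\theta$ guarantees $d\mu^2+d^2z^2\notin(-\infty,0]$ on $(\Sigma_\epsilon\times\Sigma_\theta)\setminus\{(0,0)\}$. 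Hence $\sigma$ is holomorphic there, with $\arg(d\mu^2+d^2z^2)$ staying in $(-\pi+\delta,\pi-\delta)$ for some $\delta=\delta(\theta,\epsilon)>0$; in particular $\Re\sigma\gtrsim|\sigma|\gtrsim|\mu|+|z|$.

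For (b), the same sectoral argument rules out $\mu^2+d'z^2=-k$ (so $\mu^2+d'z^2+k\ne 0$) and $\mu^2+d'z^2=0$ away from the origin. To rule out zeros of $D$ itself, I would analyze the arguments of the two summands $(\mu^2+d'z^2+k)\sigma$ and $\mu^2+d'z^2$, both of which lie in sectors of controlled opening (after possibly shrinking $\epsilon$ again), showing they cannot sum to zero; equivalently, combine the manifest positivity of $D$ on the real ray $(0,\infty)^2$ with a homotopy/connectedness argument on the connected open set $\Sigma_\epsilon\times\Sigma_\theta$. This is the main technical step, closely parallel to the corresponding argument in \cite[Lemma~5.5]{Denk-Ploss-Rau-Seiler23} for the Cahn--Hilliard denominator.

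For (c), I would use a scaling argument. Set $t:=(|\mu|^2+|z|^2)^{1/2}$ and $\mu=t\mu'$, $z=tz'$, so that $(\mu',z')$ lies in the compact set $K:=\{(\mu',z')\in\Sigma_\theta\times\Sigma_\epsilon:|\mu'|^2+|z'|^2=1\}$; then $\sigma=t\sigma'$ with $\sigma':=\sqrt{d(\mu')^2+d^2(z')^2}\neq 0$ on $K$, and
\[
D=t\bigl[t^2\sigma' B'+tB'+k\sigma'\bigr]=:tQ(t;\mu',z'),\qquad B':=(\mu')^2+d'(z')^2.
\]
Step (b), compactness of $K$, and the asymptotics $Q\sim k\sigma'$ as $t\to 0^+$ and $Q\sim t^2\sigma'B'$ as $t\to\infty$ (where $B'\neq 0$ on $K$ by the sectoral argument of (a)) together yield a uniform lower bound $|Q|\gtrsim |\sigma'|(1+t^2)$. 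Substituting into
\[
m_1=\frac{t(\mu')^2 k}{Q},\qquad m_2=\frac{t(\mu')^2(t\sigma'+1)}{Q}
\]
gives $|m_1|\lesssim t/(1+t^2)$ and $|m_2|\lesssim 1$, completing the proof. The only non-routine step is the non-vanishing of $D$ in (b); everything else is standard compactness and asymptotics.
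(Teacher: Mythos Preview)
Your outline for (a) and the scaling argument in (c) are sound; in (c) you take a slightly different route than the paper, which instead shows $m_j\to 0$ both as $|(z,\mu)|\to 0$ and as $|(z,\mu)|\to\infty$ and then invokes continuity together with (b) on the remaining bounded region. Both approaches work, and yours yields more explicit decay rates.

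The genuine gap is in (b). Your first suggestion, a sector analysis of the two summands $(\mu^2+d'z^2+k)\sigma$ and $\mu^2+d'z^2$, does not cover the full range $\theta\in(0,\pi/2)$: the first summand has argument in an interval of half-width roughly $3\theta$ and the second roughly $2\theta$, so once $5\theta>\pi$ (i.e.\ already for $\theta>\pi/5$) the sectors are wide enough that cancellation cannot be excluded on argument grounds alone. Your second suggestion, positivity of $D$ on the real ray combined with ``connectedness'' of $\Sigma_\epsilon\times\Sigma_\theta$, is not by itself a valid mechanism for excluding zeros of a complex-valued holomorphic function.

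What the paper does---and what the reference to \cite[Lemma~5.5]{Denk-Ploss-Rau-Seiler23} indeed parallels---is a two-step argument that exploits the \emph{product} form $f(z,\mu)=(\mu^2+k+d'z^2)(\sigma+1)$ rather than your sum decomposition. First, for \emph{real} $z>0$ and $\arg\mu\in(0,\theta)$, both factors have strictly positive argument (in $(0,2\theta)$ and $(0,\theta)$ respectively), so $\arg f\in(0,3\theta)$; hence $f$ is never a positive real number and in particular $f\ne k$, for every $\theta<\pi/2$. (For real $\mu$ one has $f>k$ directly; the case $\arg\mu<0$ is symmetric.) Second, having secured $D\ne 0$ on the entire real-$z$ slice $(0,\infty)\times\overline{\Sigma_\theta}$, one perturbs $z$ into $\Sigma_\epsilon$ by a genuine compactness argument: after your own large/small-$|(z,\mu)|$ reduction restricts attention to a bounded region, the function
\[
\alpha\longmapsto\min\big\{|f(re^{i\alpha},\mu)-k|:\,(r,\mu)\in[0,\infty)\times\overline{\Sigma_\theta},\;r^2+|\mu|^2\le R^2\big\}
\]
is continuous in $\alpha$ and positive at $\alpha=0$, hence positive for $|\alpha|<\epsilon$ sufficiently small. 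This compactness step, not mere connectedness, is what produces the $\epsilon$ in the statement and is the missing ingredient in your sketch.
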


\begin{proof}
    We only consider $m_1$, as the proof for $m_2$ can be done in the same way. Let $\epsilon\in (0,\pi/2-\theta)$. We first remark that, by
    homogeneity and compactness, for all $(z,\mu)\in (\overline{\Sigma_\epsilon}\times \overline{\Sigma_\theta})
    \setminus\{0\}$ we have
    \begin{equation}\label{5-10}
    c_1 |(z,\mu)| \le \big| \mu^2+z^2\big|^{1/2} \le C_1 |(z,\mu) |,\end{equation}
    where $|(z,\mu)| := := (|z|^2+|\mu|^2)^{1/2} $. From this we obtain
$      m_1(z,\mu)\to 0$ for $|(z,\mu)|\to\infty$. In particular, $m_1$ is bounded for $|(z,\mu)|\ge R$ with $R$ sufficiently large.

For $|(z,\mu)|\to 0$, we write the denominator of $m_1$ in the form
\[ m_1(z,\mu) = \frac{\mu^2k}{(\mu^2+d'z^2)(\sqrt{d\mu^2+d^2z^2} +1) + k \sqrt{d\mu^2+d^2z^2}}\,.\]
Again using \eqref{5-10}, we see that the absolute value of the denominator can be estimated from below by
\[ C \big( |(z,\mu)| - |(z,\mu)|^2 (|(z,\mu)| +1)\big) \ge C'|(z,\mu)|\]
for sufficiently small $|(z,\mu)|$. This yields $m_1(z,\mu)\to 0$ for $|(z,\mu)|\to 0$, and therefore $m_1$ can be extended to a continuous function on $\overline{\Sigma_\epsilon}\times \overline{\Sigma_\theta}$.

     By continuity and compactness, we know that $m_1(z,\mu)$ is bounded for $|(z,\mu)|\le R$ if we can
     show that the denominator in \eqref{5-9} does not vanish. We set
     \[ f(z,\mu) := (\mu^2+k+d'z^2)\big(\sqrt{d\mu^2+d^2z^2} +1\big)\]
     and have to show that $f(z,\mu)\not=k$. For this, we first assume that $z$ is real. As in this case $f(z,\bar \mu)=\overline{f(z,\mu)}$, we may assume $\arg(\mu)\in [0,\theta)$. If also $\mu$ is real, we
     obviously have $f(z,\mu)>k$. If $\arg(\mu)\in (0,\theta)$, we see that $\arg(\mu^2+k+d'z^2)\in (0,2\theta)$
     and $\arg(\sqrt{d\mu^2+d^2z^2}+1)\in (0,\theta)$. Therefore $\arg(f(z,\mu))\in (0, 3\theta)$
     which implies $f(z,\mu)\not\in (0,\infty)$. So we have seen that $f(z,\mu)\not=k$ for all
     $(z,\mu)\in (0,\infty)\times \Sigma_\theta$.

     For $\alpha\in (-\epsilon,\epsilon)$, we define
     \[ g(\alpha) := \min\Big\{ |f(r e^{i\alpha},\mu)-k|: (r,\mu)\in [0,\infty)\times \overline{\Sigma_\theta},\, r^2+|\mu|^2\le R^2\Big\}. \]
We have just seen that $g(0)>0$. As the function $g$ depends (as the minimum of a continuous function over a compact set) continuously on $\alpha$, there exists some $\epsilon'>0$ such that $g(\alpha)>0$ for all $\alpha\in (-\epsilon',\epsilon')$, which finishes the proof of the statement.
\end{proof}

\begin{theorem}
The operator $A$  related to the Kolmogorov–Petrovskii–Pisconov model and defined in \eqref{5-8} is $R$-sectorial of angle $\pi$ and therefore has maximal $L^q$-regularity for every $q\in (1,\infty)$.
\end{theorem}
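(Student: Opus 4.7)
The plan is to follow the template of the proofs of Theorem~\ref{5.1} and the Cahn--Hilliard case above, with Lemma~\ref{5.4} supplying the new analytic input. Fix $\theta'\in(0,\pi)$, set $\Sigma:=\Sigma_{\theta'/2}$, and for $\mu\in\Sigma$ consider the resolvent equation $(\mu^2-A)\binom{u}{v}=\binom{f}{g}$. As in Theorem~\ref{5.1}, I would first eliminate $f$ by setting $u=R_D(\mu)f+\wt u$, where $R_D(\mu)$ is the Dirichlet resolvent of $-d\Delta$ on $L^p(\R^n_+)$ (which is $R$-sectorial of angle $\pi$, with $\{R_D(\mu)\}$ uniformly $R$-bounded into $H^2_p(\R^n_+)$ via the identity $\Delta R_D(\mu)f=(f-\mu^2R_D(\mu)f)/d$ and elliptic regularity). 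The only residue is the Neumann trace $-d\partial_\nu R_D(\mu)f\in L^p(\R^{n-1})$, which is absorbed into the second boundary datum; it therefore suffices to prove $R$-boundedness of $\{\mu^2\wt R(\mu):\mu\in\Sigma\}$, where $\wt R(\mu)g:=R(\mu)\binom{0}{g}$.

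Partial Fourier transform in $x'$ forces $\hat u(\xi',x_n)=\exp(-\tau(\xi',\mu)x_n)\,\hat u(\xi',0)$ with $\tau(\xi',\mu)=\sqrt{\mu^2/d+|\xi'|^2}$; the sector $\Sigma$ keeps $\tau$ away from the branch cut. Substituting $\gamma_0 u=v_0$ and $d\partial_\nu u=d\tau(D',\mu)v_0$ into the two boundary conditions yields exactly the $2\times 2$ system preceding Lemma~\ref{5.4}, and inverting it gives
\begin{equation*}
\mu^2\gamma_0 u=m_1(D',\mu)g,\qquad \mu^2 v=m_2(D',\mu)g,
\end{equation*}
with $m_1,m_2$ as in \eqref{5-9}.

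The heart of the proof is to upgrade Lemma~\ref{5.4} to operator $R$-boundedness. Setting $\wt m_j(w,\mu):=m_j(\sqrt{w},\mu)$, the lemma gives that each $\wt m_j(\cdot,\mu)$ is bounded and holomorphic on $\Sigma_{2\epsilon}$, with $H^\infty$-bound uniform in $\mu\in\Sigma$. Since $-\Delta'$ admits an $R$-bounded $H^\infty$-calculus of angle $0$ on $L^p(\R^{n-1})$, the identification $m_j(D',\mu)=\wt m_j(-\Delta',\mu)$ implies
\begin{equation*}
\{m_j(D',\mu):\mu\in\Sigma\}\subset\scrL(L^p(\R^{n-1}))\quad\text{is $R$-bounded},\qquad j=1,2.
\end{equation*}
The case $j=2$ immediately controls the boundary component $\mu^2 v=m_2(D',\mu)g$.

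For the bulk component I would write $u=K_\mu\gamma_0 u$, where $K_\mu=\op(k)(\mu)$ is the Poisson operator with symbol-kernel $k(\xi',\mu,x_n)=\exp(-\tau(\xi',\mu)x_n)\in S^0_P(\R^{n-1}\times\Sigma)$, verified in complete analogy with the example of Section~\ref{subsec:example}. Then $\mu^2 u=K_\mu m_1(D',\mu)g$, and Theorem~\ref{thm:weak-main01} for $p\ge 2$ together with Remark~\ref{rem:weak-version-01} for $p<2$, applied with $\epsilon=1/p$, yields that $\{K_\mu:\mu\in\Sigma\}\subset\scrL(L^p(\R^{n-1}),L^p(\R^n_+))$ is already $R$-bounded; composition with the $R$-bounded family $\{m_1(D',\mu)\}$ closes the argument and shows that $\{\mu^2 R(\mu):\mu\in\Sigma\}\subset\scrL(X)$ is $R$-bounded, so $A$ is $R$-sectorial of angle $\pi$ and maximal $L^q$-regularity follows. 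The principal obstacle is the identification of $m_j(D',\mu)$ as a uniformly bounded $H^\infty$-calculus expression in $-\Delta'$, which is precisely what Lemma~\ref{5.4} was designed to deliver; the reduction step on $f$ and the Poisson bound on $K_\mu$ are then carried out as in Theorem~\ref{5.1}.
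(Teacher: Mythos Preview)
Your outline coincides with the paper's own (very terse) proof: reduce to $f=0$, apply Lemma~\ref{5.4} together with the $R$-bounded $H^\infty$-calculus of $-\Delta'$ to obtain $R$-boundedness of the boundary multipliers $m_j(D',\mu)$, and then compose with the Poisson operator of the Dirichlet problem $(\mu^2-d\Delta,\gamma_0)$.

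One step, however, is not justified as written. You claim $k(\xi',\mu,x_n)=\exp(-\tau(\xi',\mu)x_n)\in S^0_P(\R^{n-1}\times\Sigma)$ ``in complete analogy with the example of Section~\ref{subsec:example}''. That example treats $\Delta-1$; the shift by $1$ is precisely what keeps $\tau/\spk{\xi',\mu}$ uniformly bounded below. Here $\tau=\sqrt{\mu^2/d+|\xi'|^2}$ carries no such shift, and as $(\xi',\mu)\to 0$ in $\R^{n-1}\times\Sigma$ one has $\tau\to 0$ while $\spk{\xi',\mu}\to 1$; consequently $\wt k(\xi',\mu,t)=\exp\big(-\tau\spk{\xi',\mu}^{-1}t\big)$ is \emph{not} uniformly in $\scrS(\rz_+)$ (nor in $\scrS^0(\rz_+)$), so $k\notin S^0_P$ and Theorems~\ref{thm:weak-main01} and \ref{rem:weak-version-01} do not apply directly. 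Indeed $\{K_\mu\}$ is not even uniformly bounded in $\scrL(L^p(\R^{n-1}),L^p(\R^n_+))$ as $\mu\to 0$: a scaling argument shows that the harmonic Poisson extension $K_0$ is unbounded between these spaces. The standard remedy is to split $\Sigma$ into $\{|\mu|\ge 1\}$, where $\tau/\spk{\xi',\mu}$ is bounded below and your argument runs verbatim, and $\{|\mu|\le 1\}$, handled by holomorphy/norm-continuity of the resolvent on a bounded parameter set (or by absorbing $m_1$ into the symbol-kernel and checking the combined object). A secondary remark: in your reduction via the Dirichlet resolvent the residue $-d\,\partial_\nu R_D(\mu)f$ lands on the right-hand side of the \emph{domain} condition $d\partial_\nu u+\gamma_0 u-kv=0$, not in the slot of $g$; one must therefore invert the $2\times2$ boundary matrix for both columns, which requires an additional (but entirely analogous) application of Lemma~\ref{5.4}.
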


\begin{proof}
The proof follows the same steps as in the above examples. We reduce to the case $f=0$ and write the solution operator to $(\mu^2-A)\binom uv = \binom 0g$ in the form $\gamma_0 u = R_1(\mu)g$ and $v = R_2(\mu)g$. By Lemma~\ref{5.4} and the fact that $-\Delta'$ has an $R$-bounded $H^\infty$-calculus, we know that $\{\mu^2 R_j(\mu): \mu\in \Sigma_\theta\}$ is $R$-bounded for all $\theta<\frac\pi2$. Now we can use the $R$-boundedness of the Poisson operator related to the boundary value problem  $(\mu^2-d\Delta,\gamma_0)$.
\end{proof}

\bibliographystyle{alpha}

\end{document}